\newcolumntype{d}[1]{D{.}{.}{#1}}
\newtheorem{Def}{Definition}[section]
\newtheorem{Thm}{Theorem}[section]
\newtheorem{Ex}{Example}[section]
\newtheorem{Remark}{Remark}[section]
\begin{document}

\pagenumbering{arabic}
\baselineskip=1.3pc

\vspace*{0.5in}

\begin{center}

{{\bf Structure-preserving nodal DG method for Euler equations with gravity II: general equilibrium states}}

\end{center}

\vspace{.03in}

\centerline{
Yuchang Liu\footnote{School of Mathematical Sciences,
         University of Science and Technology of China,
         Hefei, Anhui 230026, P.R. China.  
         E-mail: lissandra@mail.ustc.edu.cn.},
Wei Guo\footnote{Department of Mathematics and Statistics, 
	Texas Tech University, Lubbock, TX, 70409, USA. 
	E-mail: weimath.guo@ttu.edu.},
Yan Jiang\footnote{School of Mathematical Sciences,
         University of Science and Technology of China, Hefei,
         Anhui 230026, P.R. China.  
         E-mail: jiangy@ustc.edu.cn.
         Research supported by NSFC grant 12271499. },
and Mengping Zhang\footnote{School of Mathematical Sciences,
         University of Science and Technology of China, Hefei,
         Anhui 230026, P.R. China.  
         E-mail: mpzhang@ustc.edu.cn.}
}

\vspace{.1in}

\noindent
{\bf Abstract:} We develop an entropy-stable nodal discontinuous Galerkin (DG) scheme for the Euler equations with gravity, which is also well-balanced with respect to general equilibrium solutions, including both hydrostatic and moving equilibria. The core of our approach lies in a novel treatment of the gravitational source term, combining entropy-conservative numerical fluxes with a linear entropy correction. In addition, the proposed formulation is carefully designed to ensure compatibility with a positivity-preserving limiter. We provide a rigorous theoretical analysis to establish the accuracy and structure-preserving properties of the proposed scheme.  
Extensive numerical experiments confirm the robustness and efficiency of the scheme.

\vspace{.1in}

\noindent
\textbf{Key Words:} balance laws,
discontinuous Galerkin method, 
 well-balanced, entropy stability, positivity-preserving, moving equilibrium.


\section{Introduction}

In this paper, we continue our investigation into the development of accurate, theoretically justified, and structure-preserving numerical schemes for the Euler equations with gravity, building on our previous work \cite{liu2025structure}. These equations form a nonlinear system of balance laws with gravitational source terms, and arise naturally in various physical applications, particularly in astrophysics and atmospheric science. It is well recognized that preserving key properties of the underlying balance law at the discrete level is critical for avoiding non-physical behavior and simulation failure \cite{balsara2004comparison, chen2017entropy, zhang2010positivity}. In recent decades, substantial research efforts have been devoted to the development of structure-preserving methods, which continue to pose fundamental challenges in algorithm design \cite{slotnick2014cfd}. In \cite{liu2025structure}, we employed the discontinuous Galerkin (DG) method \cite{reed1973triangular, cockburn1989tvb, cockburn1989tvb3, cockburn1990runge2, cockburn1991runge} as a building block, leveraging its distinctive properties, such as high-order accuracy, local conservation,  and flexibility to design new and incorporate existing well-established structure preserving techniques.
In particular, we focused on three key properties, namely well-balanceness (WB), entropy stability (ES), and the positivity preserving (PP) property for the Euler equations with gravity. 
By carefully modifying the source term to match the form of entropy conservative fluxes \cite{chen2017entropy, liu2018entropy}, and incorporating the PP limiter \cite{zhang2010positivity, zhang2011positivity}, we developed a novel nodal DG scheme that is WB in \emph{capturing arbitrary hydrostatic equilibrium}, and ES and PP for general solutions. This is the first DG scheme in the literature that can simultaneously achieve all three properties.

Meanwhile, the WB property of the method in \cite{liu2025structure}, as with many other existing WB schemes \cite{xing2013high, chandrashekar2015second, li2016well, chandrashekar2017well, wu2021uniformly, du2024well}, is restricted to hydrostatic equilibrium solutions. To date, there have been relatively few studies on the WB property for capturing more general moving equilibrium states, partly due to the additional complexity arising from nonlinear interactions introduced by nonzero velocity fields. In \cite{gomez2021high}, a control-based approach is introduced for general balance laws, where the nonlinear reconstruction problem is formulated as a control problem. In \cite{grosheintz2020well}, a WB finite volume method was proposed by using a local steady state reconstruction to preserve numerical moving equilibrium states which works in Cartesian, cylindrical, and spherical coordinates. In \cite{berthon2024entropy}, Berthon \emph{et al.} introduced a structure preserving finite volume scheme in one dimension 
(1D), which is WB for general equilibrium states, PP, and ES for the first-order version. This is achieved by carefully designing an approximate Riemann solver at interfaces. For DG discretizations, Zhang \textit{et al.} \cite{zhang2024equilibrium} proposed a WB DG method that preserves general equilibrium states by updating equilibrium variables instead of conservative variables. In \cite{xu2024high}, the DG method exploits the interpolatory property of Gauss–Lobatto points to carefully design the discretization of the source term. Both methods achieve the WB property for general equilibrium states of the shallow water equations in both 1D and two dimensions (2D), as well as for general equilibrium states of the Euler equations with gravity in 1D.

In this work, motivated by \cite{xu2024high}, we extend our previous scheme in \cite{liu2025structure} to simultaneously achieve ES and WB properties for arbitrary equilibrium state solutions. In particular, the form of the equilibrium solution needs to be specified \textit{a priori}, which is typically required in the construction of WB schemes. The proposed method relies on two key ingredients. First, following the approach in \cite{liu2025structure}, by adjusting the source term discretization to exactly match the form of entropy conservative fluxes, we obtain the WB property. Note that, for hydrostatic equilibrium, this is shown to be sufficient to ensure entropy stability, as the entropy variable is orthogonal to the source term in the zero-velocity setting, see \cite{liu2025structure}. However, the orthogonality no longer holds for moving equilibrium states. To address this, inspired by \cite{abgrall2018general, gaburro2023high, liu2024non}, we incorporate a conservative, high order entropy correction term into the DG formulation to control the additional entropy produced by the source term, without destroying the WB property. Note that both techniques are essential: if we only employ the entropy conservative fluxes, then the discretization of the source term may increase the entropy; if only the entropy correction term is added, then the WB property cannot be guaranteed. Furthermore, the proposed modification is shown to be compatible with the PP limiter \cite{zhang2011positivity}. A specially designed set of numerical experiments verifies that omitting any single property would lead to computational breakdown or violation of key physical laws, highlighting the effectiveness of the proposed structure-preserving method. We also provide a rigorous theoretical analysis to establish all these properties.

The rest of this paper is organized as follows. In Section \ref{sec2}, we introduce the Euler equations with gravity, together with their equilibrium solutions and entropy condition. In Section \ref{sec3}, we present the structure-preserving nodal DG scheme in 1D, followed by its extension to the 2D case in Section~\ref{sec4}. Section~\ref{sec5} provides a variety of numerical examples to demonstrate the performance of the proposed scheme. Concluding remarks are given in Section~\ref{sec6}.

\section{Euler equations with gravity}\label{sec2}

\subsection{Governing equations}

In general $d$-dimensional space, the Euler equations with gravity can be written as a system of nonlinear balance law
\begin{equation}\label{eq:Euler-d}
\mathbf{U}_t+\nabla \cdot \mathbf{F}\left( \mathbf{U} \right) =\mathbf{S}\left( \mathbf{U},\mathbf x \right),\quad (\mathbf x,t)\in \mathbb R^d\times [0,+\infty),
\end{equation}
where
\begin{equation}
\mathbf{U}=\left[ \begin{array}{c}
	\rho\\
	\mathbf{m}\\
	\mathcal{E}\\
\end{array} \right] ,\quad \mathbf{F}\left( \mathbf{U} \right) =\left[ \begin{array}{c}
	\mathbf m\\
	\rho \mathbf{u}\otimes \mathbf{u}+pI_d\\
	\mathbf{u}\left( \mathcal{E} +p \right)\\
\end{array} \right] ,\quad \mathbf{S}\left( \mathbf{U},\mathbf x \right) =\left[ \begin{array}{c}
	0\\
	-\rho \nabla \phi\\
	- \mathbf{m}\cdot \nabla \phi\\
\end{array} \right].
\end{equation}
The variables are defined as follows: $\rho$ denotes the mass density, $\mathbf m=\rho\mathbf u$ represents the momentum, $\mathcal E$ characterizes the total energy, $I_d$ denotes the identity matrix, and $p$ represents the thermodynamic pressure. The source term $\mathbf S$ encapsulates the gravitational field effects, where $\phi=\phi(\mathbf x)$ denotes the time-invariant gravitational potential. The system closure is achieved through the implementation of the ideal gas equation of state
\begin{equation}\label{eq:EOS}
\mathcal{E} =\frac{p}{\gamma -1}+\frac{1}{2}\rho \left\|\mathbf u\right\|^2,\quad \gamma =1.4.
\end{equation}
A necessary condition for the existence of physically admissible solutions to \eqref{eq:Euler-d} is that the solution must lie in the admissible state space defined by 
$$\mathbf U(x,t)\in\mathscr G,\quad\forall (x,t)\in\mathbb R^d\times [0,+\infty),$$
where the \textit{admissible set}
$$ \mathscr G=\left\{ (\rho,\mathbf m,\mathcal E):\ \rho > 0\quad\mathrm{and}\quad p(\rho,\mathbf m,\mathcal E)>0 \right\}. $$
It is known that the set $\mathscr G$ is convex when $\rho > 0$, a property that is fundamental to the analysis of positivity preservation.

\subsection{Steady state solutions}

In the presence of the gravitational potential $\phi$, the system \eqref{eq:Euler-d} admits a class of time-independent solutions, designated as \textit{equilibrium state solutions}, which satisfies
\begin{equation}\label{eq:eqbm}\nabla\cdot\mathbf F(\mathbf U)=\mathbf S(\mathbf U,\mathbf x). \end{equation}
In previous investigations \cite{xing2013high, chandrashekar2017well, li2016well, du2024well}, researchers have predominantly focused on \textit{hydrostatic equilibrium states} characterized by vanishing velocities. For such equilibrium states, \eqref{eq:eqbm} reduces to
\begin{equation}\label{eq:hydroeqbm}
\rho = \rho(\mathbf{x}), \quad \mathbf{u} = \mathbf{0}, \quad \nabla p = -\rho \nabla \phi.
\end{equation}
Hydrostatic equilibrium solutions fall into two primary categories: isothermal and isentropic states. In isothermal equilibrium, temperature remains constant at $T_0$, yielding the equation of state 
$p/\rho=RT_0$, where $R$ denotes the gas constant. The corresponding solution is given by \begin{equation}\label{eq:isT}
\rho = \rho_0 \exp \left( -\frac{\phi}{RT_0} \right), \quad \mathbf{u} = \mathbf{0}, \quad p = \rho_0RT_0 \exp \left( -\frac{\phi}{RT_0} \right).
\end{equation}
The isentropic equilibrium state assumes $p\rho^{-\gamma}=K_0$, and the solution is given by
\begin{equation}\label{eq:isE}
\rho = \left( \frac{\gamma - 1}{K_0 \gamma} \left( C - \phi \right) \right)^{\frac{1}{\gamma - 1}}, \quad \mathbf{u} = \mathbf{0}, \quad p = K_0 \rho^\gamma.
\end{equation}
Here, $\rho_0,R,T_0,K_0,C_0$ are constants.  
Note that they can both be regarded as more general polytropic equilibrium states, which  assumes the relation $p\rho^{-\nu}=K_0$, where $\nu$ and $K_0$ are constants. When $\nu = 1$, it recovers the isothermal equilibrium; when $\nu = \gamma$, it recovers the isentropic equilibrium.

A more nontrivial case arises in the form of equilibrium states with non-vanishing velocities $\mathbf u\ne\mathbf 0$, termed \textit{moving equilibrium states}, which have received relatively limited attention in the literature. In the 1D case, it is shown that 1D moving equilibrium state solution $(\rho,m,\mathcal E)$ is necessarily isentropic \cite{berthon2024entropy}. Specifically,
\begin{equation}\label{eq:movingeqbm}
m=m_0,\quad H+\phi=\frac{\mathcal E+p}{\rho}+\phi =H_0,\quad s=\ln \left( p\rho ^{-\gamma} \right) =s_0,
\end{equation}
where $m_0,\ H_0,\ s_0$ are constants. Based on this property, in \cite{berthon2024entropy}, the authors carefully designed a WB finite volume scheme for 1D moving equilibrium states. However, in higher dimensions, the relation \eqref{eq:movingeqbm} is no longer globally valid, but rather holds only along streamlines. Consequently, the development of novel methodologies for multidimensional moving equilibrium states presents nontrivial challenges.

\subsection{Entropy Condition}

For a $d$-dimensional system of conservation laws:
\begin{equation}\label{eq:hcl}
\mathbf{U}_t + \nabla \cdot \mathbf{F}(\mathbf{U}) = \mathbf{0},\quad \mathbf{F} = (\mathbf{F}_1, \dots, \mathbf{F}_d),
\end{equation}
a convex function $\mathcal{U}(\mathbf{U})$ is designated as an entropy function for the system \eqref{eq:hcl} if there exists an associated entropy flux vector $\mathcal{F} = (\mathcal{F}_1, \dots, \mathcal{F}_d)$ satisfying
\begin{equation}\label{eq:entropy}
\mathcal{F}_i'(\mathbf{U}) = \mathcal{U}'(\mathbf{U}) \,\mathbf{F}_i'(\mathbf{U}), \quad i = 1, \dots, d.
\end{equation}
Here, $\mathcal F_i'(\mathbf U)$ and $\mathcal U'(\mathbf U)$ are viewed as row vectors.  The pair of functions $(\mathcal{U}, \mathcal{F})$ constitutes an \textit{entropy pair}, with the associated \textit{entropy variable} defined as $\mathbf{V} = \mathcal{U}'(\mathbf{U})^T$. For a system of conservation laws \eqref{eq:hcl} admitting an entropy pair, left-multiplying $\mathbf{V}(\mathbf{U})^T$ yields an additional relation
\begin{equation}\label{eq:EC}
\mathcal{U}(\mathbf{U})_t + \nabla \cdot \mathcal{F}(\mathbf{U}) = 0
\end{equation}
for smooth solutions. For non-smooth solutions, the above equation is replaced by an inequality:
\begin{equation}\label{eq:ES}
\mathcal{U}(\mathbf{U})_t + \nabla \cdot \mathcal{F}(\mathbf{U}) \le 0,
\end{equation}
which holds in the weak sense and is known as the \textit{entropy condition}. A strictly convex function $\mathcal{U}$ constitutes an entropy function if and only if the Jacobian matrix $\partial \mathbf{U} / \partial \mathbf{V}$ is symmetric positive-definite and the matrices $\partial \mathbf{F}_i(\mathbf{U}(\mathbf{V})) / \partial \mathbf{V}$ are symmetric for all $i$. 
When these conditions are met, it is possible to define scalar functions $\varphi(\mathbf{V})$ and $\psi_i(\mathbf{V})$, with continuous second derivatives, termed the \textit{potential function} and \textit{potential flux} respectively, satisfying the specified relations
\begin{equation}\label{eq:psi}
\mathbf{U}(\mathbf{V})^T = \frac{\partial \varphi}{\partial \mathbf{V}}, \quad \mathbf{F}_i(\mathbf{V})^T = \frac{\partial \psi_i}{\partial \mathbf{V}}.
\end{equation}
For the homogeneous Euler equations, i.e. \eqref{eq:Euler-d} without the source term $\mathbf{S}(\mathbf{U}, \mathbf{x})$, Harten \cite{HARTEN1983151} established the existence of a family of entropy pairs parameterized by the specific entropy $s = \ln(p \rho^{-\gamma})$, which satisfy the symmetric conditions. 

However, to symmetrize the viscous terms in the compressible Navier-Stokes equations with thermal conductivity \cite{HUGHES1986223}, there exists a unique entropy pair:
\begin{equation}\label{eq:entropy_pair}
    \mathcal{U} = -\frac{\rho s}{\gamma - 1}, \quad \mathcal{F} = -\frac{\rho s}{\gamma - 1}\mathbf u.
\end{equation}
The corresponding entropy variable is given by
\begin{equation}\label{eq:entropy_variable}
\mathbf{V} = \mathcal{U}'(\mathbf{U})^T = \left[ \begin{array}{c}
    \dfrac{\gamma - s}{\gamma - 1} - \dfrac{\rho \|\mathbf{u}\|^2}{2p} \\
    {\rho \mathbf{u}}/{p} \\
    -{\rho}/{p} \\
\end{array} \right],    
\end{equation}
with potential functions $\varphi = \rho$ and $\psi_i = \rho u_i$. Direct verification shows that the Euler system with gravitational terms \eqref{eq:Euler-d} satisfies the entropy condition \eqref{eq:ES} when employing the entropy pair \eqref{eq:entropy_pair} and entropy variable \eqref{eq:entropy_variable} \cite{desveaux2016well}, since
$$
\mathbf{V}(\mathbf{U})^T \mathbf{S}(\mathbf{U}, \mathbf{x}) = -\rho \nabla \phi \cdot \frac{\rho \mathbf{u}}{p} - \left( -\rho \mathbf{u} \cdot \nabla \phi \right) \frac{\rho}{p} = 0.
$$
Thus, in the following, we will consider the entropy pair \eqref{eq:entropy_pair} and entropy variable \eqref{eq:entropy_variable} for the Euler equations with gravity \eqref{eq:Euler-d}.

\section{Structure preserving nodal DG method in one dimension}
\label{sec3}

We begin from the 1D case, and the Euler system \eqref{eq:Euler-d} with gravity can be written as:
\begin{equation}\label{eq:Euler1D}
\left[ \begin{array}{c}
    \rho \\
    m \\
    \mathcal{E} \\
\end{array} \right]_t + \left[ \begin{array}{c}
    m \\
    \rho u^2 + p \\
    u (\mathcal{E} + p) \\
\end{array} \right]_x = \left[ \begin{array}{c}
    0 \\
    -\rho \phi_x \\
    -m \phi_x \\
\end{array} \right],
\end{equation}
where $m = \rho u$ denotes the momentum density. The system is denoted by
\begin{equation}\label{eq:EulerG-1D2} 
\mathbf{U}_t + \mathbf{F}(\mathbf{U})_x = \mathbf{S}(\mathbf{U}, x). 
\end{equation}

We divide the 1D spatial domain $\Omega=[a,b]$ into $N$ cells $\mathcal K=\{K_{i}=[x_{i-1/2},x_{i+1/2}], i=1, \cdots, N\}$ with $x_{1/2}=a$, $x_{N+1/2}=b$. Without loss of generality, we assume a uniform grid with $\Delta x = \Delta x_i=x_{i+1/2}-x_{i-1/2}$ and denote the cell center $x_i=\frac{1}{2} (x_{i-1/2} + x_{i+1/2})$.

Firstly, we introduce the original nodal DG scheme for \eqref{eq:Euler1D}. Consider the reference element \(I = [-1, 1]\) associated with the Gauss–Lobatto quadrature points
$$
-1 = X_0 < X_1 < \cdots < X_k = 1,
$$
and the corresponding quadrature weights \(\{\omega_l\}_{l=0}^{k}\).
We define the differentiation matrix \(D\) with elements $D_{jl} = L_l'(X_j)$, where $L_l$ represents the $l$-th Lagrange basis function that satisfies the condition $L_l(X_j)=\delta_{lj}$. The mass matrix \(M\) and stiffness matrix \(S\) are expressed as
$$M = \text{diag}\{\omega_0, \omega_1, \dots, \omega_k\},\quad S= MD.$$
Let the boundary matrix \(B\) be defined as 
$$
B = \mathrm{diag}\{-1, 0, 0, \dots, 0, 0, 1\} =: \mathrm{diag}\{\tau_0, \dots, \tau_k\},
$$
then the \textit{summation-by-parts} (SBP) property holds \cite{chen2017entropy}:
\begin{equation}\label{eq:SBP1} 
S+S^T=B. 
\end{equation}
Moreover,
\begin{equation}\label{eq:SBP2}
  \sum_{l=0}^{k} D_{jl} = \sum_{l=0}^{k} S_{jl} = 0, \quad \sum_{l=0}^{k} S_{lj} = \tau_j,\quad \forall\, 0\le j\le k.  
\end{equation}
We consider the finite element space to be the collection of piecewise polynomial functions having degree no greater than $k$:
$$
V_h^k = \left\{ w(x): w(x) |_{K_i} \in P^k(K_i), \quad \forall K_i \in \mathcal{K} \right\},
$$
then the solution $\mathbf U_h\in\mathbf V_h^k$ can be represented by
\begin{align}
\mathbf{U}_h(x)|_{K_i} = \sum_{i_i=0}^{k} \mathbf{U}_h\left(x_{i} + \frac{\Delta x}{2} X\right) L_{i_1} \left( \frac{x-x_i}{\Delta x/2} \right)
\end{align}
on cell $K_i$, where $L_{i_1}$ denotes the $i_1$-th Lagrange interpolation polynomial on $K_0=[-1,1]$.  Utilizing the matrices established earlier, we can formulate the DG scheme using a compact matrix-vector representation based on nodal variables. We employ the following simplified notations:
\begin{align}
 x_i(X) = x_{i} + \frac{\Delta x}{2} X,\quad
 \mathbf{U}_{i_1}^{i} = \mathbf{U}_h(x_i(X_{i_1})), \quad \mathbf{F}_{i_1}^{i} = \mathbf{F}(\mathbf{U}_{i_1}^{i}),
\end{align}
\begin{equation}
\label{eq:Fstar}
\mathbf{F}_{i_1}^{*,i} = \begin{cases}
\hat{\mathbf F}(\mathbf U_k^{i-1},\mathbf U_0^{i})=:\hat{\mathbf{F}}_{i-1/2}, & i_1 = 0, \\
0, & 0 < i_1 < k, \\
\hat{\mathbf F}(\mathbf U_k^i,\mathbf U_0^{i+1})=:\hat{\mathbf{F}}_{i+1/2}, & i_1 = k.
\end{cases}
\end{equation}
Here, $\hat{\mathbf{F}}_{i+1/2}$ represents the numerical flux at the cell interface. In this paper, we use the Lax-Friedrichs flux:
\begin{equation}\label{eq:LF}
\hat{\mathbf{F}}^{LF}(\mathbf{U}_L, \mathbf{U}_R) = \frac{1}{2} (\mathbf{F}(\mathbf{U}_R) + \mathbf{F}(\mathbf{U}_L)) - \frac{\alpha}{2} (\mathbf{U}_R - \mathbf{U}_L).
\end{equation}
The parameter $\alpha$ corresponds to an estimated maximum local wave propagation speed, with its calculation to be specified in following discussions. 
The original nodal DG strong-formulation for \eqref{eq:EulerG-1D2} is given by \cite{chen2017entropy}
\begin{equation}\label{eq:nodal-old}
\frac{\mathrm{d}\mathbf{U}_{i_1}^{i}}{\mathrm{d}t} + \frac{2}{\Delta x}\sum_{l=0}^k D_{i_1, l} \mathbf{F}_l^i + \frac{2}{\Delta x}\frac{\tau_{i_1}}{\omega_{i_1}} \left( \mathbf{F}_{i_1}^{*,i} - \mathbf{F}_{i_1}^{i} \right) = \mathbf{S} \left( \mathbf{U}_{i_1}^{i}, x_i(X_{i_1}) \right).
\end{equation}

However, it is known that the nodal DG scheme above may not inherently preserve critical physical structures of interest. To address this limitation, we propose a modified DG framework that rigorously maintains the following fundamental properties simultaneously:

\begin{itemize}
\item[1.] \textbf{Well-balanced (WB) property}: 
If the initial condition is a steady-state solution  $\mathbf{U}^e(x) = (\rho^e(x), m^e(x), \mathcal{E}^e(x))^T$, the numerical solution remains \(\mathbf{U}_h = \mathbf{U}_h^e\). Here $\mathbf{U}_h^e$ is the interpolating polynomial of $\mathbf{U}^e$ at Gauss-Lobatto points. 
In particular, we concern on both the hydrostatic equilibrium states \eqref{eq:hydroeqbm} and moving equilibrium state \eqref{eq:movingeqbm}.

\item[2.] \textbf{Entropy-stable (ES) property}: Under the assumption of periodic or compactly supported boundary conditions, the semi-discrete scheme satisfies
$$
\frac{\mathrm{d} }{\mathrm{d}t} \left( \sum_{i=1}^N \sum_{i_1=0}^k \frac{\Delta x}{2} \omega_{i_1} 
 \mathcal{U}(\mathbf{U}_{i_1}^{i}) \right)  \le 0.
$$

\item[3.] \textbf{Positivity-preserving (PP) property}: If the numerical solution at time $t^n$ satisfies \(\mathbf{U}_{i_1}^{i, n} \in \mathscr{G}\), then the updated solution at \(t^{n+1}\) satisfies
$$
\mathbf{U}_{i_1}^{i, n+1} \in \mathscr{G}\quad 
\text{for any \(i\) and \(i_1\)}.
$$

\end{itemize}

\subsection{Proposed scheme}
Before introducing the proposed scheme, we define the entropy conservative flux $\mathbf F^S$ and the entropy stable flux $\hat{\mathbf F}$, respectively,  which play a key role in the algorithm development. 

\begin{Def}[Entropy conservative flux]
A consistent, symmetric two-point numerical flux $\mathbf F^S(\mathbf U_L, \mathbf U_R)$ is said to be entropy conservative, if for the given entropy function $\mathcal U$,
\begin{equation}\label{eq:ESflux}
\left( \mathbf{V}_R - \mathbf{V}_L \right) ^T \mathbf{F}^S \left( \mathbf{U}_L, \mathbf{U}_R \right) = \left( \psi_R - \psi_L \right).
\end{equation}
\end{Def}

\begin{Def}[Entropy stable flux]
A consistent two-point numerical flux $\hat{\mathbf F}(\mathbf U_L, \mathbf U_R)$ is said to be entropy stable, if for the given entropy function $\mathcal U$,
$$
\left( \mathbf{V}_R - \mathbf{V}_L \right) ^T \hat{\mathbf{F}} \left( \mathbf{U}_L, \mathbf{U}_R \right) \le \left( \psi_R - \psi_L \right).
$$
\end{Def}

For instance, \cite{chandrashekar2013kinetic} proposed the following entropy conservative flux for the Euler equations without gravity,
$$
\begin{aligned}
F_{1}^{S} &= \hat{\rho} \bar{u}, \\
F_{2}^{S} &= \frac{\bar{\rho}}{2\bar{\beta}} + \bar{u} F_{1}^{S}, \\
F_{3}^{S} &= \left( \frac{1}{2(\gamma - 1)\hat\beta} - \frac{1}{2} \overline{u^2} \right) F_1^S + \bar{u} F^2_S,
\end{aligned}
$$
where $\beta = \rho / 2p$, and 
$$
\overline{\alpha} = \frac{\alpha_l+\alpha_r}{2}, \quad \hat{\alpha} = \frac{\alpha_r - \alpha_l}{\ln \alpha_r - \ln \alpha_l}.
$$
In the context of entropy stable numerical fluxes, \cite{toro2013riemann} proposes utilizing the two-rarefaction approximation approach. Subsequently, \cite{guermond2016fast} proved that this two-rarefaction approximated wave speed methodology ensures entropy stability for the Euler system with $1 < \gamma \le 5/3$ when implemented the Lax-Friedrichs flux. Based on their similarity on the entropy property of Euler equations without or with gravity, we can employ those entropy conservative/stable fluxes there.  Moreover, to ensure entropy stability, we set the parameter $\alpha$ in the Lax-Friedrichs flux \eqref{eq:LF} as
$$
\alpha = \max \left\{ \left| u_L \right| + c_L, \left| u_R \right| + c_R, \alpha^{RRF}(\mathbf U_L, \mathbf U_R) \right\},
$$
where $c = \sqrt{\gamma p / \rho}$, and $\alpha^{RRF}$ is determined using the two-rarefaction approximation technique \cite{chen2017entropy}. \\

Next, we will give the proposed scheme. 
Note that for a given steady state $\mathbf U^e$ that satisfies \eqref{eq:eqbm}, we have
$\mathbf F(\mathbf U^e)_x=\mathbf S(\mathbf U^e,x).$ 
Following a similar technique in \cite{du2024well, xu2024high}, we can rewrite \eqref{eq:EulerG-1D2} as
\begin{equation}\label{eq:EulerG-1Dmod} 
\mathbf U_t+\mathbf F(\mathbf U)_x=\mathbf S(\mathbf U,x)+\mathbf F(\mathbf U^e)_x-\mathbf S(\mathbf U^e,x). 
\end{equation}
In this work, we design the scheme based on the above equation. Specially, to guarantee the WB property, the discretization for the source term should precisely match the flux term in the equilibrium state solution. 
Numerous previous works have utilized this concept to design numerical schemes. However, they are typically only well-balanced without satisfying entropy conditions, e.g. \cite{li2016well, xu2024high}. In our previous work on the structure-preserving scheme \cite{liu2025structure}, it only preserves the hydrostatic equilibrium and can not be generalized to arbitrary equilibrium states.

To address these issues, the novelly proposed nodal DG scheme is given by the ``strong" formulation: 
\begin{equation}\label{eq:scheme1D}
\frac{\mathrm{d}\mathbf{U}_{i_1}^{i}}{\mathrm{d}t}+\frac{2}{\Delta x}\sum_{l=0}^k{2D_{i_1,l}\mathbf{F}^S\left( \mathbf{U}_{i_1}^{i},\mathbf{U}_{l}^{i} \right)}+\frac{2}{\Delta x}\frac{\tau _{i_1}}{\omega _{i_1}}\left( \mathbf{F}_{i_1}^{*,i}-\mathbf{F}_{i_1}^{i} \right) =\mathbf{S}_{i_1}^{i}+(\mathbf{S}^0)^{i}_{i_1}-(\mathbf{S}^{corr})_{i_1}^i,
\end{equation}
where $\mathbf F_{i_1}^{*,i}$ is defined in \eqref{eq:Fstar}, and
$$
\mathbf{S}_{i_1}^{0,i}=\frac{2}{\Delta x}\sum_{l=0}^k{2D_{i_1,l}\mathbf{F}^S\left( \mathbf{U}_{i_1}^{e,i},\mathbf{U}_{l}^{e,i} \right)}-\mathbf S^{e,i}_{i_1},$$
$$\mathbf{S}_{i_1}^{e,i}=\left[ \begin{array}{c}
	0\\
	-\rho _{i_1}^{e,i} \phi _{x,i_1}^{i}\\
	-m_{i_1}^{e,i} \phi _{x,i_1}^{i}\\
\end{array} \right],\quad \mathbf{S}_{i_1}^{i}=\left[ \begin{array}{c}
	0\\
	-\rho _{i_1}^{i} \phi _{x,i_1}^{i}\\
	-m_{i_1}^{i} \phi _{x,i_1}^{i}\\
\end{array} \right] ,$$
$$\begin{aligned}\mathbf{S}_{i_1}^{corr,i}=\sigma _i\left( \mathbf{V}_{i_1}^{i}-\bar{\mathbf{V}}^i \right),&\quad 
\sigma _i=\frac{\displaystyle\sum_{i_1=0}^k\omega_{i_1}{\left( \mathbf{V}_{i_1}^{i}-\mathbf V_{i_1}^{e,i} \right) ^T\mathbf{S}_{i_1}^{0,i}}}{\displaystyle\sum_{i_1=0}^k\omega_{i_1}{\left\| \mathbf{V}_{i_1}^{i}-\bar{\mathbf{V}}^i \right\| ^2}},
\\
\mathbf V_{i_1}^i=\mathbf V(\mathbf U_{i_1}^i),\quad \mathbf V_{i_1}^{e,i}&=\mathbf V(\mathbf U_{i_1}^{e,i}),\quad \bar{\mathbf V}^i=\sum\limits_{i_1=0}^k\frac{\omega_{i_1}}{2}\mathbf V_{i_1}^i. 
\end{aligned}$$
In particular, if $\sum\limits_{i_1=0}^k\omega_{i_1} \left\| \mathbf{V}_{i_1}^{i}-\bar{\mathbf{V}}^i \right\| ^2=0$, we just set $\mathbf{S}_{i_1}^{corr,i}=\mathbf{0}$.
The additional term $\mathbf{S}^{corr}$ is a correction term to ensure entropy dissipation. Similar techniques are used in \cite{abgrall2018general, chen2020review, gaburro2023high, liu2024non}. In contrast to the aforementioned works, it can be seen from \eqref{eq:scheme1D} that this term is entirely local, depending solely on information from $K_i$. Moreover, it is a high-order term that does not affect mass conservation or accuracy. These properties will be analyzed in the next section.
Compared to the WB scheme in \cite{xu2024high}, our method employs entropy conservative fluxes and incorporates an additional entropy correction term to prevent total entropy increase. In addition, unlike the ES scheme in \cite{liu2025structure}, which is WB only for hydrostatic equilibria, our scheme achieves the WB property for general equilibrium states by carefully modifying the discretization of the source term.

One can rewrite the semi-discrete nodal DG formulation \eqref{eq:scheme1D} as a system of ODEs
$$
\frac{\mathrm{d} \mathbf{U}_h}{\mathrm{d}t} = \mathcal{L}_h(\mathbf{U}_h).
$$
Then, the fully-discrete scheme can be obtained by applying an ODE solver. A strong stability preserving Runge-Kutta (SSP-RK) technique is adopted, consisting of convex combinations of explicit Euler steps. To further ensure the PP property, the well-established PP limiter \cite{zhang2010positivity} is incorporated here on each stage.
The discussion that follows centers on the fully-discrete scheme with forward Euler time discretization, represented by:
\begin{equation}\label{eq:EF0}
\mathbf{U}_h^{EF} = \mathbf{U}_h^n + \Delta t \cdot \mathcal{L}_h(\mathbf{U}_h^n).
\end{equation}
To preserve positivity of density and pressure variables at each Gauss-Lobatto integration point, the high-order scaling-based PP limiter $\Pi_h$ is implemented:
\begin{equation}\label{eq:EF}
\mathbf{U}^{n+1}_h = \Pi_h \,\mathbf{U}_h^{EF}= \Pi_h \left( \mathbf{U}_h^n + \Delta t \cdot \mathcal{L}_h(\mathbf{U}_h^n) \right).
\end{equation}
The main idea of the limiter $\Pi_h$ is scaling the numerical solutions towards the cell averages in $K_i$
$$
 \bar{\mathbf{U}}^{i, EF} 
 = \frac{1}{\Delta x} \int_{K_i} \mathbf{U}_h^{EF}(x) \, \mathrm{d}x 
 = \sum_{i_1=0}^k \frac{\omega_{i_1}}{2} \mathbf{U}_{i_1}^{i, EF},  
 $$
using two parameters $\theta_i^{(1)}, \theta_i^{(2)} \in [0, 1]$:
\begin{equation}\label{pp}
\begin{aligned}
\tilde{\rho}^{i,EF}_{i_1} &= \left( 1 - \theta_i^{(1)} \right) \bar{\rho}^{i,EF} + \theta_i^{(1)} \rho^{i,EF}_{i_1}, \\
\left( \begin{array}{c}
    \rho_{i_1} \\
    m_{i_1} \\
    \mathcal{E}_{i_1}
\end{array} \right)^{i,n+1} &= \left( 1 - \theta_{i}^{(2)} \right) \left( \begin{array}{c}
    \bar{\rho} \\
    \bar{m} \\
    \bar{\mathcal{E}}
\end{array} \right)^{i,EF} + \theta_{i}^{(2)} \left( \begin{array}{c}
    \tilde{\rho}_{i_1} \\
    m_{i_1} \\
    \mathcal{E}_{i_1}
\end{array} \right)^{i,EF}.
\end{aligned}
\end{equation}
The values of $\theta$ are given as
\begin{equation} \label{eq:rhoPP}
\theta_i^{(1)} = \min\left\{ \frac{\bar{\rho}^{i,EF} - \varepsilon}{\bar{\rho}^{i,EF} - \rho_m}, 1 \right\}, \quad 
\rho_m = \min\limits_{0 \le i_1 \le k} \rho^{i,EF}_{i_1},
\end{equation}
\begin{equation}\label{eq:tPP}
\theta_i^{(2)} = \min\limits_{0 \le i_1 \le k}\{ t_{i_1} \}, \quad t_{i_1} = \begin{cases}
    1, & p\left( \widetilde{\mathbf{U}}_{i_1}^{i,EF} \right) \ge \varepsilon, \\
    \tilde{t}_{i_1}, & p\left( \widetilde{\mathbf{U}}_{i_1}^{i,EF} \right) < \varepsilon.
\end{cases}
\end{equation}
Here, the parameter $\varepsilon=10^{-13}$ is a small number. 
In \eqref{eq:tPP}, $\widetilde{\mathbf U}_{i_1}^{i,EF}=(\tilde\rho_{i_1}^{i,EF},m_{i_1}^{i,EF},\mathcal E_{i_1}^{i,EF})^T$, and the scaling factor $\tilde t_{i_1}$ is solved from the nonlinear equation
$$ p\left((1-\tilde t_{i_1})\bar{\mathbf U}^{i,n} + \tilde t_{i_1}\widetilde{\mathbf U}_{i_1}^{i,n}\right) = \varepsilon.$$
It is easy to check that, if $\mathbf U_{i_1}^{i,EF}\in \mathscr{G}$ for all $i_1$, then $\theta_i^{(1)}=\theta_i^{(2)}=1$. 
Additionally, it is important to note that the cell average remains unchanged under $\Pi_h$, meaning $\bar{\mathbf U}^{i,n+1} = \bar{\mathbf U}^{i,EF}$. 
When $\bar{\mathbf U}^{i,EF}$ belongs to $\mathscr{G}$, the limiting procedure in equation \eqref{pp} guarantees that the nodal values at all Gauss-Lobatto points satisfy  $\mathbf{U}_{i_1}^{i,n+1} \in \mathscr{G}$. 

\subsection{Properties of the scheme}

\begin{Thm}[Mass-conservation]
The scheme \eqref{eq:scheme1D} conserves the mass. 
\end{Thm}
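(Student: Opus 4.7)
The plan is to derive a discrete conservation law for the cell-averaged density $\bar{\rho}^i = \sum_{i_1} \tfrac{\omega_{i_1}}{2}\rho^{i}_{i_1}$, and then to telescope over cells using periodic or compactly supported boundary conditions. Concretely, I would project the vector scheme \eqref{eq:scheme1D} onto its first (density) component, multiply by $\tfrac{\omega_{i_1}}{2}$ and sum $i_1=0,\dots,k$ to obtain an ODE for $\bar{\rho}^i$. Since the goal is to reduce everything to an interface numerical mass flux, the key identity I need is the reduction of the volume term via SBP.

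For the volume flux term, I would write $\sum_{i_1}\tfrac{\omega_{i_1}}{2}\cdot\tfrac{2}{\Delta x}\sum_l 2D_{i_1,l}F^S_1(\mathbf U^i_{i_1},\mathbf U^i_l)=\tfrac{2}{\Delta x}\sum_{i_1,l}S_{i_1,l}F^S_1(\mathbf U^i_{i_1},\mathbf U^i_l)$ using $S=MD$, then symmetrize by swapping the dummy indices $i_1\leftrightarrow l$ and invoking the symmetry $F^S_1(\mathbf U^i_{i_1},\mathbf U^i_l)=F^S_1(\mathbf U^i_l,\mathbf U^i_{i_1})$ together with $S+S^T=B$ from \eqref{eq:SBP1}. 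Because $B$ is diagonal with only $B_{0,0}=-1$ and $B_{k,k}=1$, and by consistency $F^S_1(\mathbf U,\mathbf U)=m$, the volume term collapses to $\tfrac{1}{\Delta x}(m^i_k-m^i_0)$. Combining with the surface correction $\tfrac{2}{\Delta x}\tfrac{\tau_{i_1}}{\omega_{i_1}}(\mathbf F^{*,i}_{i_1}-\mathbf F^{i}_{i_1})$, whose cell-averaged first component becomes $\tfrac{1}{\Delta x}\bigl(\hat F_{1,i+1/2}-\hat F_{1,i-1/2}+m^i_0-m^i_k\bigr)$, the interior terms cancel cleanly and only the standard flux difference $\tfrac{1}{\Delta x}\bigl(\hat F_{1,i+1/2}-\hat F_{1,i-1/2}\bigr)$ remains.

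The remaining task is to show that the three source-like contributions on the right-hand side of \eqref{eq:scheme1D} produce no net density increment. For $\mathbf S^{i}_{i_1}$ and $\mathbf S^{e,i}_{i_1}$ this is immediate since their first components are identically zero. For $\mathbf S^{corr,i}_{i_1}=\sigma_i(\mathbf V^i_{i_1}-\bar{\mathbf V}^i)$, the definition $\bar{\mathbf V}^i=\sum_{i_1}\tfrac{\omega_{i_1}}{2}\mathbf V^i_{i_1}$ together with $\sum_{i_1}\tfrac{\omega_{i_1}}{2}=1$ makes the cell-averaged contribution exactly zero componentwise, independent of the scalar $\sigma_i$. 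The subtle piece is $(\mathbf S^{0,i}_{i_1})^{(1)}$: applying the same SBP symmetrization as above to the equilibrium flux, its cell-averaged first component equals $\tfrac{1}{\Delta x}(m^{e,i}_k-m^{e,i}_0)$, which vanishes because in one dimension the equilibrium momentum is constant both in the hydrostatic case ($m^e\equiv 0$ by \eqref{eq:hydroeqbm}) and in the moving case ($m^e\equiv m_0$ by \eqref{eq:movingeqbm}); this is the one place I would flag as the main obstacle, since it relies on the 1D structural property of equilibria rather than a purely algebraic cancellation.

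Putting these pieces together yields the semi-discrete mass balance $\tfrac{d}{dt}\bar\rho^i+\tfrac{1}{\Delta x}(\hat F_{1,i+1/2}-\hat F_{1,i-1/2})=0$ in each cell. Summing over $i$ telescopes the boundary fluxes, and with periodic or compactly supported boundary conditions the global sum $\sum_i\Delta x\,\bar\rho^i$ is time-invariant, establishing mass conservation.
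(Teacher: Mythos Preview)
Your proof is correct and follows essentially the same route as the paper: multiply by the quadrature weights, sum over nodes, collapse the volume term via the SBP identity $S+S^T=B$ and the symmetry/consistency of $\mathbf F^S$, note that $\mathbf S^{i}$, $\mathbf S^{e,i}$ have zero first component, and that the correction term has zero cell average by construction. The one minor difference is your treatment of the equilibrium contribution from $\mathbf S^{0,i}$: you invoke the 1D structural fact $m^e\equiv\mathrm{const}$ (which does follow from $(m^e)_x=0$, the first component of \eqref{eq:eqbm}) to kill $m^{e,i}_k-m^{e,i}_0$ \emph{locally}, whereas the paper simply keeps $F_1^{e,i+1/2}-F_1^{e,i-1/2}$ on the right-hand side and lets it telescope globally along with the numerical flux; the paper's version thus avoids appealing to the specific form of the equilibrium, while yours yields the slightly stronger statement of exact \emph{local} mass conservation in each cell.
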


\begin{proof}
Utilizing the Gauss-Lobatto quadrature on \eqref{eq:scheme1D} on cell $K_i$ yields 
\begin{align*} \Delta x\frac{\mathrm d\bar {\mathbf U}^i}{\mathrm dt}+&\sum\limits_{i_1=0}^k\sum\limits_{l=0}^k2S_{i_1,l}\mathbf F^S(\mathbf U_{i_1}^i,\mathbf U_l^i)+(\hat{\mathbf F}_{i+1/2}-\hat{\mathbf F}_{i-1/2})-(\mathbf F_{k}^i-\mathbf F_0^i) 
\\=&\sum\limits_{i_1=0}^k\sum\limits_{l=0}^k2S_{i_1,l}\mathbf F^S(\mathbf U_{i_1}^{e,i},\mathbf U_l^{e,i})-\sigma_i \sum\limits_{i_1=0}^k \frac{\omega_{i_1}\Delta x}{2}(\mathbf V_{i_1}^i-\bar{\mathbf V}^i)
\\&+\sum\limits_{i_1=0}^k\frac{\omega_{i_1}\Delta x}{2}\left(\mathbf S_{i_1}^{i}-\mathbf S_{i_1}^{e,i}\right) .
\end{align*}
Notice that
$$
\sum_{i_1=0}^k{\frac{\omega _{i_1}}{2}\left( \mathbf{V}_{i_1}^{i}-\bar{\mathbf{V}}^i \right)}=\sum_{i_1=0}^k{\frac{\omega _{i_1}}{2}\mathbf{V}_{i_1}^{i}}-\left( \sum_{i_1=0}^k{\frac{\omega _{i_1}}{2}} \right) \bar{\mathbf{V}}^i=\bar{\mathbf{V}}^i-\bar{\mathbf{V}}^i=0.
$$
And in \cite{chen2017entropy}, it is proved that
\begin{align*}
\sum_{i_1=0}^k&{\sum_{l=0}^k{2S_{i_1,l}\mathbf{F}^S\left( \mathbf{U}_{i_1}^{i},\mathbf{U}_{l}^{i} \right)}}
=\mathbf{F}^{i}_k-\mathbf{F}^{i}_0.
\end{align*}
Similarly, we have $$\displaystyle\sum\limits_{i_1=0}^k\displaystyle\sum\limits_{l=0}^k 2S_{i_1,l}\mathbf F^S(\mathbf U_{i_1}^{e,i},\mathbf U_{l}^{e,i})=\mathbf F^{e,i+1/2}-\mathbf F^{e,i-1/2}.$$
Here, $\mathbf F^{e,i\pm1/2}=\mathbf F(\mathbf U^{e}(x_{i\pm 1/2}))$. Moreover, the first component of $\mathbf S$ and $\mathbf S^e$ are exactly zero.
Therefore, the cell-average of mass satisfies
$$ \Delta x\frac{\mathrm d\bar\rho^i }{\mathrm dt}+\hat F_{1,i+1/2}-\hat F_{1,i-1/2}= F^{e}_{1,i+1/2}-F^{e}_{1,i-1/2}, $$
where $F_1$ denotes the first component of $\mathbf F$. 
For periodic or compact support boundaries, summing over $i$ yields
$$
\frac{\mathrm{d}}{\mathrm{d}t}\left(\Delta x\sum_{i=1}^N{\bar{\rho}^i}\right)=0.
$$
\end{proof}

\begin{Thm}[Accuracy]
The scheme \eqref{eq:scheme1D} is high-order accurate if the solution $\mathbf U$ and the steady-state solution $\mathbf U^e$ are both sufficiently smooth. In particular, the truncation error is
\begin{equation}\label{eq:truncation1}
    \mathcal L_h(\mathbf U)-\mathcal L(\mathbf U)=\mathcal O(\Delta x^k)+\frac{\mathcal O(\Delta x^{k+1})}{\mathcal O(\Delta x^2)},
\end{equation}
where $\mathcal L(\mathbf U)=-\mathbf F(\mathbf U)_x+\mathbf S(\mathbf U,x)$. 
\end{Thm}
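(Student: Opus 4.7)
The plan is to split the nodal truncation error $\mathcal L_h(\mathbf U) - \mathcal L(\mathbf U)$ at $x_i(X_{i_1})$ into three contributions corresponding to the three groups of terms in \eqref{eq:scheme1D}: (i) the entropy-conservative flux-differencing term together with the interface correction $\frac{2}{\Delta x}\frac{\tau_{i_1}}{\omega_{i_1}}(\mathbf F_{i_1}^{*,i}-\mathbf F_{i_1}^i)$, compared against $-\mathbf F(\mathbf U)_x$; (ii) the equilibrium correction vector $\mathbf S_{i_1}^{0,i}$; and (iii) the entropy fix $-\mathbf S_{i_1}^{corr,i}$. Parts (i) and (ii) will deliver the leading $\mathcal O(\Delta x^k)$ term, while a structural bound on (iii) produces the quotient form $\frac{\mathcal O(\Delta x^{k+1})}{\mathcal O(\Delta x^2)}$ in \eqref{eq:truncation1}. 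The nodal source value $\mathbf S_{i_1}^i$ on the right-hand side of \eqref{eq:scheme1D} equals $\mathbf S(\mathbf U,x_i(X_{i_1}))$ exactly, so no error enters from that term.

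For part (i), I would Taylor-expand $\mathbf F^S(\mathbf U_{i_1}^i,\mathbf U_l^i)$ about $\mathbf U_{i_1}^i$, use the consistency $\mathbf F^S(\mathbf U,\mathbf U)=\mathbf F(\mathbf U)$, and combine with $\sum_l D_{i_1,l}=0$ from \eqref{eq:SBP2} and the exactness of the $(k+1)$-point Gauss--Lobatto differentiation matrix on polynomials of degree at most $k$. This gives the nodewise estimate $\frac{2}{\Delta x}\sum_l 2D_{i_1,l}\mathbf F^S(\mathbf U_{i_1}^i,\mathbf U_l^i) = \mathbf F(\mathbf U)_x|_{x_i(X_{i_1})} + \mathcal O(\Delta x^k)$. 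At boundary nodes $i_1\in\{0,k\}$, substituting the exact smooth $\mathbf U$ into the Lax--Friedrichs flux gives $\mathbf F_{i_1}^{*,i}=\mathbf F_{i_1}^i$ by consistency and continuity, so the interface correction vanishes identically. Part (ii) is handled by the identical argument applied to the smooth equilibrium $\mathbf U^e$: since $\mathbf F(\mathbf U^e)_x = \mathbf S(\mathbf U^e,x)$ holds pointwise, $\mathbf S_{i_1}^{0,i}$ is by construction the entropy-conservative flux-differencing truncation error for $\mathbf U^e$, hence $\mathbf S_{i_1}^{0,i}=\mathcal O(\Delta x^k)$.

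For part (iii), I would bound the numerator and denominator of $\sigma_i$ separately. Smoothness of $\mathbf V$ gives $\mathbf V_{i_1}^i - \bar{\mathbf V}^i = \mathcal O(\Delta x)$, so the denominator is $\sum_{i_1}\omega_{i_1}\|\mathbf V_{i_1}^i-\bar{\mathbf V}^i\|^2 = \mathcal O(\Delta x^2)$. For the numerator, the main obstacle is to sharpen the naive estimate $\sum_{i_1}\omega_{i_1}(\mathbf V_{i_1}^i-\mathbf V_{i_1}^{e,i})^T\mathbf S_{i_1}^{0,i} = \mathcal O(\Delta x^k)$ by one additional order. The key ingredient is the telescoping identity underlying entropy conservation: contracting the entropy-conservative flux-differencing form against a discrete entropy variable collapses to a local boundary contribution involving the potential flux $\psi$. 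Pairing this identity at $\mathbf U^e$ against the smooth difference $\mathbf V - \mathbf V^e$ (Taylor-expanded around $x_i$) and using the exactness of the Gauss--Lobatto quadrature on polynomials of degree $\le 2k-1$ extracts the extra factor of $\Delta x$, yielding the improved bound $\mathcal O(\Delta x^{k+1})$ for the numerator. Combining the two bounds delivers the quotient structure $\frac{\mathcal O(\Delta x^{k+1})}{\mathcal O(\Delta x^2)}$ for $\sigma_i$ itself, which is the structural form stated in \eqref{eq:truncation1}; the additional factor $\mathbf V_{i_1}^i-\bar{\mathbf V}^i=\mathcal O(\Delta x)$ then certifies that $\mathbf S_{i_1}^{corr,i}$ does not contaminate the overall $\mathcal O(\Delta x^k)$ accuracy of the scheme.
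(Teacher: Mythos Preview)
Your decomposition into the three contributions (i)--(iii) and your treatment of (i), (ii), and the denominator in (iii) match the paper's proof exactly. The one place you diverge is the numerator estimate in (iii): you aim for $\mathcal O(\Delta x^{k+1})$ via an entropy telescoping argument involving $\psi$, and then attribute the quotient $\mathcal O(\Delta x^{k+1})/\mathcal O(\Delta x^2)$ to $\sigma_i$ itself. The paper takes a more direct and simpler route. Since $\mathbf S_{i_1}^{0,i}=\mathcal O(\Delta x^k)$ pointwise (from part (ii)) and $\mathbf V-\mathbf V^e=\mathcal O(1)$, the numerator is immediately $\mathcal O(\Delta x^k)$; the paper even rewrites it as a quadrature of $(\mathbf V-\mathbf V^e)^T\bigl(\mathbf F(\mathbf U^e)_x-\mathbf S(\mathbf U^e,x)\bigr)$, which vanishes identically, leaving only the $\mathcal O(\Delta x^k)$ remainder. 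Hence $\sigma_i=\mathcal O(\Delta x^k)/\mathcal O(\Delta x^2)$, and the extra factor of $\Delta x$ in the stated quotient comes not from sharpening the numerator but from the multiplication by $\mathbf V_{i_1}^i-\bar{\mathbf V}^i=\mathcal O(\Delta x)$ in $\mathbf S_{i_1}^{corr,i}=\sigma_i(\mathbf V_{i_1}^i-\bar{\mathbf V}^i)$.

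Your sharper numerator bound may well be true, but the mechanism you sketch --- contracting the entropy-conservative flux-differencing against $\mathbf V-\mathbf V^e$ and invoking the identity \eqref{eq:ESflux} --- does not directly apply, since that identity requires the entropy variables to be those associated with the states appearing in $\mathbf F^S$, not a mismatched pair. The paper's route avoids this entirely and is cleaner; it also makes transparent why, as the paper remarks, one cannot in general simplify $\mathcal O(\Delta x^{k+1})/\mathcal O(\Delta x^2)$ to $\mathcal O(\Delta x^{k-1})$.
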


\begin{proof}
Substituting the exact solution $\mathbf U$ and $\mathbf U^e$ into \eqref{eq:scheme1D} yields
\begin{equation}\begin{aligned}
\left( \mathcal{L} _h\left( \mathbf{U} \right) -\mathcal{L} \left( \mathbf{U} \right) \right) |_{x_i\left( X_{i_1} \right)}=&-\frac{2}{\Delta x}\sum_{l=0}^k{2D_{i_1,l}\mathbf{F}^S\left( \mathbf{U}_{i_1}^{i},\mathbf{U}_{l}^{i} \right)}+\mathbf{F}\left( \mathbf{U} \right) _x|_{x_i\left( X_{i_1} \right)}
\\
&-\frac{2}{\Delta x}\frac{\tau _{i_1}}{\omega _{i_1}}\left( \mathbf{F}_{i_1}^{*,i}-\mathbf{F}_{i_1}^{i} \right) 
+\mathbf{S}_{i_1}^{i}-\mathbf{S}\left( \mathbf{U},x_i\left( X_{i_1} \right) \right)
\\
&+\left( \mathbf{S}^0 \right) _{i_1}^{i}-\left( \mathbf{S}^{corr} \right) _{i_1}^{i}. \end{aligned} 
\end{equation}
At first, we want to show that the last two terms $\mathbf S^0,\ \mathbf S^{corr}$ are both high order. For $\mathbf S^0$, similar to \cite{chen2017entropy}, it can be verified that 
$$\begin{aligned}
\frac{2}{\Delta x}\sum_{l=0}^k{2D_{i_1,l}\mathbf{F}^S\left( \mathbf{U}_{i_1}^{e,i},\mathbf{U}_{l}^{e,i} \right)}&=\mathbf F(\mathbf U^e)_x|_{ x_i\left( X_{i_1} \right) } +\mathcal{O} \left( \Delta x^k \right) \\&=\mathbf{S}\left( \mathbf{U}^e,x_i\left( X_{i_1} \right) \right) +\mathcal{O} \left( \Delta x^k \right), \end{aligned}
$$
which indicates $\mathbf S^{0,i}_{i_1} = \mathcal O(\Delta x^k)$ is high-order. For $\mathbf S^{corr}$, since the quadrature is exact for polynomials of degree at most $2k - 1$, we have 
\begin{equation*}\begin{aligned}
\sum_{i_1=0}^k&{\omega _{i_1}\left( \mathbf{V}_{i_1}^{i}-\mathbf{V}_{i_1}^{e,i} \right) ^T\mathbf{S}^{0,i}_{i_1}}
\\&=\sum\limits_{i_1=0}^k\omega_{i_1}(\mathbf V_{i_1}^i-\mathbf V_{i_1}^{e,i})\left(\mathbf F(\mathbf U^e)_x|_{x_i(X_{i_1})}+\mathcal O(\Delta x^k)-\mathbf S(\mathbf U^e,x_i(X_{i_1}))\right)
\\&=\frac{2}{\Delta x}\int_{K_i}{\left( \mathbf{V}-\mathbf{V}^{e} \right) \left( \mathbf F(\mathbf U^e)_x-\mathbf{S}\left( \mathbf{U}^e,x \right) \right) \mathrm{d}x}+\mathcal O(\Delta x^{2k})+\mathcal{O} \left( \Delta x^{k} \right) 
\\&=0+\mathcal{O} \left( \Delta x^{k} \right)=\mathcal O(\Delta x^{k}).
\end{aligned}
\end{equation*}
Further, we have $\mathbf V_{i_1}^{i}-\bar{\mathbf V}^i=\mathcal O(\Delta x)$, which indicates $\mathbf S^{corr,i}_{i_1}=\mathcal O(\Delta x^{k+1})/\mathcal O(\Delta x^2)$. 

Meanwhile, $\mathbf F_{i_1}^{*,i}=\mathbf F_{i_1}^i=\mathbf F(\mathbf U(x_i(X_{i_1})), i_1=0,\ldots,k$ and $\mathbf S_{i_1}^i=\mathbf S(\mathbf U,x_i(X_{i_1}))$ are exact.  
Finally, for the left-hand side terms in \eqref{eq:scheme1D}, it is proved in \cite{chen2017entropy} that
$$
\sum_{l=0}^k{\frac{4}{\Delta x}}D_{i_1,l}\mathbf{F}^S\left( \mathbf{U}_{i_1}^{i},\mathbf{U}_{l}^{i} \right) -\mathbf{F}\left( \mathbf{U} \right) _x|_{x_i(X_{i_1})}=\mathcal{O} \left( \Delta x^k \right) .
$$
By combining all the above, we can obtain \eqref{eq:truncation1}.

\end{proof}
\begin{Remark}
We have proven $\mathbf S^{corr,i}_{i_1}=\mathcal O(\Delta x^{k+1})/\mathcal O(\Delta x^2)$. However, this does not directly imply $\mathbf S^{corr,i}_{i_1}=\mathcal O(\Delta x^{k-1})$, since such an estimate would require the denominator to admit a strictly positive uniform lower bound. Also see \cite{chen2020review}.
\end{Remark}
\begin{Remark}
Although the proven theoretical accuracy is not optimal, the numerical experiments demonstrate the optimal $(k+1)$-th order of accuracy. This observation is consistent with the findings reported in \cite{chen2020review}.
\end{Remark}

\begin{Thm}[Well-balancedness]\label{thm:wb1}
The scheme \eqref{eq:scheme1D} is WB for a general known equilibrium state solution
$\mathbf U^e=(\rho^e,m^e,\mathcal E^e)$, i.e.
\begin{equation}
    \mathcal L_h(\mathbf U_h)=\mathbf 0,\qquad \mathrm{if}\ \ \mathbf U_h=\mathbf U_h^e.
\end{equation}
\end{Thm}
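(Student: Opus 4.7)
The plan is to substitute $\mathbf{U}_h=\mathbf{U}_h^e$ directly into the residual $\mathcal{L}_h$ given by \eqref{eq:scheme1D} and to verify, piece by piece, that every term on the left-hand side is cancelled by a matching term on the right-hand side. The scheme has been designed precisely for this cancellation, so the proof should reduce to careful bookkeeping, with one subtle point concerning the entropy correction term.

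I would first dispose of the surface contribution. Since $\mathbf{U}^e$ is a smooth globally defined function, its Gauss--Lobatto interpolants on neighbouring cells agree at the shared interface: $\mathbf{U}_k^{e,i}=\mathbf{U}_0^{e,i+1}=\mathbf{U}^e(x_{i+1/2})$. By consistency of the Lax--Friedrichs flux, $\hat{\mathbf{F}}_{i\pm 1/2}=\mathbf{F}(\mathbf{U}^e(x_{i\pm 1/2}))$, which coincides with $\mathbf{F}_{i_1}^{e,i}$ at the corresponding boundary node, so the term $(\tau_{i_1}/\omega_{i_1})(\mathbf{F}_{i_1}^{*,i}-\mathbf{F}_{i_1}^i)$ vanishes for every $i_1$. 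Next, the volume flux $\frac{2}{\Delta x}\sum_{l} 2D_{i_1,l}\mathbf{F}^S(\mathbf{U}_{i_1}^{e,i},\mathbf{U}_l^{e,i})$ on the left is, by construction, exactly the first summand in the definition of $\mathbf{S}^{0,i}_{i_1}$ on the right, so these cancel; and $\mathbf{U}_h=\mathbf{U}_h^e$ also forces $\mathbf{S}_{i_1}^i=\mathbf{S}_{i_1}^{e,i}$, which cancels the remaining $-\mathbf{S}_{i_1}^{e,i}$ piece of $\mathbf{S}^{0,i}_{i_1}$.

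All that remains is to show $\mathbf{S}_{i_1}^{corr,i}=\mathbf{0}$, and this is the one step where the specific design of the correction is used. The key observation is that the \emph{numerator} of $\sigma_i$ is written as $\sum_{i_1}\omega_{i_1}(\mathbf{V}_{i_1}^i-\mathbf{V}_{i_1}^{e,i})^T\mathbf{S}_{i_1}^{0,i}$, with $\mathbf{V}_{i_1}^{e,i}$ rather than the cell mean $\bar{\mathbf{V}}^i$ playing the role of the reference; at equilibrium we have $\mathbf{V}_{i_1}^i=\mathbf{V}_{i_1}^{e,i}$ pointwise, so this numerator vanishes identically. If the denominator is strictly positive then $\sigma_i=0$, while if the denominator also vanishes the built-in convention sets $\mathbf{S}^{corr,i}_{i_1}=\mathbf{0}$. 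Either way the correction disappears and $\mathcal{L}_h(\mathbf{U}_h^e)=\mathbf{0}$.

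The only real obstacle I anticipate is conceptual rather than technical: one must notice and exploit the asymmetry between the reference state in the numerator of $\sigma_i$ (which is $\mathbf{V}^{e}$) and in the factor $\mathbf{V}_{i_1}^i-\bar{\mathbf{V}}^i$ appearing in $\mathbf{S}^{corr}$ itself (which is $\bar{\mathbf{V}}$). This asymmetry is precisely what allows WB and entropy dissipation to coexist; once it is appreciated, the rest of the proof is pure substitution.
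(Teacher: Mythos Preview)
Your proposal is correct and follows essentially the same route as the paper's own proof: verify continuity of $\mathbf{U}_h^e$ at interfaces to kill the surface term, match the volume flux and source terms by direct substitution, and observe that $\mathbf{S}^{corr}$ vanishes at equilibrium. Your discussion of the correction term is in fact more explicit than the paper's, which simply asserts $\mathbf{S}_{i_1}^{corr,i}=\mathbf{0}$ without separating the numerator-vanishes and denominator-vanishes cases.
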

\begin{proof}

Firstly, if $\mathbf U_h=\mathbf U_h^e$, it is straightforward to verify that 
$$ \mathbf S_{i_1}^{i}=\mathbf S_{i_1}^{e,i},\quad \mathbf F^S(\mathbf U_{i_1}^i,\mathbf U_l^i)=\mathbf F^S(\mathbf U_{i_1}^{e,i},\mathbf U_{l}^{e,i}),\quad \mathbf S_{i_1}^{corr,i}=\mathbf 0. $$
Moreover, 
$\mathbf U^e_h$ is continuous at the interfaces $x_{i+1/2}$, since it interpolates the steady state solution $\mathbf U^e$ at Gauss-Lobatto points, which include the interface location. Hence, utilizing the consistency of $\hat{\mathbf F}$, we have 
$\mathbf F_{i_1}^{*,i}=\mathbf F_{i_1}^i,\,i_1=0,k.$ 
Furthermore,
\begin{equation*}\begin{aligned}
\mathcal{L} _h\left( \mathbf{U}_h \right) =& -\frac{2}{\Delta x}\sum_{l=0}^k{2D_{i_1,l}\mathbf{F}^S\left( \mathbf{U}_{i_1}^{i},\mathbf{U}_{l}^{i} \right)}-\frac{2}{\Delta x}\frac{\tau _{i_1}}{\omega _{i_1}}\left( \mathbf{F}_{i_1}^{*,i}-\mathbf{F}_{i_1}^{i} \right) +\mathbf{S}_{i_1}^{i}-\mathbf{S}_{i_1}^{e,i}
\\
&+\frac{2}{\Delta x}\sum_{l=0}^k{2D_{i_1,l}\mathbf{F}^S\left( \mathbf{U}_{i_1}^{e,i},\mathbf{U}_{l}^{e,i} \right)}-(\mathbf{S}^{corr})_{i_1}^{i}
\\
=&\ \mathbf 0.\end{aligned}
\end{equation*}
\end{proof}

\begin{Remark}
For the fully-discrete scheme \eqref{eq:EF}, if $\mathbf U_h^n=\mathbf U_h^e$, then $\mathcal L_h(\mathbf U_h^n)=\mathbf 0$, yielding $\mathbf U_h^{EF}=\mathbf U_h^e$. 
When $\mathbf U^e(x)\in\mathscr G$, we have 
$\mathbf U_{i_1}^{EF,i} =\mathbf U^e(x_i(X_{i_1})) \in\mathscr G$
for all $i,i_1$, resulting in $\theta_i^{(1)}=\theta_i^{(2)}=1$. Therefore, we have $\mathbf U_h^{n+1}=\Pi_h(\mathbf U_h^{EF})=\mathbf U_h^e$, i.e., the fully-discrete scheme \eqref{eq:EF} is also WB.
\end{Remark}

\begin{Remark}
Notably, the design of the scheme and the proof of Theorem \ref{thm:wb1} do not depend on any particular properties of isothermal or isentropic equilibria. As a result, the scheme is, in principle, WB for arbitrary equilibrium states.
\end{Remark}

\begin{Remark}
At boundaries, to achieve the WB property, the boundary treatments should be consistent with the structure of the equilibrium state $\mathbf U^e$. For example, at the left boundary $x=a$, we need $\mathbf U_h(a^-)=\mathbf U_h(a^+)$ when $\mathbf U_h=\mathbf U_h^e$. This condition can be easily verified for outflow and Dirichlet boundaries. For periodic boundaries, if $\mathbf U^e$ is periodic, then  $\mathbf U_h(a^-)=\mathbf U_h(b^-)=\mathbf U^e(b)=\mathbf U^e(a)=\mathbf U_h(a^+)$.  For solid-wall boundaries, if the velocity is continuous, then at $x=a$ we have $u^e=0$, while $\rho^e$ and $\mathcal E^e$ are continuous at $x=a$. Therefore, the standard reflective boundary treatment can also satisfy $\mathbf U_h(a^-)=\mathbf U_h(a^+)$ when $\mathbf U_h=\mathbf U_h^e$.  For more complex boundaries, the situation requires case-by-case analysis. 
\end{Remark}

\begin{Thm}[Entropy stability] 
Assume the boundary is periodic or compact support. The scheme \eqref{eq:scheme1D} is entropy conservative in a single element in the sense of
\begin{equation}
    \frac{\mathrm{d}}{\mathrm{d}t} \left( \sum_{i_1=0}^k{\frac{\omega _{i_1}\Delta x}{2}\mathcal{U} \left( \mathbf{U}_{i_1}^{i} \right)} \right) =-(\mathcal{F} _{k}^{*,i}-\mathcal{F} _{0}^{*,i}),
\end{equation}
and entropy stable in $\Omega$ in the sense of
\begin{equation}
    \frac{\mathrm{d}}{\mathrm{d}t} \left( \sum_{i=1}^N{\sum_{i_1=0}^k{\frac{\omega_{i_1}\Delta x}{2}\mathcal{U} \left( \mathbf{U}_{i_1}^{i} \right)}} \right) \le 0,
\end{equation}
where
$$
\mathcal{F} _{k}^{*,i}=\left( \mathbf{V}_{k}^{i} \right) ^T\hat{\mathbf{F}}_{i+1/2}-\psi _{k}^{i},\quad \mathcal{F} _{0}^{*,i}=\left( \mathbf{V}_{0}^{i} \right) ^T\hat{\mathbf{F}}_{i-1/2}-\psi _{0}^{i}.
$$
\end{Thm}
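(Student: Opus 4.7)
The plan is to mimic the standard Tadmor-style discrete energy argument: contract the scheme \eqref{eq:scheme1D} with $\mathbf V_{i_1}^{i,T}$ and weights $\frac{\omega_{i_1}\Delta x}{2}$, then sum over the Gauss-Lobatto nodes inside a single cell $K_i$. The time-derivative term immediately produces $\frac{\mathrm d}{\mathrm dt}\sum_{i_1}\frac{\omega_{i_1}\Delta x}{2}\mathcal U(\mathbf U_{i_1}^i)$ by definition of the entropy variable. Everything else splits into a volume flux contribution, an interface flux contribution, and a source contribution, and the job is to show that the first two combine into $\mathcal F_k^{*,i}-\mathcal F_0^{*,i}$ while the source contribution vanishes up to the equilibrium boundary entropy flux, which then cancels upon global summation.

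For the volume term, I would follow the standard Fisher--Carpenter argument. Define $Q_{i_1,l}=\mathbf V_{i_1}^{i,T}\mathbf F^S(\mathbf U_{i_1}^i,\mathbf U_l^i)$; the symmetry of $\mathbf F^S$ together with the entropy conservation identity \eqref{eq:ESflux} gives $Q_{l,i_1}-Q_{i_1,l}=\psi_l-\psi_{i_1}$. Writing $2S_{i_1,l}=(S_{i_1,l}+S_{l,i_1})+(S_{i_1,l}-S_{l,i_1})=B_{i_1,l}+(S_{i_1,l}-S_{l,i_1})$ and applying the SBP relations \eqref{eq:SBP1}--\eqref{eq:SBP2}, together with the Legendre-transform identity $\mathbf V^T\mathbf F=\mathcal F+\psi$ at diagonal nodes, the double sum collapses to $\mathcal F_k^i-\mathcal F_0^i$. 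The interface piece, after multiplication by $\tau_{i_1}/\omega_{i_1}$, is nonzero only at $i_1=0,k$; expanding it as $\mathbf V^T\hat{\mathbf F}-(\mathcal F+\psi)$ at the two boundary nodes and adding the volume result makes the interior entropy fluxes $\pm\mathcal F_{0,k}^i$ cancel, leaving exactly $\mathcal F_k^{*,i}-\mathcal F_0^{*,i}$ on the left-hand side.

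The delicate part is the source term $\mathbf S_{i_1}^i+\mathbf S^{0,i}_{i_1}-\mathbf S^{corr,i}_{i_1}$. The pointwise orthogonality $\mathbf V^T\mathbf S=0$ (already noted in Section~\ref{sec2}) kills the contribution of $\mathbf S_{i_1}^i$ and of $\mathbf V^{e,T}\mathbf S^{e,i}_{i_1}$. The heart of the argument is that $\sigma_i$ is designed precisely so that $\sum_{i_1}\omega_{i_1}\mathbf V_{i_1}^{i,T}\mathbf S^{corr,i}_{i_1}=\sum_{i_1}\omega_{i_1}(\mathbf V_{i_1}^i-\mathbf V_{i_1}^{e,i})^T\mathbf S^{0,i}_{i_1}$; this uses the algebraic identity $\sum_{i_1}\omega_{i_1}\mathbf V_{i_1}^{i,T}(\mathbf V_{i_1}^i-\bar{\mathbf V}^i)=\sum_{i_1}\omega_{i_1}\|\mathbf V_{i_1}^i-\bar{\mathbf V}^i\|^2$, a consequence of $\sum_{i_1}\omega_{i_1}(\mathbf V_{i_1}^i-\bar{\mathbf V}^i)=0$. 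Therefore $\sum_{i_1}\omega_{i_1}\mathbf V_{i_1}^{i,T}(\mathbf S^{0,i}_{i_1}-\mathbf S^{corr,i}_{i_1})=\sum_{i_1}\omega_{i_1}\mathbf V_{i_1}^{e,i,T}\mathbf S^{0,i}_{i_1}$, and now reapplying the very same SBP/entropy-conservative machinery, but with the equilibrium state $(\mathbf V^{e,i},\mathbf U^{e,i})$ in place of $(\mathbf V^i,\mathbf U^i)$, collapses this to $\frac{2}{\Delta x}(\mathcal F^{e,i}_k-\mathcal F^{e,i}_0)$. The degenerate case $\sum_{i_1}\omega_{i_1}\|\mathbf V_{i_1}^i-\bar{\mathbf V}^i\|^2=0$ must be checked separately; in that case $\mathbf V^i$ is constant on $K_i$, and one can verify the numerator of $\sigma_i$ vanishes as well, making $\mathbf S^{corr,i}\equiv\mathbf 0$ consistent.

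Assembling these pieces produces the element-level balance. Summing over $i$ then yields the global statement: the equilibrium boundary contributions $\sum_i(\mathcal F^{e,i}_k-\mathcal F^{e,i}_0)$ telescope to zero because $\mathbf U^e$ is pointwise continuous and respects the periodic or compactly supported boundary conditions, while $\sum_i(\mathcal F^{*,i}_k-\mathcal F^{*,i}_0)$ regroups across interfaces into $\sum_{i+1/2}\bigl[(\psi_0^{i+1}-\psi_k^i)-(\mathbf V_0^{i+1}-\mathbf V_k^i)^T\hat{\mathbf F}_{i+1/2}\bigr]$, which is nonnegative termwise by the entropy stability of $\hat{\mathbf F}$. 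The main obstacle is the third paragraph: cleanly isolating the residual $\sum\omega\,\mathbf V^{e,T}\mathbf S^0$ after the $\mathbf S^{corr}$ cancellation, and recognizing that the \emph{same} SBP-plus-EC identity used on the physical state applies verbatim to the equilibrium state so that what remains is nothing but a boundary flux that disappears globally.
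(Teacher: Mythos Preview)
Your overall strategy is correct and matches the paper's proof closely: contract with $\mathbf V^{i,T}_{i_1}$, use the SBP/entropy-conservative identity to collapse the volume-plus-interface contribution into $\mathcal F^{*,i}_k-\mathcal F^{*,i}_0$, use pointwise orthogonality $\mathbf V^T\mathbf S=0$, and observe that $\sigma_i$ is chosen precisely so that what survives from $\mathbf S^0-\mathbf S^{corr}$ is $\sum_{i_1}\omega_{i_1}(\mathbf V^{e,i}_{i_1})^T\mathbf S^{0,i}_{i_1}$, which the same SBP argument turns into $\mathcal F^{e,i+1/2}-\mathcal F^{e,i-1/2}$.

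However, there is a genuine gap in the element-level statement. You carry the residual $\mathcal F^{e,i}_k-\mathcal F^{e,i}_0$ forward and kill it only by telescoping over $i$. That would prove the global inequality, but the theorem asserts the stronger element-local identity
\[
\frac{\mathrm d}{\mathrm dt}\sum_{i_1}\frac{\omega_{i_1}\Delta x}{2}\,\mathcal U(\mathbf U^i_{i_1})=-(\mathcal F^{*,i}_k-\mathcal F^{*,i}_0)
\]
with \emph{no} equilibrium flux term. The missing step is that in 1D the equilibrium entropy flux is a constant: by \eqref{eq:movingeqbm} one has $m^e\equiv m_0$ and $s^e\equiv s_0$, hence $\mathcal F(\mathbf U^e(x))=-m^e s^e/(\gamma-1)\equiv\mathrm{const}$, so $\mathcal F^{e,i+1/2}-\mathcal F^{e,i-1/2}=0$ already at the element level. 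This is precisely equation \eqref{eq:ES1D} in the paper, and it is what distinguishes the 1D statement from the 2D one (compare the Remark after Theorem~4.4, where in 2D the element-level balance \eqref{eq:entropy_conservative} does retain the equilibrium boundary fluxes and only the global sum kills them). Your telescoping argument is the 2D mechanism; for the 1D theorem as stated you need the constancy of $\mathcal F(\mathbf U^e)$.

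A minor point: your claim that in the degenerate case the numerator of $\sigma_i$ also vanishes is not obvious and is not what the proof needs. What you actually need when $\mathbf V^i_{i_1}\equiv\bar{\mathbf V}^i$ is that $\sum_{i_1}\omega_{i_1}(\mathbf V^i_{i_1})^T\mathbf S^{0,i}_{i_1}=\bar{\mathbf V}^{i,T}\sum_{i_1}\omega_{i_1}\mathbf S^{0,i}_{i_1}$ equals $\mathcal F^{e,i+1/2}-\mathcal F^{e,i-1/2}$, and then the 1D constancy again closes the argument.
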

\begin{proof}

Left multiplying $\mathbf V^T$ in \eqref{eq:scheme1D} and integral in cell $K_i$ yields
\begin{equation*}\begin{aligned}
\frac{\mathrm{d}}{\mathrm{d}t}&\sum_{i_1=0}^k\frac{\omega _{i_1}\Delta x}{2}\mathcal{U} \left( \mathbf{U}_{i_1}^{i} \right) =\sum_{i_1=0}^k{\frac{\omega _{i_1}\Delta x}{2}\left( \mathbf{V}_{i_1}^{i} \right) ^T\frac{\mathrm{d}\mathbf{U}_{i_1}^{i}}{\mathrm{d}t}}
\\
=&-\sum_{i_1=0}^k{\sum_{l=0}^k{2S_{i_1,l}\left( \mathbf{V}_{i_1}^{i} \right) ^T\mathbf{F}^S\left( \mathbf{U}_{i_1}^{i},\mathbf{U}_{l}^{i} \right)}}-\sum_{i_1=0}^k{\tau_{i_1}\left( \mathbf{V}_{i_1}^{i} \right) ^T\left( \mathbf{F}_{i_1}^{*,i}-\mathbf{F}_{i_1}^{i} \right)}
\\
&+\sum_{i_1=0}^k{\frac{\omega _{i_1}\Delta x}{2}\left( \mathbf{V}_{i_1}^{i} \right) ^T}\mathbf{S}_{i_1}^{i}
\\&+\sum_{i_1=0}^k{\frac{\omega _{i_1}\Delta x}{2}\left( \mathbf{V}_{i_1}^{i} \right) ^T\mathbf{S}_{i_1}^{0,i}}-\sigma _i\frac{\Delta x}{2}\sum_{i_1=0}^k{\omega _{i_1}\left(\mathbf V_{i_1}^i\right)^T\left( \mathbf{V}_{i_1}^{i}-\bar{\mathbf{V}}^i \right)}.
\end{aligned}
\end{equation*}
In \cite{chen2017entropy}, it is proved that
\begin{equation}\label{eq:T1}
-\sum_{i_1=0}^k{\sum_{l=0}^k{2S_{i_1,l}\left( \mathbf{V}_{i_1}^{i} \right) ^T\mathbf{F}^S\left( \mathbf{U}_{i_1}^{i},\mathbf{U}_{l}^{i} \right)}}-\sum_{i_1=0}^k{\tau_{i_1}\left( \mathbf{V}_{i_1}^{i} \right) ^T\left( \mathbf{F}_{i_1}^{*,i}-\mathbf{F}_{i_1}^{i} \right)}=-\left( \mathcal{F} _{k}^{*,i}-\mathcal{F} _{0}^{*,i} \right). 
\end{equation}
Now we analysis the entropy produce of source terms. For $\mathbf S_{i_1}^i$, we exactly have
$$
\left( \mathbf{V}_{i_1}^{i} \right) ^T\mathbf{S}_{i_1}^{i}=-\frac{m_{i_1}^{i}}{p_{i_1}^{i}}\rho _{i_1}^{i}\phi _{x,i_1}^{i}+\frac{\rho _{i_1}^{i}}{p_{i_1}^{i}}m_{i_1}^{i}\phi _{x,i_1}^{i}=0,
$$
and similarly, $\left( \mathbf{V}_{i_1}^{e,i} \right) ^T\mathbf{S}_{i_1}^{e,i}=0$. For $\mathbf S^{0,i}_{i_1}$, we can see the extra entropy is balanced with the correction term:
\begin{equation*}\begin{aligned}
\sum_{i_1=0}^k&{\frac{\omega _{i_1}\Delta x}{2}\left( \mathbf{V}_{i_1}^{i} \right) ^T\mathbf{S}_{i_1}^{0,i}}-\sigma _i\frac{\Delta x}{2}\sum_{i_1=0}^k{\omega _{i_1}\left( \mathbf{V}_{i_1}^{i} \right) ^T\left( \mathbf{V}_{i_1}^{i}-\bar{\mathbf{V}}^i \right)}
\\
&=\sum_{i_1=0}^k{\frac{\omega _{i_1}\Delta x}{2}\left( \mathbf{V}_{i_1}^{i} \right) ^T\mathbf{S}_{i_1}^{0,i}}-\sigma _i\frac{\Delta x}{2}\sum_{i_1=0}^k{\omega _{i_1}\left( \mathbf{V}_{i_1}^{i}-\bar{\mathbf{V}}^i \right) ^T\left( \mathbf{V}_{i_1}^{i}-\bar{\mathbf{V}}^i \right)}
\\
&=\sum_{i_1=0}^k{\frac{\omega _{i_1}\Delta x}{2}\left( \mathbf{V}_{i_1}^{i} \right) ^T\mathbf{S}_{i_1}^{0,i}}-\sum_{i_1=0}^k{\frac{\omega _{i_1}\Delta x}{2}\left( \mathbf{V}_{i_1}^{i}-\mathbf{V}_{i_1}^{e,i} \right) ^T\mathbf{S}_{i_1}^{0,i}}
\\
&=\sum_{i_1=0}^k{\frac{\omega _{i_1}\Delta x}{2}\left( \mathbf{V}_{i_1}^{e,i} \right) ^T\mathbf{S}_{i_1}^{0,i}}=\sum_{i_1=0}^k{\sum_{l=0}^k{2S_{i_1,l}\left( \mathbf{V}_{i_1}^{e,i} \right) ^T\mathbf{F}^S\left( \mathbf{U}_{i_1}^{e,i},\mathbf{U}_{l}^{e,i} \right)}}
\\
&=\mathcal{F}^{e,i+1/2}-\mathcal{F}^{e,i-1/2}.
\end{aligned}\end{equation*}
Here, $\mathcal F^{e,i\pm 1/2}=\mathcal F(\mathbf U^e(x_{i\pm 1/2}))$. Notice that 
\begin{equation}\label{eq:ES1D}
\mathcal F(\mathbf U^e(x))=\frac{-m^e(x)s^e(x)}{\gamma - 1}\equiv\mathrm{Const},
\end{equation}
which leads to
$$
\sum_{i_1=0}^k{\frac{\omega _{i_1}\Delta x}{2}\left( \mathbf{V}_{i_1}^{i} \right) ^T\mathbf{S}_{i_1}^{0,i}}-\sigma _i\frac{\Delta x}{2}\sum_{i_1=0}^k{\omega _{i_1}\left( \mathbf{V}_{i_1}^{i} \right) ^T\left( \mathbf{V}_{i_1}^{i}-\bar{\mathbf{V}}^i \right)}=0.
$$
Hence, we have proved that
\begin{equation}\label{eq:ECDG}
\sum_{i_1=0}^k{\frac{\omega _{i_1}\Delta x}{2}\frac{\mathrm{d}\mathcal{U} \left( \mathbf{U}_{i_1}^{i} \right)}{\mathrm{d}t}}=-(\mathcal{F} _{k}^{*,i}-\mathcal{F} _{0}^{*,i}).
\end{equation}
Moreover, for an entropy stable flux $\hat{\mathbf{F}}$, \cite{chen2017entropy} demonstrated that the following inequality holds at each cell interface:
\begin{equation}\label{eq:FhatES}
-(\mathcal{F} _{k}^{*,i}-\mathcal{F} _{0}^{*,i+1})=\left( \psi _{k}^{i}-\psi _{0}^{i+1} \right) -\left( \mathbf{V}_{k}^{i}-\mathbf{V}_{0}^{i+1} \right) ^T\hat{\mathbf{F}}_{i+1/2}\le 0.
\end{equation}
Summing \eqref{eq:ECDG} over $i$ yields
$$
\sum_{i=1}^N{\sum_{i_1=0}^k{\frac{\omega_{i_1}\Delta x}{2}\frac{\mathrm{d}\mathcal{U} \left( \mathbf{U}_{i_1}^{i} \right)}{\mathrm{d}t}}}\le 0.
$$
    
\end{proof}

\begin{Thm}[Positivity-preserving]\label{thm:pp}
The fully-discrete scheme \eqref{eq:EF0} is positivity-preserving, i.e. 
\begin{equation}
\mathbf U_{i_1}^{i,n}\in\mathscr G\quad\Rightarrow\quad 
\bar{\mathbf U}^{i,n+1}\in\mathscr G
\end{equation}
under the time step restriction
\begin{equation}\label{eq:CFLPP}
\Delta t <\min \left\{ \frac{\omega _0}{4\alpha _0}\Delta x,\min_{i,i_1} \left\{ t^{s,i}_{i_1} \right\} \right\},\quad \alpha_0=\max\limits_{\Omega}\left\{ \left|u\right|+c \right\}, 
\end{equation}
with $\mathbf S_{i_1}^{0,i} =: \left( \Theta_{1,i_1}^i,\ \Theta_{2,i_1}^i,\ \Theta_{3,i_1}^i \right)^T$ and 
\begin{equation}\label{eq:ts1D}
t_{i_1}^{s,i}=\begin{cases}
\displaystyle \frac{1}{2}\min\left\{\frac{B+\sqrt{B^2+4AC}}{2A},\frac{(1-K)\rho^{i}_{i_1}}{\left|\Theta_{1,i_1}^i\right|}\right\}, \quad \text{if} \quad \Theta_{1,i_1}^i\neq0, \\
\displaystyle \frac{B+\sqrt{B^2+4AC}}{4A}, \hspace{4.5cm} \text{if} \quad \Theta_{1,i_1}^i=0.\\
\end{cases}
\end{equation}
Omit the subscripts $i,i_1$, the parameters at the grid $i,i_1$ are given by
\begin{equation}\begin{aligned}\label{eq:pppara}
A=&\ \frac{1}{2K\rho}(\Theta_{2}-\rho\phi_{x})^2,
\\B=&\  \Theta_3-m\phi_x-\frac{1}{K}(u\Theta_2-m\phi_x),
\\C=&\ \frac{p}{\gamma - 1}-\frac{1-K}{2K\rho}m^2,
\\K=&\ \frac{1+\tilde K}{2},\quad \tilde K=\frac{(\gamma-1)m^2}{(\gamma-1)m^2+\rho p}.
\end{aligned}\end{equation}
\end{Thm}

\begin{proof}
Notice that 
$$\sum\limits_{i_1=0}^k\frac{\omega_{i_1}}{2}\mathbf S^{corr,i}_{i_1} = 0.$$
Hence, the cell average satisfies
$$
\bar{\mathbf{U}}^{i,n+1}-\bar{\mathbf{U}}^{i,n}=-\frac{\Delta t}{\Delta x}\left(\hat{\mathbf{F}}_{i+1/2}-\hat{\mathbf{F}}_{i-1/2}\right)+\sum_{i_1=0}^k{\frac{\omega _{i_1}}{2}\Delta t\left(\mathbf{S}_{i_1}^{i}+\mathbf S_{i_1}^{0,i}\right)}.
$$
Following the ideal in \cite{zhang2011positivity}, we can rewrite it as the convex combination of following terms:
\begin{equation}\label{eq:comb}\begin{aligned}
\bar{\mathbf{U}}^{i,n+1}&=\frac{\omega _0}{4} \left\{\mathbf{U}_{0}^{i,n}-\frac{4}{\omega_0}\lambda \left[ \hat{\mathbf{F}}\left( \mathbf{U}_{0}^{i,n},\mathbf{U}_{k}^{i,n} \right) -\hat{\mathbf{F}}\left( \mathbf{U}_{k}^{i-1,n},\mathbf{U}_{0}^{i,n} \right) \right] \right\} 
\\
&\quad +\frac{\omega _k}{4}\left\{ \mathbf{U}_{k}^{i,n}- \frac{4}{\omega_k} \lambda \left[ \hat{\mathbf{F}}\left( \mathbf{U}_{k}^{i,n},\mathbf{U}_{0}^{i+1,n} \right) -\hat{\mathbf{F}}\left( \mathbf{U}_{0}^{i,n},\mathbf{U}_{k}^{i,n} \right) \right] \right\} \\
&\quad +\sum_{i_1=1}^{k-1}{\frac{\omega_{i_1}}{4} \mathbf{U}_{i_1}^{i,n}}
+ \sum_{i_1=0}^k{\frac{\omega _{i_1}}{4} \left(\mathbf U_{i_1}^i+2\Delta t\left(\mathbf{S}_{i_1}^{i}+\mathbf S_{i_1}^{0,i}\right)\right)}.
\end{aligned}
\end{equation}
According to the convexity of $\mathscr G$, if all these terms belong to $\mathscr G$, we can get $\bar{\mathbf U}^{i,n+1}\in\mathscr G$. It has been proved in \cite{zhang2010positivity} that the first term and second term belong to $\mathscr G$ when
$$ \Delta t<\frac{\omega_0}{4\alpha_0}\Delta x. $$
Therefore, the main difficulty is to show that
\begin{equation}\label{eq:ppsource}\mathbf U_{i_1}^i+2\Delta t\cdot(\mathbf S_{i_1}^i+\mathbf S_{i_1}^{0,i})\in\mathscr G,\quad \forall i,i_1\end{equation}
under the CFL condition \eqref{eq:CFLPP}. Denote $T=2\Delta t$, omit the subscripts $i,i_1$, \eqref{eq:ppsource} yields
\begin{equation}\label{eq:ppeq0}
\rho+\Theta_1T>0,
\end{equation}
\begin{equation}\label{eq:ppeq1}
\mathcal{E} +\left( \Theta _3-m\phi _x \right) T-\frac{\left( m+\left( \Theta _2-\rho \phi _x \right) T \right) ^2}{2(\rho+\Theta_1T)}>0.
\end{equation}
According to the second condition of \eqref{eq:CFLPP}, we have $\rho+\Theta_1\Delta t>K\rho>0$, i.e. \eqref{eq:ppeq0} holds. Moreover, from \eqref{eq:pppara} we can see $\tilde K\in[0,1)$, hence $K\in [1/2,1)$. Therefore, a sufficient condition of \eqref{eq:ppeq1} is
\begin{equation}\label{eq:ppeq2}
\mathcal{E} +\left( \Theta _3-m\phi _x \right) T-\frac{\left( m+\left( \Theta _2-\rho \phi _x \right) T \right) ^2}{2K\rho}>0.
\end{equation}
The above inequality is equivalent to
\begin{equation}
\label{eq:ppeq3}
-AT^2+BT+C>0. 
\end{equation}
It is clear that $A>0$. On the other hand, if $m=0$, then it is obvious that $C>0$. If $m\ne 0$, notice that the definition of $\tilde K$ ensures 
$$ \frac{1}{\tilde K}-1=\frac{p\rho }{(\gamma - 1)m^2 }. $$
In particular, $1>K> \tilde K>0$, so we have 
$$ \frac{1-K}{K\rho}m^2=\left(\frac{1}{K}-1\right)\frac{m^2}{\rho}<\left(\frac{1}{\tilde K}-1\right)\frac{m^2}{\rho}=\frac{p}{\gamma - 1}, $$
which yields
$$C= \frac{p}{\gamma-1}-\frac{1-K}{2K\rho}m^2> \frac{p}{2(\gamma-1)}>0. $$
Hence, the positive solution of \eqref{eq:ppeq3} is 
$$ T<\frac{B+\sqrt{B^2+4AC}}{2A}. $$
Thus, \eqref{eq:ppeq1} also holds under the CFL condition \eqref{eq:CFLPP}, then we have \eqref{eq:ppsource}. 
\end{proof}

\begin{Remark}
The theorem ensures that the cell average computed by \eqref{eq:EF0} belongs to $\mathscr G$. Hence, by applying the scaling-based PP limiter \eqref{pp}, we ensure $\mathbf U^{i,n+1}_{i_1}\in\mathscr G$ for all $0\le i_1\le k$.
\end{Remark}

\begin{Remark}
In numerical experiments, we observe that $t_{i_1}^{s,i}$ is an $\mathcal O(1)$ term as $\Delta x\to 0$, and hence the restriction imposed by the source term $\Delta t<\min\limits_{i,i_1}\{t_{i_1}^{s,i}\}$ is much weaker than the flux term $\Delta t<\omega_0\Delta x/4\alpha_0$.
\end{Remark}

\section{Structure preserving nodal DG method in two dimensions}
\label{sec4}

In the 2D case, the Euler equation with gravity can be written as
\begin{equation}\label{eq:EulerG-2D}
\left[ \begin{array}{c}
	\rho\\
	m\\
	n\\
	\mathcal{E}\\
\end{array} \right] _t+\left[ \begin{array}{c}
	m\\
	\rho u^2+p\\
	\rho uv\\
	u\left( \mathcal{E} +p \right)\\
\end{array} \right] _x+\left[ \begin{array}{c}
	n\\
	\rho uv\\
	\rho v^2+p\\
	v\left( \mathcal{E} +p \right)\\
\end{array} \right] _y=\left[ \begin{array}{c}
	0\\
	-\rho \phi _x\\
	-\rho \phi _y\\
	-m\phi _x-n\phi _y\\
\end{array} \right],
\end{equation}
with $m=\rho u,\ n=\rho v$. The equation is denoted by
\begin{equation}\label{eq:euler2Dcomp} 
\mathbf U_t+\mathbf F(\mathbf U)_x+\mathbf G(\mathbf U)_y=\mathbf S(\mathbf U,x,y). 
\end{equation}
The entropy flux components $\mathcal F_1,\ \mathcal F_2$ referenced in \eqref{eq:entropy} are hereafter written as $\mathcal F,\ \mathcal G$, and the potential flux terms  $\psi_1,\ \psi_2$ from \eqref{eq:psi} are expressed as $\psi_F,\ \psi_G$, respectively. 
Assume that the 2D computational domain $\Omega=[a,b]\times[c,d]$ is divided into an $N_x\times N_y$ uniform rectangular cells $\mathcal K=\{K_{ij}=[x_{i-1/2},x_{i+1/2}]\times [y_{j-1/2},y_{j+1/2}]\}$ with cell center $(x_{i},y_j)$. 
The mesh sizes in $x$- and $y$-direction are represented by $\Delta x$ and $\Delta y$, respectively, where $\Delta x=x_{i+1/2}-x_{i-1/2}$ and $\Delta y =y_{j+1/2}-y_{j-1/2}$.

\subsection{Proposed scheme}

For the 2D case, the equilibrium satisfies
$ \mathbf F(\mathbf U^e)_x+\mathbf G(\mathbf U^e)_y =\mathbf S(\mathbf U^e,x,y).$ 
First, we rewrite the governing equation \eqref{eq:euler2Dcomp} by
$$\mathbf U_t+\mathbf F(\mathbf U)_x+\mathbf G(\mathbf U)_y=\mathbf S(\mathbf U,x,y)+\mathbf F(\mathbf U^e)_x+\mathbf G(\mathbf U^e)_y-\mathbf S(\mathbf U^e,x,y).$$
We introduce the following shorthand notations
$$
x_i\left( X \right) = x_{i} +\frac{\Delta x}{2}X,\quad 
y_j\left( Y \right) = y_{j} +\frac{\Delta y}{2}Y, $$
$$	\mathbf{U}_{i_1,j_1}^{i,j}=\mathbf{U}_h\left( x_i\left( X_{i_1} \right) ,y_j\left( Y_{j_1} \right) \right) ,\quad 
\mathbf{F}_{i_1,j_1}^{i,j}=\mathbf{F}\left( \mathbf{U}_{i_1,j_1}^{i,j} \right) ,\quad 
\mathbf{G}_{i_1,j_1}^{i,j}=\mathbf{G}\left( \mathbf{U}_{i_1,j_1}^{i,j} \right), 
$$
$$
    \mathbf{F}_{i_1,j_1}^{*,i,j}=\left\{ \begin{array}{ll}
    \mathbf{\hat{F}}\left( \mathbf U^{i-1,j}_{k,j_1},\mathbf U^{i,j}_{0,j_1} \right)=:\hat{\mathbf F}^{i-1/2,j}_{j_1} ,& i_1=0,\\	
    \mathbf{0},& 0<i_1<k,\\
    \mathbf{\hat{F}}\left( \mathbf U^{i,j}_{k,j_1},\mathbf U^{i+1,j}_{0,j_1} \right)=:\hat{\mathbf F}^{i+1/2,j}_{j_1} ,& i_1=k,\\	
\end{array}\right.
$$
$$
    \mathbf{G}_{i_1,j_1}^{*,i,j}=\left\{ \begin{array}{ll}
	\mathbf{\hat{G}}\left( \mathbf U^{i,j-1}_{i_1,k},\mathbf U^{i,j}_{i_1,0} \right)=:\hat{\mathbf G}^{i,j-1/2}_{i_1} ,& j_1=0,\\	
        \mathbf{0},& 0<j_1<k,\\
	\mathbf{\hat{G}}\left( \mathbf U^{i,j}_{i_1,k},\mathbf U^{i,j+1}_{i_1,0} \right)=:\hat{\mathbf G}^{i,j+1/2}_{i_1} ,& j_1=k.\\	
	\end{array}\right.
	$$
The proposed 2D nodal DG scheme is given by
\begin{equation}\label{eq:scheme2D}
\begin{aligned}
	\frac{\mathrm{d}\mathbf{U}_{i_1,j_1}^{i,j}}{\mathrm{d}t}=&-\frac{2}{\Delta x}\sum_{l=0}^k{2D_{i_1,l}}\mathbf{F}^S\left( \mathbf{U}_{i_1,j_1}^{i,j},\mathbf{U}_{l,j_1}^{i,j} \right) +\frac{2}{\Delta x}\frac{\tau _{i_1}}{\omega _{i_1}}\left( \mathbf{F}_{i_1,j_1}^{i,j}-\mathbf{F}_{i_1,j_1}^{*,i,j} \right)\\
	&-\frac{2}{\Delta y}\sum_{l=0}^k{2D_{j_1,l}}\mathbf{G}^S\left( \mathbf{U}_{i_1,j_1}^{i,j},\mathbf{U}_{i_1,l}^{i,j} \right) +\frac{2}{\Delta y}\frac{\tau _{j_1}}{\omega _{j_1}}\left( \mathbf{G}_{i_1,j_1}^{i,j}-\mathbf{G}_{i_1,j_1}^{*,i,j} \right)\\
	&+\mathbf{S}_{i_1,j_1}^{i,j}+\left( \mathbf{S}^0 \right) _{i_1,j_1}^{i,j}-\left( \mathbf{S}^{corr} \right) _{i_1,j_1}^{i,j},\\
\end{aligned}
\end{equation}
where
\begin{align*}
&\mathbf{S}_{i_1,j_1}^{i,j}=\left( 0,-\rho _{i_1,j_1}^{i,j}\phi _{x,i_1,j_1}^{i,j},-\rho _{i_1,j_1}^{i,j}\phi _{y,i_1,j_1}^{i,j},-m_{i_1,j_1}^{i,j}\phi _{x,i_1,j_1}^{i,j}-n_{i_1,j_1}^{i,j}\phi _{y,i_1,j_1}^{i,j} \right) ^T ,\\
&\mathbf S_{i_1,j_1}^{e,i,j}=\left( 0,-\rho _{i_1,j_1}^{e,i,j}\phi _{x,i_1,j_1}^{i,j},-\rho _{i_1,j_1}^{e,i,j}\phi _{y,i_1,j_1}^{i,j},-m_{i_1,j_1}^{e,i,j}\phi _{x,i_1,j_1}^{i,j}-n_{i_1,j_1}^{e,i,j}\phi _{y,i_1,j_1}^{i,j} \right) ^T,
\\  
&\mathbf{S}_{i_1,j_1}^{0,i,j}=\frac{2}{\Delta x}\sum_{l=0}^k{2D_{i_1,l}\mathbf{F}^S\left( \mathbf{U}_{i_1,j_1}^{e,i,j},\mathbf{U}_{l,j_1}^{e,i,j} \right)}+\frac{2}{\Delta y}\sum_{l=0}^k{2D_{j_1,l}\mathbf{G}^S\left( \mathbf{U}_{i_1,j_1}^{e,i,j},\mathbf{U}_{i_1,l}^{e,i,j} \right)}-\mathbf S^{e,i,j}_{i_1,j_1},
\\
&\mathbf{S}_{i_1,j_1}^{corr,i,j}=\sigma _{ij}\left( \mathbf{V}_{i_1,j_1}^{i,j}-\bar{\mathbf{V}}^{i,j} \right),\quad \bar{\mathbf{V}}^{i,j}=\sum\limits_{i_1,j_1=0}^{k}\frac{\omega_{i_1}\omega_{j_1}}{4}\mathbf V_{i_1,j_1}^{i,j},
\\
& \sigma _{ij}=\frac{\displaystyle\sum_{i_1,j_1=0}^k{\omega _{i_1}\omega _{j_1}\left( \mathbf{V}_{i_1,j_1}^{i,j}-\mathbf{V}_{i_1,j_1}^{e,i,j} \right) ^T\mathbf{S}_{i_1,j_1}^{0,i,j}}}{\displaystyle\sum_{i_1,j_1=0}^k{\omega _{i_1}\omega _{j_1}\left\| \mathbf{V}_{i_1,j_1}^{i,j}-\bar{\mathbf{V}}^{i,j} \right\| ^2}}.
\end{align*}

Same to the 1D case, coupling with the Euler forward time discretization, the fully-discrete scheme is given by
\begin{equation}\label{eq:EF2D} 
\mathbf U^{n+1}_h = \Pi_h^{2D}\left(\mathbf U_h^n+\Delta t\cdot\mathcal L_h(\mathbf U_h^n)\right).
\end{equation}
Here, $\Pi_h^{2D}$ is the 2D PP limiter, which corrects the density and pressure at each Gauss–Lobatto point to ensure positivity. The definition of $\Pi^{2D}_h$ is similar to $\Pi_h$ in 1D \eqref{pp}. The only difference is that the stencil points on $K_{ij}$ are $\{(X_i(x_{i_1}),Y_j(y_{j_1}))\}_{i_1,j_1=0}^{k}$, which is the tensor product of 1D stencil points along $x$- and $y$-directions.

\subsection{Theoretical properties}

Below, we summarize the theoretical properties of the scheme in 2D. The proofs of the following theorems are analogous to those in the 1D case and are omitted for brevity.

\begin{Thm}[Mass-conservation]
The scheme \eqref{eq:scheme2D} conserves the mass.
\end{Thm}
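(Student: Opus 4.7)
The plan is to mirror the 1D proof: integrate the semi-discrete equation \eqref{eq:scheme2D} against the tensor-product Gauss--Lobatto weights $\omega_{i_1}\omega_{j_1}/4$ on a cell $K_{ij}$, invoke the SBP identity \eqref{eq:SBP1}--\eqref{eq:SBP2} in each coordinate direction to telescope the volume flux sums into interface contributions, check that the correction term and the $x$-derivative of the gravitational potential do not pollute the mass component, and finally sum over all cells so that the remaining interface fluxes cancel under periodic or compactly supported boundary conditions.

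First, I would multiply \eqref{eq:scheme2D} by $\frac{\omega_{i_1}\omega_{j_1}\Delta x\Delta y}{4}$ and sum over $0\le i_1,j_1\le k$. Following exactly the 1D computation, the SBP property gives
\begin{equation*}
\sum_{i_1,j_1=0}^{k}\sum_{l=0}^k 2S_{i_1,l}\mathbf{F}^S(\mathbf{U}_{i_1,j_1}^{i,j},\mathbf{U}_{l,j_1}^{i,j})\;\omega_{j_1}
=\sum_{j_1=0}^k\omega_{j_1}\bigl(\mathbf{F}^{i,j}_{k,j_1}-\mathbf{F}^{i,j}_{0,j_1}\bigr),
\end{equation*}
which together with the interface correction $\frac{\tau_{i_1}}{\omega_{i_1}}(\mathbf{F}_{i_1,j_1}^{i,j}-\mathbf{F}_{i_1,j_1}^{*,i,j})$ collapses the $x$-direction contributions to
$\sum_{j_1=0}^k\omega_{j_1}(\hat{\mathbf{F}}^{i+1/2,j}_{j_1}-\hat{\mathbf{F}}^{i-1/2,j}_{j_1})$; the $y$-direction is handled symmetrically. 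The analogous identity with $\mathbf{U}^{e,i,j}$ in place of $\mathbf{U}^{i,j}$ shows that $\sum_{i_1,j_1}\tfrac{\omega_{i_1}\omega_{j_1}}{4}\mathbf{S}^{0,i,j}_{i_1,j_1}$ equals a combination of equilibrium boundary fluxes $\mathbf{F}^{e}$ and $\mathbf{G}^{e}$ at the four edges of $K_{ij}$, minus the quadrature of $\mathbf{S}^{e,i,j}$.

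Next, I would verify that the correction term averages to zero: by construction $\bar{\mathbf{V}}^{i,j}=\sum_{i_1,j_1}\tfrac{\omega_{i_1}\omega_{j_1}}{4}\mathbf{V}^{i,j}_{i_1,j_1}$, so
\begin{equation*}
\sum_{i_1,j_1=0}^k\frac{\omega_{i_1}\omega_{j_1}}{4}\bigl(\mathbf{V}^{i,j}_{i_1,j_1}-\bar{\mathbf{V}}^{i,j}\bigr)=\mathbf{0},
\end{equation*}
hence $\sum_{i_1,j_1}\tfrac{\omega_{i_1}\omega_{j_1}}{4}\mathbf{S}^{corr,i,j}_{i_1,j_1}=\mathbf{0}$ regardless of $\sigma_{ij}$. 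Restricting now to the first (mass) component: the first entries of $\mathbf{S}^{i,j}_{i_1,j_1}$ and $\mathbf{S}^{e,i,j}_{i_1,j_1}$ are identically zero, and consistency of $\mathbf{F}^S$ and $\mathbf{G}^S$ as entropy-conservative fluxes implies that their first components coincide with the standard symmetric mass fluxes. This yields
\begin{equation*}
\Delta x\Delta y\,\frac{\mathrm d\bar\rho^{i,j}}{\mathrm dt}+\sum_{j_1=0}^k\frac{\omega_{j_1}\Delta y}{2}\bigl(\hat F_{1,j_1}^{i+1/2,j}-\hat F_{1,j_1}^{i-1/2,j}\bigr)+\sum_{i_1=0}^k\frac{\omega_{i_1}\Delta x}{2}\bigl(\hat G_{1,i_1}^{i,j+1/2}-\hat G_{1,i_1}^{i,j-1/2}\bigr)=R^{i,j},
\end{equation*}
where $R^{i,j}$ is the analogous sum of equilibrium interface fluxes $F_1^e,G_1^e$ over $\partial K_{ij}$.

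Finally, summing over all $i,j$ and using periodic or compactly supported boundary conditions, each numerical interface flux $\hat F_{1,j_1}^{i+1/2,j}$ appears with opposite signs in two neighboring cells and so telescopes to zero, and the equilibrium flux terms $F^e_1,G^e_1$ telescope in the same manner; hence $\frac{\mathrm d}{\mathrm dt}\bigl(\Delta x\Delta y\sum_{i,j}\bar\rho^{i,j}\bigr)=0$, proving mass conservation. The only mildly non-routine step is bookkeeping the 2D tensor-product SBP sums and confirming that the $x$- and $y$-direction equilibrium contributions inside $\mathbf{S}^{0,i,j}_{i_1,j_1}$ telescope edge-by-edge just as the numerical fluxes do; once this is in place, the rest of the argument is a direct transcription of the 1D proof.
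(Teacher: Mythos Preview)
Your proposal is correct and follows exactly the route the paper intends: the paper omits the 2D proof entirely, stating only that it is ``analogous to those in the 1D case,'' and your argument is precisely the tensor-product extension of the 1D mass-conservation proof (SBP telescoping in each direction, vanishing cell average of the correction term, zero first component of $\mathbf{S}$ and $\mathbf{S}^e$, then global telescoping of both numerical and equilibrium interface fluxes).
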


\begin{Thm}[Accuracy]
The scheme \eqref{eq:scheme2D} is high-order accurate if $\mathbf{U}$ and $\mathbf{U}^e$ are both sufficiently smooth. In particular, the local truncation error is
\begin{equation}\label{eq:truncation2}
    \mathcal L_h(\mathbf U)-\mathcal L(\mathbf U)=\mathcal O(\Delta x^k+\Delta y^k)+\frac{\mathcal O(\Delta x^{k+1}+\Delta y^{k+1})}{\mathcal O(\Delta x^2+\Delta y^2)}. 
\end{equation} 
\end{Thm}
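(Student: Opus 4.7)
The plan is to follow the 1D accuracy argument in essentially the same structure, exploiting the tensor-product nature of the 2D Gauss--Lobatto grid. Substituting the smooth exact solution $\mathbf U$ together with the equilibrium $\mathbf U^e$ into \eqref{eq:scheme2D}, I would split the residual $\mathcal L_h(\mathbf U)-\mathcal L(\mathbf U)$ into the standard DG flux-consistency errors in $x$ and $y$, the modified source contribution $\mathbf S^{0,i,j}_{i_1,j_1}$, and the entropy correction $\mathbf S^{corr,i,j}_{i_1,j_1}$. Because the operator $\sum_l 2D_{i_1,l}\mathbf F^S(\cdot,\cdot)$ acting in any fixed row $j_1$ and the analogous $y$-sweep acting in any fixed column $i_1$ are each one-dimensional SBP approximations, the consistency estimate of \cite{chen2017entropy} applied directionally produces $\mathbf F(\mathbf U)_x+\mathcal O(\Delta x^k)$ and $\mathbf G(\mathbf U)_y+\mathcal O(\Delta y^k)$ at each node. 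The boundary flux mismatches vanish by the consistency of $\hat{\mathbf F}$ and $\hat{\mathbf G}$ on the continuous smooth $\mathbf U$, and $\mathbf S^{i,j}_{i_1,j_1}=\mathbf S(\mathbf U,x_i(X_{i_1}),y_j(Y_{j_1}))$ exactly.

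Next I would show that $\mathbf S^{0,i,j}_{i_1,j_1}=\mathcal O(\Delta x^k+\Delta y^k)$. Applying the same directional SBP consistency to the interpolant $\mathbf U_h^e$ of the smooth equilibrium gives
$$
\frac{2}{\Delta x}\sum_{l=0}^k 2D_{i_1,l}\mathbf F^S(\mathbf U^{e,i,j}_{i_1,j_1},\mathbf U^{e,i,j}_{l,j_1})+\frac{2}{\Delta y}\sum_{l=0}^k 2D_{j_1,l}\mathbf G^S(\mathbf U^{e,i,j}_{i_1,j_1},\mathbf U^{e,i,j}_{i_1,l})=\mathbf F(\mathbf U^e)_x+\mathbf G(\mathbf U^e)_y+\mathcal O(\Delta x^k+\Delta y^k),
$$
and the equilibrium identity $\mathbf F(\mathbf U^e)_x+\mathbf G(\mathbf U^e)_y=\mathbf S(\mathbf U^e,x,y)$ agrees with $\mathbf S^{e,i,j}_{i_1,j_1}$ exactly at the node, so the residual defining $\mathbf S^{0,i,j}_{i_1,j_1}$ is of the claimed order.

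The main obstacle is controlling the entropy correction $\mathbf S^{corr,i,j}_{i_1,j_1}=\sigma_{ij}(\mathbf V^{i,j}_{i_1,j_1}-\bar{\mathbf V}^{i,j})$, which appears only through a ratio whose numerator and denominator shrink at different rates. Smoothness of $\mathbf V$ and a Taylor expansion about $(x_i,y_j)$ give $\mathbf V^{i,j}_{i_1,j_1}-\bar{\mathbf V}^{i,j}=\mathcal O(\Delta x+\Delta y)$, so the denominator of $\sigma_{ij}$ is $\mathcal O(\Delta x^2+\Delta y^2)$. For the numerator, the tensor-product Gauss--Lobatto quadrature is exact for polynomials of degree at most $2k-1$ in each variable, hence
$$
\sum_{i_1,j_1=0}^k \omega_{i_1}\omega_{j_1}(\mathbf V^{i,j}_{i_1,j_1}-\mathbf V^{e,i,j}_{i_1,j_1})^T\mathbf S^{0,i,j}_{i_1,j_1}=\frac{4}{\Delta x\Delta y}\int_{K_{ij}}(\mathbf V-\mathbf V^e)^T(\mathbf F(\mathbf U^e)_x+\mathbf G(\mathbf U^e)_y-\mathbf S(\mathbf U^e))\mathrm{d}x\mathrm{d}y+\mathcal O(\Delta x^k+\Delta y^k),
$$
and the integral vanishes identically by the equilibrium identity, leaving the numerator of $\sigma_{ij}$ as $\mathcal O(\Delta x^k+\Delta y^k)$. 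Combining the three bounds yields $\mathbf S^{corr,i,j}_{i_1,j_1}=\mathcal O(\Delta x^{k+1}+\Delta y^{k+1})/\mathcal O(\Delta x^2+\Delta y^2)$, and collecting all contributions produces the stated truncation estimate \eqref{eq:truncation2}. The only genuine care beyond the 1D proof is applying the SBP consistency separately in each coordinate direction and invoking product quadrature exactness when reducing the numerator of $\sigma_{ij}$ to the vanishing equilibrium integral.
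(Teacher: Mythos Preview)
Your proposal is correct and follows exactly the approach the paper intends: the paper explicitly omits the 2D proof, stating it is analogous to the 1D accuracy theorem, and your argument is precisely that 2D analogue—directional SBP consistency for the flux and $\mathbf S^{0}$ terms, tensor-product quadrature exactness to reduce the numerator of $\sigma_{ij}$ to the vanishing equilibrium integral, and the $\mathcal O(\Delta x+\Delta y)$ bound on $\mathbf V-\bar{\mathbf V}$ for the correction factor. No substantive deviation from the paper's method.
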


\begin{Thm}[Well-balancedness]
The scheme \eqref{eq:scheme2D} is well-balanced for a general known hydrostatic state solution $\mathbf U^e=(\rho^e,m^e,n^e,\mathcal E^e)$, i.e.
\begin{equation}
\mathcal L_h(\mathbf U_h)=\mathbf 0,\qquad \mathrm{if}\ \ \mathbf U_h=\mathbf U_h^e.
\end{equation}
\end{Thm}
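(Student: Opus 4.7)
The plan is to imitate the 1D argument of Theorem \ref{thm:wb1} almost verbatim, exploiting the tensor-product structure of the 2D nodal DG discretization \eqref{eq:scheme2D}. First, I would substitute $\mathbf U_h=\mathbf U_h^e$ pointwise and check that the purely local terms collapse to their equilibrium counterparts: at each Gauss--Lobatto node $(X_{i_1},Y_{j_1})$ one has $\mathbf U_{i_1,j_1}^{i,j}=\mathbf U_{i_1,j_1}^{e,i,j}$ and $\mathbf V_{i_1,j_1}^{i,j}=\mathbf V_{i_1,j_1}^{e,i,j}$, hence $\mathbf S_{i_1,j_1}^{i,j}=\mathbf S_{i_1,j_1}^{e,i,j}$ and $\mathbf F^S(\mathbf U_{i_1,j_1}^{i,j},\mathbf U_{l,j_1}^{i,j})=\mathbf F^S(\mathbf U_{i_1,j_1}^{e,i,j},\mathbf U_{l,j_1}^{e,i,j})$, and analogously for $\mathbf G^S$ in the $y$-direction.

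Next I would dispose of the entropy correction: since $\mathbf V_{i_1,j_1}^{i,j}-\mathbf V_{i_1,j_1}^{e,i,j}=\mathbf 0$, the numerator defining $\sigma_{ij}$ vanishes, so by the convention spelled out in the 1D case (or directly from the formula whenever the denominator is nonzero), $\mathbf S_{i_1,j_1}^{corr,i,j}=\mathbf 0$. The slightly more delicate step is the interface contribution. Here I would use that $\mathbf U_h^e$ is the tensor-product Gauss--Lobatto interpolant of the smooth equilibrium $\mathbf U^e$, so its trace along the edge $x=x_{i+1/2}$ (respectively $y=y_{j+1/2}$) agrees with the trace from the neighboring cell because both coincide with $\mathbf U^e$ evaluated at the shared boundary nodes $(x_{i+1/2},y_j(Y_{j_1}))$ (respectively $(x_i(X_{i_1}),y_{j+1/2}))$. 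Consistency of $\hat{\mathbf F}$ and $\hat{\mathbf G}$ then yields $\mathbf F_{i_1,j_1}^{*,i,j}=\mathbf F_{i_1,j_1}^{i,j}$ for $i_1\in\{0,k\}$ and $\mathbf G_{i_1,j_1}^{*,i,j}=\mathbf G_{i_1,j_1}^{i,j}$ for $j_1\in\{0,k\}$, so both boundary-correction brackets in \eqref{eq:scheme2D} disappear.

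Finally I would close the argument by a direct cancellation. Substituting all of the above into \eqref{eq:scheme2D} and using the very definition of $\mathbf S_{i_1,j_1}^{0,i,j}$, the volume flux differential operators in the two coordinate directions combine to exactly $\mathbf S_{i_1,j_1}^{0,i,j}+\mathbf S_{i_1,j_1}^{e,i,j}$, which is then cancelled by the $+\mathbf S_{i_1,j_1}^{i,j}+\mathbf S_{i_1,j_1}^{0,i,j}-\mathbf S_{i_1,j_1}^{corr,i,j}$ contribution (using $\mathbf S^i=\mathbf S^{e,i}$ and $\mathbf S^{corr}=\mathbf 0$). This yields $\mathcal L_h(\mathbf U_h^e)=\mathbf 0$.

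The main obstacle I anticipate is purely bookkeeping rather than conceptual: one must be careful that the definition of $\mathbf S^{0,i,j}$ in 2D has been designed precisely so that the volume contributions of $\mathbf F^S$ and $\mathbf G^S$ at equilibrium cancel against $\mathbf S^{e,i,j}+\mathbf S^{0,i,j}$ simultaneously in both coordinate directions, and that the tensor-product interpolation really does place nodes on every edge of $K_{ij}$ (which is why Gauss--Lobatto points, as opposed to Gauss points, are essential for continuity across edges and hence for the cancellation of the surface terms). Once these two observations are checked, the rest is identical to the 1D proof.
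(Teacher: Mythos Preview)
Your proposal is correct and is precisely the approach the paper intends: the paper explicitly omits this proof, stating that it is analogous to the 1D case (Theorem~\ref{thm:wb1}), and your argument is the natural 2D adaptation of that proof. The key observations you highlight---continuity of $\mathbf U_h^e$ across edges thanks to the tensor-product Gauss--Lobatto interpolation, vanishing of the numerator of $\sigma_{ij}$, and the cancellation built into the definition of $\mathbf S^{0,i,j}$---are exactly the ingredients needed.
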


\begin{Thm}[Entropy stability] 
Assume the boundaries are periodic or compact supported. The scheme \eqref{eq:scheme2D} is entropy conservative in a single element in the sense of
\begin{equation} \label{eq:entropy_conservative}
        \begin{aligned}
		&\frac{\mathrm{d}}{\mathrm{d}t}\left( \frac{\Delta x\Delta y}{4}\sum_{i_1,j_1=0}^{k}{\omega _{i_1}\omega _{j_1}\mathcal U(\mathbf U_{i_1,j_1}^{i,j}}) \right) \\&\quad 
        =-\frac{\Delta y}{2}\sum_{j_1=0}^{k}{\omega _{j_1}\left( \mathcal{F} _{k,j_1}^{*,i,j}-\mathcal{F} _{0,j_1}^{*,i,j} \right)}
        +\frac{\Delta y}{2}\sum_{j_1=0}^{k}{\omega _{j_1}\left( \mathcal{F} _{j_1}^{e,i+1/2,j}-\mathcal{F} _{j_1}^{e,i-1/2,j} \right)}
        \\&\quad\quad 
        -\frac{\Delta x}{2}\sum_{i_1=0}^{k}{\omega _{i_1}\left( \mathcal{G} _{i_1,k}^{*,i,j}-\mathcal{G} _{i_1,0}^{*,i,j} \right)}
        +\frac{\Delta x}{2}\sum_{i_1=0}^{k}{\omega _{i_1}\left( \mathcal{G} _{i_1}^{e,i,j+1/2}-\mathcal{G} _{i_1}^{e,i,j-1/2} \right)},
        \end{aligned}
    \end{equation}
    and entropy stable in the sense of
    \begin{equation}\label{eq:entropy_stable}
		\frac{\mathrm d}{\mathrm dt}\mathcal U^{tot}(\mathbf U_h)= \frac{\mathrm{d}}{\mathrm{d}t}\left( \frac{\Delta x\Delta y}{4}\sum_{i,j=1}^{N_x,N_y}{\sum_{i_1,j_1=0}^{k}{\omega _{i_1}\omega _{j_1}\mathcal U(\mathbf U_{i_1,j_1}^{i,j}})} \right) \le 0,
    \end{equation}
    where
    \begin{align*}
		&\mathcal{F} _{k,j_1}^{*,i,j}=\left( \left( \mathbf{V}_{k,j_1}^{i,j} \right) ^T\mathbf{F}_{k,j_1}^{*,i,j}-\psi _{F,k,j_1}^{i,j} \right),\quad 
		\mathcal{F} _{0,j_1}^{*,i,j}=\left( \left( \mathbf{V}_{0,j_1}^{i,j} \right) ^T\mathbf{F}_{0,j_1}^{*,i,j} -\psi _{F,0,j_1}^{i,j}\right),
		\\
		&\mathcal{G} _{i_1,k}^{*,i,j}=\left( \left( \mathbf{V}_{i_1,k}^{i,j} \right) ^T\mathbf{G}_{i_1,k}^{*,i,j} -\psi _{G,i_1,k}^{i,j}\right),\quad \mathcal{G} _{i_1,0}^{*,i,j}=\left(\left( \mathbf{V}_{i_1,0}^{i,j} \right) ^T\mathbf{G}_{i_1,0}^{*,i,j}- \psi _{G,i_1,0}^{i,j} \right),
        \\&\mathcal F_{j_1}^{e,i\pm1/2,j}=\mathcal F(\mathbf U^{e}(x_{i\pm 1/2},y_j(Y_{j_1})),\quad\,\,  \mathcal G_{i_1}^{e,i,j\pm1/2}=\mathcal G(\mathbf U^e(x_i(X_{i_1}),y_{j\pm1/2})).
    \end{align*}
\end{Thm}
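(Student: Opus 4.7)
My plan is to mirror the 1D entropy-stability proof cell-by-cell, with two extra bookkeeping layers to handle the second spatial direction and the fact that, in 2D, the equilibrium entropy fluxes $\mathcal F(\mathbf U^e)$ and $\mathcal G(\mathbf U^e)$ are in general \emph{not} locally constant. I would start by left-multiplying \eqref{eq:scheme2D} by $(\mathbf V_{i_1,j_1}^{i,j})^T$, multiplying by $\omega_{i_1}\omega_{j_1}\Delta x\Delta y/4$, and summing over $i_1,j_1=0,\dots,k$ in a fixed cell $K_{ij}$. The left-hand side produces $\frac{\mathrm d}{\mathrm dt}\bigl(\frac{\Delta x\Delta y}{4}\sum_{i_1,j_1}\omega_{i_1}\omega_{j_1}\mathcal U(\mathbf U_{i_1,j_1}^{i,j})\bigr)$, and the right-hand side splits into six pieces: the $x$-volume and $x$-surface flux terms; the $y$-volume and $y$-surface flux terms; the physical source $\mathbf S_{i_1,j_1}^{i,j}$; and the pair $\mathbf S^{0,i,j}_{i_1,j_1}$ and $\mathbf S^{corr,i,j}_{i_1,j_1}$.

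For the two flux blocks I would apply the SBP-based identity from \cite{chen2017entropy} \emph{one dimension at a time}, keeping the other index frozen. This reduces the $x$-block, after weighting by $\omega_{j_1}\Delta y/2$ and summing in $j_1$, to $-\frac{\Delta y}{2}\sum_{j_1}\omega_{j_1}(\mathcal F_{k,j_1}^{*,i,j}-\mathcal F_{0,j_1}^{*,i,j})$, and similarly for the $y$-block. For the physical source, the trivial pointwise identity
\begin{equation*}
(\mathbf V_{i_1,j_1}^{i,j})^T\mathbf S_{i_1,j_1}^{i,j}
=\tfrac{\rho u}{p}(-\rho\phi_x)+\tfrac{\rho v}{p}(-\rho\phi_y)-\tfrac{\rho}{p}(-m\phi_x-n\phi_y)=0
\end{equation*}
already used in Section~\ref{sec2} disposes of $\mathbf S$, and the same identity for the equilibrium state gives $(\mathbf V_{i_1,j_1}^{e,i,j})^T\mathbf S_{i_1,j_1}^{e,i,j}=0$, which will be needed in the next step.

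The key block is the coupling between $\mathbf S^0$ and $\mathbf S^{corr}$. Following the 1D calculation verbatim, the telescoping identity $\sum_{i_1,j_1}\omega_{i_1}\omega_{j_1}(\mathbf V_{i_1,j_1}^{i,j}-\bar{\mathbf V}^{i,j})=\mathbf 0$ lets me replace $\mathbf V_{i_1,j_1}^{i,j}$ by $\mathbf V_{i_1,j_1}^{i,j}-\bar{\mathbf V}^{i,j}$ inside $(\mathbf V_{i_1,j_1}^{i,j})^T\mathbf S^{corr,i,j}_{i_1,j_1}$, so the definition of $\sigma_{ij}$ collapses the corrector against the piece $\sum(\mathbf V^i_{i_1,j_1}-\mathbf V^{e,i,j}_{i_1,j_1})^T\mathbf S^{0,i,j}_{i_1,j_1}$ in $\mathbf S^0$, leaving only $\sum\omega_{i_1}\omega_{j_1}(\mathbf V^{e,i,j}_{i_1,j_1})^T\mathbf S^{0,i,j}_{i_1,j_1}$. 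Substituting the definition of $\mathbf S^{0,i,j}$ and applying the entropy-conservation property \eqref{eq:ESflux} of $\mathbf F^S,\mathbf G^S$ dimension-wise at the equilibrium state (again combined with the SBP identity from \cite{chen2017entropy} and $(\mathbf V^e)^T\mathbf S^e=0$), this reduces to the equilibrium entropy-flux difference
\begin{equation*}
\tfrac{\Delta y}{2}\!\sum_{j_1}\omega_{j_1}\bigl(\mathcal F^{e,i+1/2,j}_{j_1}-\mathcal F^{e,i-1/2,j}_{j_1}\bigr)
+\tfrac{\Delta x}{2}\!\sum_{i_1}\omega_{i_1}\bigl(\mathcal G^{e,i,j+1/2}_{i_1}-\mathcal G^{e,i,j-1/2}_{i_1}\bigr).
\end{equation*}
Collecting the six pieces yields exactly the per-element identity \eqref{eq:entropy_conservative}.

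The main obstacle, and the real difference from 1D, is proving the global inequality \eqref{eq:entropy_stable}: in 1D I used $\mathcal F(\mathbf U^e)\equiv\mathrm{Const}$ to kill the equilibrium-flux residual cell-by-cell, but in 2D this equality only holds along streamlines of the moving equilibrium, so the residual cannot be eliminated locally. The remedy is to sum \eqref{eq:entropy_conservative} over all cells first: the equilibrium fluxes $\mathcal F^{e,i+1/2,j}_{j_1}$ and $\mathcal G^{e,i,j+1/2}_{i_1}$ are evaluated from the single-valued smooth function $\mathbf U^e$ at shared interfaces, so adjacent-cell contributions cancel exactly and the global sum telescopes to boundary terms that vanish under periodic or compact-support boundary conditions. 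What remains is the sum over interior interfaces of the standard numerical-flux contribution $-(\mathcal F^{*,i,j}_{k,j_1}-\mathcal F^{*,i+1,j}_{0,j_1})$ (and analogously for $y$), each of which I bound by the entropy-stable flux inequality \eqref{eq:FhatES} applied in each normal direction, giving a non-positive total and hence \eqref{eq:entropy_stable}.
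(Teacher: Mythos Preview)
Your proposal is correct and follows exactly the approach the paper intends: the paper omits the 2D proof as ``analogous to the 1D case,'' and its subsequent Remark explicitly confirms your key observation that $\mathcal F(\mathbf U^e)$ and $\mathcal G(\mathbf U^e)$ are no longer constant in 2D, so the equilibrium-flux terms survive at the cell level but telescope away upon summing over all $i,j$. Your handling of the $\mathbf S^0$/$\mathbf S^{corr}$ coupling and the dimension-wise SBP identity is precisely the 1D argument carried over, as the paper indicates.
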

\begin{Remark}
A notable distinction from the 1D case arises: the exact equilibrium no longer satisfies $\mathcal F(\mathbf U^e)\equiv \mathrm{Const}$ and $\mathcal G(\mathbf U^e)\equiv\mathrm{Const}$. As a result, the entropy conservative equation differs from \eqref{eq:ECDG}. Nevertheless, the entropy stability property \eqref{eq:entropy_stable} still holds, as the additional terms vanish when summed over $i$ and $j$.
\end{Remark}

\begin{Thm}[Positivity-preserving]
The fully-discrete scheme \eqref{eq:EF2D} is positivity-preserving, i.e. 
\begin{equation}\mathbf U_{i_1,j_1}^{i,j,n}\in\mathscr G\quad\Rightarrow\quad 
\bar{\mathbf U}^{i,j,n+1}\in\mathscr G
\end{equation}
under the time step restriction
\begin{equation}\label{eq:CFLPP2D}
\Delta t <\min \left\{ \frac{\omega _0}{8\alpha _{x,0}}\Delta x,\frac{\omega_0}{8\alpha_{y,0}}\Delta y,\min_{i,j,i_1,j_1} \left\{ t^{s,i,j}_{i_1,j_1} \right\} \right\},
\end{equation}
with
$$\alpha_{x,0}=\max\limits_{\Omega}\left\{ \left|u\right|+c \right\},\quad \alpha_{y,0}=\max\limits_{\Omega}\left\{ \left|v\right|+c \right\}, $$
\begin{equation*}\label{eq:ts2D}
t_{i_1,j_1}^{s,i,j}=\begin{cases}
    \displaystyle \frac{1}{2}\min\left\{\frac{B+\sqrt{B^2+4AC}}{2A},\frac{(1-K)\rho^{i,j}_{i_1,j_1}}{\left|\Theta_{1,i_1,j_1}^{i,j}\right|}\right\}, 
    \quad \text{if}\quad \Theta_{1,i_1,j_1}^{i,j}\neq0,\\
    \displaystyle \frac{B+\sqrt{B^2+4AC}}{4A}, \hspace{4.8cm} \text{if}\quad \Theta_{1,i_1,j_1}^{i,j}=0.
\end{cases}
\end{equation*}
where $\mathbf S_{i_1,j_1}^{0,i,j} =: \left( \Theta_{1,i_1,j_1}^{i,j},\ \Theta_{2,i_1,j_1}^{i,j},\ \Theta_{3,i_1,j_1}^{i,j},\ \Theta_{4,i_1,j_1}^{i,j} \right)^T$. Omit $i,j,i_1,j_1$, the parameters are given by
\begin{equation*}\begin{aligned}
A =&\  
\frac{\left( \Theta _2-\rho \phi _x \right) ^2+\left( \Theta _3-\rho \phi _y \right) ^2}{2K\rho},
\\ B = &\ 
\Theta _4-m\phi_x-n\phi_y-\frac{1}{K}\left( u\Theta _2+v\Theta _3-m\phi _x-n\phi _y \right),
\\ C = &\ 
\frac{p}{\gamma -1}-\frac{1-K}{2K\rho}\left( m^2+n^2 \right), 
\\ K = &\ \frac{1+\tilde K}{2},\quad 
\tilde{K}=\frac{(\gamma - 1)(m^2+n^2)}{\left( \gamma -1 \right) \left( m^2+n^2 \right) +p\rho}.
\end{aligned}
\end{equation*}
\end{Thm}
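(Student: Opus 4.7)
The plan is to mimic the 1D argument, exploiting the tensor--product structure of the Gauss--Lobatto nodes. I first observe that the correction term drops out of the cell average: since $\sum_{i_1,j_1=0}^{k}(\omega_{i_1}\omega_{j_1}/4)(\mathbf V_{i_1,j_1}^{i,j}-\bar{\mathbf V}^{i,j})=\mathbf 0$ by definition of $\bar{\mathbf V}^{i,j}$, applying Gauss--Lobatto quadrature to \eqref{eq:scheme2D} and using the SBP identity in each direction collapses the volume flux sums to pure interface terms, yielding
\begin{equation*}
\bar{\mathbf U}^{i,j,n+1}=\bar{\mathbf U}^{i,j,n}-\frac{\Delta t}{\Delta x}\sum_{j_1=0}^{k}\frac{\omega_{j_1}}{2}\bigl(\hat{\mathbf F}^{i+1/2,j}_{j_1}-\hat{\mathbf F}^{i-1/2,j}_{j_1}\bigr)-\frac{\Delta t}{\Delta y}\sum_{i_1=0}^{k}\frac{\omega_{i_1}}{2}\bigl(\hat{\mathbf G}^{i,j+1/2}_{i_1}-\hat{\mathbf G}^{i,j-1/2}_{i_1}\bigr)+\Delta t\sum_{i_1,j_1=0}^{k}\frac{\omega_{i_1}\omega_{j_1}}{4}\bigl(\mathbf S_{i_1,j_1}^{i,j}+\mathbf S_{i_1,j_1}^{0,i,j}\bigr).
\end{equation*}

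Next, following the Zhang--Xia--Shu 2D decomposition, I would split $\bar{\mathbf U}^{i,j,n}=\sum(\omega_{i_1}\omega_{j_1}/4)\mathbf U^{i,j,n}_{i_1,j_1}$ into four groups by absorbing the face-flux differences into the $x$- and $y$-boundary lines and by doubling the time step in the source contribution (to compensate for its half-share of the total mass). This expresses $\bar{\mathbf U}^{i,j,n+1}$ as a convex combination of (i) 1D $x$-directional Lax--Friedrichs Euler sub-updates along each $y$-Gauss--Lobatto line $j_1$, (ii) analogous $y$-directional sub-updates along each $x$-Gauss--Lobatto line $i_1$, (iii) interior nodal values $\mathbf U^{i,j,n}_{i_1,j_1}$ with $0<i_1,j_1<k$, and (iv) source-augmented nodes $\mathbf U^{i,j,n}_{i_1,j_1}+2\Delta t\bigl(\mathbf S^{i,j}_{i_1,j_1}+\mathbf S^{0,i,j}_{i_1,j_1}\bigr)$. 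By convexity of $\mathscr G$, it suffices that every piece lie in $\mathscr G$. Groups (i) and (ii) do so by the standard 1D Zhang--Shu positivity lemma under $\Delta t\le\omega_0\Delta x/(8\alpha_{x,0})$ and $\Delta t\le\omega_0\Delta y/(8\alpha_{y,0})$, respectively; the factor $8$ (as opposed to $4$ in 1D) reflects that the directional sub-updates each absorb only a quarter of the total cell mass after the $x$/$y$ split. Group (iii) is admissible by hypothesis.

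The remaining task is to show group (iv) is admissible under the third CFL bound $t^{s,i,j}_{i_1,j_1}$. Setting $T=2\Delta t$ and suppressing subscripts, this requires
\begin{equation*}
\rho+\Theta_1 T>0,\qquad 2\bigl(\mathcal E+(\Theta_4-m\phi_x-n\phi_y)T\bigr)(\rho+\Theta_1 T)-\bigl(m+(\Theta_2-\rho\phi_x)T\bigr)^2-\bigl(n+(\Theta_3-\rho\phi_y)T\bigr)^2>0.
\end{equation*}
The density inequality is secured by the bound $T\le(1-K)\rho/|\Theta_1|$, which also yields $\rho+\Theta_1 T\ge K\rho$. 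Replacing the denominator in the pressure inequality by this lower bound produces the stronger quadratic inequality $-AT^2+BT+C>0$ with the $A$, $B$, $C$ stated in the theorem. Clearly $A\ge 0$; the only nontrivial point is $C>0$, which follows as in the 1D proof by noting that the 2D definition $\tilde K=(\gamma-1)(m^2+n^2)/[(\gamma-1)(m^2+n^2)+p\rho]$ gives $\tilde K\in[0,1)$ together with $1/\tilde K-1=p\rho/[(\gamma-1)(m^2+n^2)]$, so $K\in[1/2,1)$ and $(1-K)(m^2+n^2)/(K\rho)\le p/(\gamma-1)$, hence $C\ge p/[2(\gamma-1)]>0$. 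The positive root of the quadratic is then $(B+\sqrt{B^2+4AC})/(2A)$, matching the first branch of $t^{s,i,j}_{i_1,j_1}$.

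The main obstacle will be the bookkeeping in the convex decomposition: one must pick the Zhang--Xia--Shu weights so that the $x$-flux and $y$-flux groups each receive exactly one quarter of the total cell mass (leaving one half for the source-augmented group), so that the amplified sub-step $4\Delta t/(\omega_0\Delta x)$ appearing in each 1D directional sub-update precisely matches the 1D Lax--Friedrichs positivity CFL and reproduces the factor $8$ in \eqref{eq:CFLPP2D}. Once this combinatorial split is verified, the admissibility of each piece follows either from the 1D Zhang--Shu lemma or from the quadratic argument above, and the full result follows from convexity of $\mathscr G$. The application of the PP limiter $\Pi_h^{2D}$ then transfers positivity from the cell average to all Gauss--Lobatto nodal values, completing the positivity-preserving property for the fully discrete scheme.
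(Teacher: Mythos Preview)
Your proposal is correct and follows exactly the route the paper intends: the paper omits the 2D proof entirely, stating only that it is analogous to the 1D case, and your argument is precisely that tensor-product extension of the 1D proof---directional Lax--Friedrichs sub-steps each carrying a quarter of the cell mass (hence the factor $8$ in \eqref{eq:CFLPP2D}) together with a source-augmented half handled by the same quadratic-in-$T$ analysis with $C\ge p/[2(\gamma-1)]>0$. One minor wording fix: your group (iii) should consist of the nodes interior in the \emph{relevant} direction ($0<i_1<k$ along each $x$-line in the $x$-flux block, $0<j_1<k$ along each $y$-line in the $y$-flux block), not only those with both indices strictly interior---but you have already flagged exactly this bookkeeping as the point requiring care.
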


\begin{Remark}
Same to the 1D case, by using the PP limiter, we can achieve that $\mathbf U_{i_1,j_1}^{i,j,n+1}\in\mathscr G$ for all $0\le i_1,j_1\le k$.
\end{Remark}

\section{Numerical tests}\label{sec5}

This section presents numerical results from our experiments. Results are shown for $k=2$ unless noted otherwise. We employ the 10-stage, fourth-order SSP-RK scheme \cite{ketcheson2008highly} for temporal discretization. The time step is set to
$$ \Delta t =  \frac{\mathrm{CFL}}{\alpha_{x,0}} \Delta x, \quad \text{or} \quad 
\Delta t =  \frac{\mathrm{CFL}}{\alpha_{x,0}/\Delta x+\alpha_{y,0}/\Delta y} ,$$
for 1D or 2D cases respectively,
with $\mathrm{CFL}=0.5$ which satisfies the CFL condition for PP property. To ensure the reliability and robustness of our approach, all numerical experiments are performed without slope limiters. We compare our structure-preserving scheme (termed ``WBESPP") with three alternative methods to demonstrate its superior capability in simultaneously maintaining WB, ES, and PP properties. For one-dimensional problems, these schemes are formulated as:
\begin{itemize}
\item[(a)] ``non-WB" method with the semi-discrete formulation  
\begin{equation*}
\frac{\mathrm{d}\mathbf{U}_{i_1}^{i}}{\mathrm{d}t}+\frac{2}{\Delta x}\sum_{l=0}^k{2D_{i_1,l}\mathbf{F}^S\left( \mathbf{U}_{i_1}^{i},\mathbf{U}_{l}^{i} \right)}+\frac{2}{\Delta x}\frac{\tau _{i_1}}{\omega _{i_1}}\left( \mathbf{F}_{i_1}^{*,i}-\mathbf{F}_{i_1}^{i} \right) =\mathbf{S}_{i_1}^{i}.
\end{equation*}
Note that the scheme is ES due to $(\mathbf V_{i_1}^i)^T\mathbf S_{i_1}^i=0$. 
The limiter $\Pi_h$ is applied on each stage the RK scheme. Hence, the non-WB scheme is ES and PP but not WB.

\item[(b)] ``non-ES" method with the semi-discrete formulation  \begin{equation*}
\frac{\mathrm{d}\mathbf{U}_{i_1}^{i}}{\mathrm{d}t}+\frac{2}{\Delta x}\sum_{l=0}^k{D_{i_1,l}\mathbf{F}_{l}^i}+\frac{2}{\Delta x}\frac{\tau _{i_1}}{\omega _{i_1}}\left( \mathbf{F}_{i_1}^{*,i}-\mathbf{F}_{i_1}^{i} \right) =\mathbf{S}_{i_1}^{i}+(\tilde{\mathbf S}^0)^{i}_{i_1},
\end{equation*}
where 
\begin{equation*}
    \tilde{\mathbf S}_{i_1}^{0,i}=\frac{2}{\Delta x}\sum\limits_{l=0}^kD_{i_1,l}\mathbf F_{l}^i-\mathbf S_{i_1}^{e,i}.
\end{equation*}
The PP limiter $\Pi_h$ is employed. The non-ES scheme is WB and PP but not ES.  

\item[(c)] ``non-PP" method with the same formulation as  \eqref{eq:scheme1D}. 
While the PP limiter $\Pi_h$ is not used. Hence, the non-PP scheme is WB and ES but not PP. 

\end{itemize}

\subsection{One-dimensional tests}

\begin{Ex}
\textbf{(Well-balancedness test.)}
\label{ex:WBmoving}
\end{Ex}

We consider the 1D equilibrium solution with the gravity function $\phi(x)=x$. The computational domain is taken as $\Omega=[0,2]$. We take the same equilibrium state solution in \cite{grosheintz2020well}
\begin{equation}\label{eq:eqbmM}
m^e=-M\gamma^{1/2},\quad H^e+\phi =\frac{\gamma}{\gamma - 1}+\frac{1}{2}M^2\gamma,\quad s^e=0
\end{equation}
with $\gamma = 5/3$. 
The point value at each Gauss-Lobatto point is calculated by solving a nonlinear equation.  
Three equilibrium states in different flow regimes are considered:
\begin{itemize}
\item[(a)] Hydrostatic flow: $M = 0$.
\item[(b)] Subsonic flow: $M = 0.01$.
\item[(c)] Supersonic flow: $M = 2.5$.
\end{itemize}

First, we take the initial value as $\mathbf U_h=\mathbf U^e_h$ and compute the solution until $T=1$. The boundary conditions are set to be the initial values. The errors of $\rho$ are presented in Table \ref{tabwbmoving}. We can see the WB scheme maintains the machine error for numerical equilibrium state. These demonstrate that our scheme is appropriate for WB preservation for both static and moving equilibrium states, while the non-WB scheme fails to preserve the equilibrium state solutions. 

Next, we add a small perturbation to pressure
$$
p=p^e+A \exp \left\{ -100\left( x-x_c \right) ^2 \right\}, 
$$
where $A=10^{-9},\ 10^{-6},\ 10^{-6}$ and $x_c=1,\ 1.1,\ 1.5$ for case (a), (b), (c), respectively. We run the simulation on $N=200$ meshes until $T=0.45$ for (a), (b), and $T=0.25$ for (c). The boundary conditions are set as initial values. The results of pressure perturbation and velocity perturbation are shown in Fig \ref{figWBmoving}. The reference solution is computed by the WB DG method in \cite{xu2024high} on $N=2000$ meshes. Compared to the results in \cite{grosheintz2020well, zhang2024equilibrium, xu2024high}, one can see that the results by WBESPP scheme agree well with the reference solution, while the results by the non-WB scheme do not match the reference ones, indicating the WBESPP method is more accurate for resolving small perturbations to steady states.

\begin{table}[htb!]
        \centering
        \caption{Example \ref{ex:WBmoving}: One-dimensional well-balancedness test. Errors and orders of density at final time $T = 1$ for $k=2$.}
	\setlength{\tabcolsep}{3.2mm}{
		\begin{tabular}{|c|c|cc|cc|cc|}
			\hline 
   &$N$ & $L^1$ error & order & $L^2$ error & order & $L^\infty$ error & order  \\ \hline
   \multicolumn{8}{|c|}{Hydrostatic flow, $M = 0$}\\ \hline	
   \multirow{4}{*}{WBESPP}
   
     &      20 &9.93e-16 & -- &1.34e-15 & -- &3.22e-15 &-- \\
&      40 &1.85e-15 &-- &2.49e-15 & -- &7.55e-15 &-- \\
&      80 &3.66e-15 &--&4.94e-15 & -- &1.38e-14 &-- \\
&     160 &6.71e-15 & -- &8.48e-15 & -- &2.46e-14 &-- \\

			\hline
   \multirow{4}{*}{non-WB}
   
     &      20 &1.17e-06 & -- &1.58e-06 & -- &4.59e-06 &-- \\
&      40 &1.48e-07 & 2.99 &2.00e-07 & 2.98 &5.94e-07 &2.95 \\
&      80 &1.85e-08 & 2.99 &2.52e-08 & 2.99 &7.57e-08 &2.97 \\
&     160 &2.32e-09 & 3.00 &3.16e-09 & 2.99 &9.55e-09 &2.99 \\

			\hline

    \multicolumn{8}{|c|}{Subsonic flow, $M = 0.01$}\\ \hline	
   \multirow{4}{*}{WBESPP}
   
     &      20 &1.18e-15 & -- &1.88e-15 & -- &7.66e-15 &-- \\
&      40 &4.17e-15 & -- &6.48e-15 & -- &2.79e-14 &-- \\
&      80 &1.31e-14 & -- &2.00e-14 & -- &9.81e-14 &-- \\
&     160 &2.77e-14 & -- &4.29e-14 & -- &1.67e-13 &-- \\

			\hline
   \multirow{4}{*}{non-WB}	
    
     &      20 &1.50e-05 & -- &2.89e-05 & -- &1.27e-04 &-- \\
&      40 &3.45e-06 & 2.12 &6.34e-06 & 2.19 &2.79e-05 &2.18 \\
&      80 &7.25e-07 & 2.25 &1.23e-06 & 2.37 &5.31e-06 &2.39 \\
&     160 &1.38e-07 & 2.39 &2.10e-07 & 2.55 &8.37e-07 &2.67 \\

			\hline

    \multicolumn{8}{|c|}{Supersonic flow, $M=2.5$}\\ \hline	
   \multirow{4}{*}{WBESPP}
    
     &      20 &2.82e-14 & -- &4.16e-14 & -- &1.29e-13 &-- \\
&      40 &5.36e-14 & -- &7.67e-14 & -- &2.66e-13 &-- \\
&      80 &1.28e-13 & -- &1.88e-13 & -- &5.65e-13 &-- \\
&     160 &2.69e-13 & -- &3.79e-13 & -- &1.31e-12 &-- \\

			\hline
   \multirow{4}{*}{non-WB}
     
     &      20 &1.85e-05 & -- &2.57e-05 & -- &6.74e-05 &-- \\
&      40 &3.06e-06 & 2.59 &4.21e-06 & 2.61 &1.16e-05 &2.53 \\
&      80 &4.65e-07 & 2.72 &6.36e-07 & 2.73 &1.81e-06 &2.69 \\
&     160 &6.58e-08 & 2.82 &8.96e-08 & 2.83 &2.59e-07 &2.80 \\

			\hline
            
	\end{tabular}} 
    \label{tabwbmoving}
\end{table}

\begin{figure}[htbp!]
	\centering
    \subfigure[Pressure perturbation, $M=0$.]{
		\includegraphics[width=0.45\linewidth]{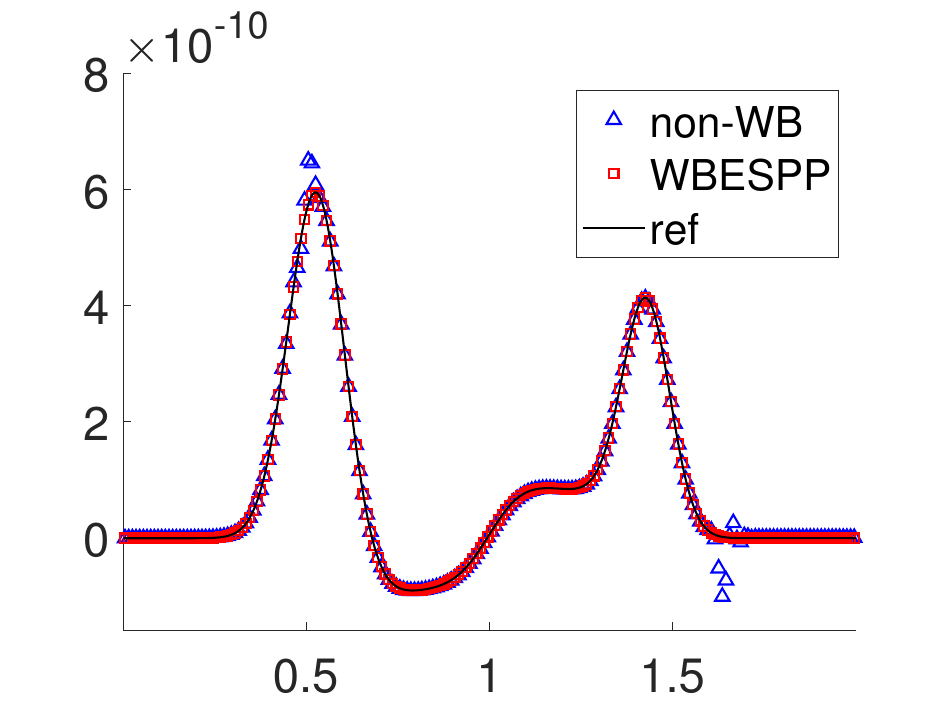}}
	\subfigure[Velocity perturbation, $M=0$.]{
		\includegraphics[width=0.45\linewidth]{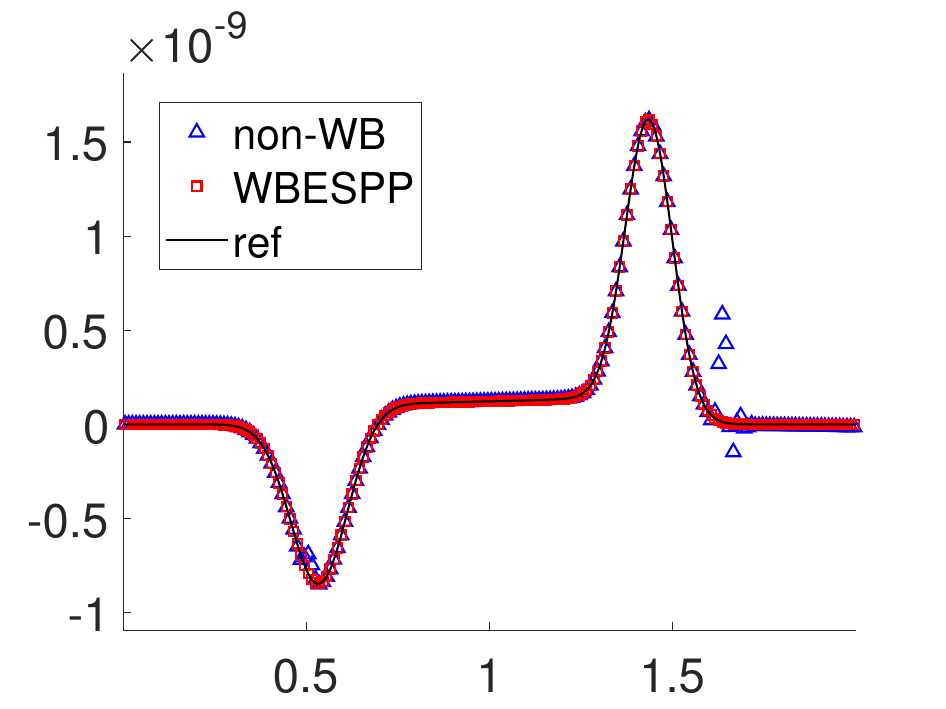}}
    \subfigure[Pressure perturbation, $M=0.01$.]{
		\includegraphics[width=0.45\linewidth]{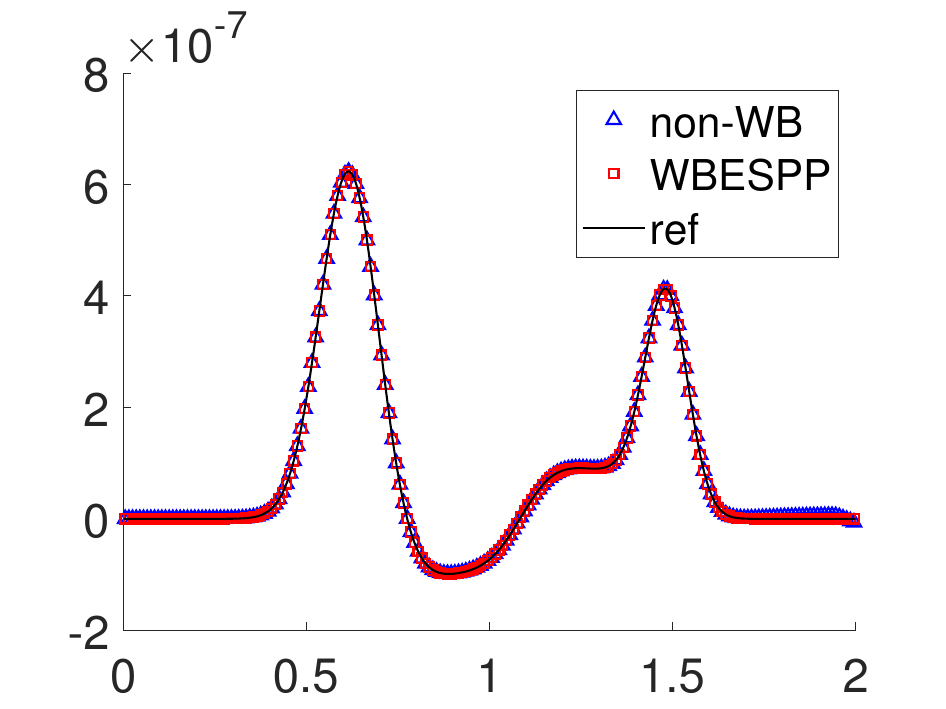}}
	\subfigure[Velocity perturbation, $M=0.01$.]{
		\includegraphics[width=0.45\linewidth]{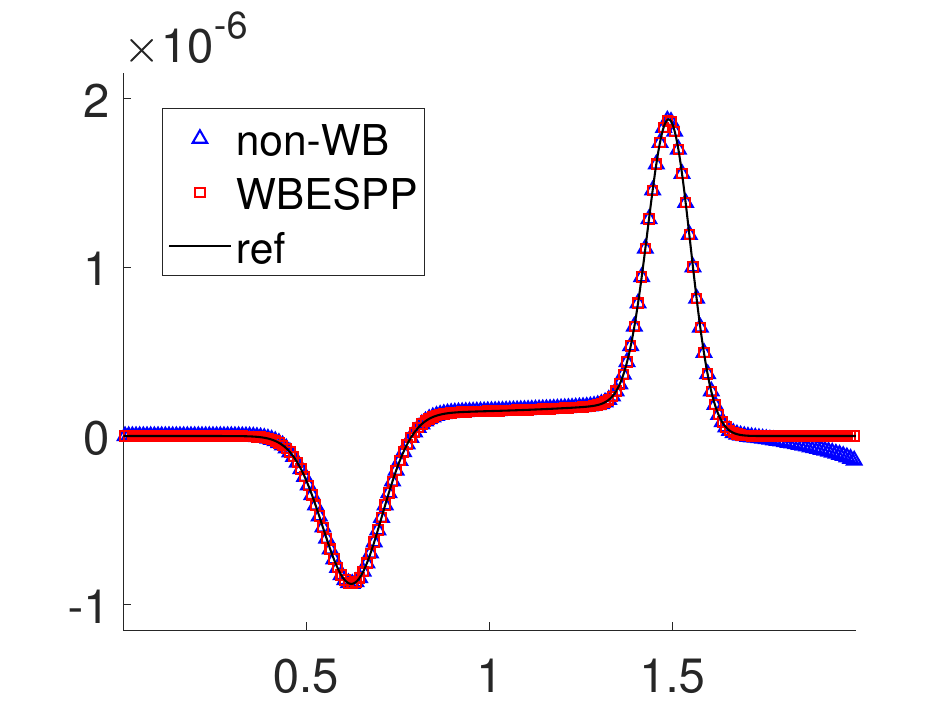}}
    \subfigure[Pressure perturbation, $M=2.5$.]{
		\includegraphics[width=0.45\linewidth]{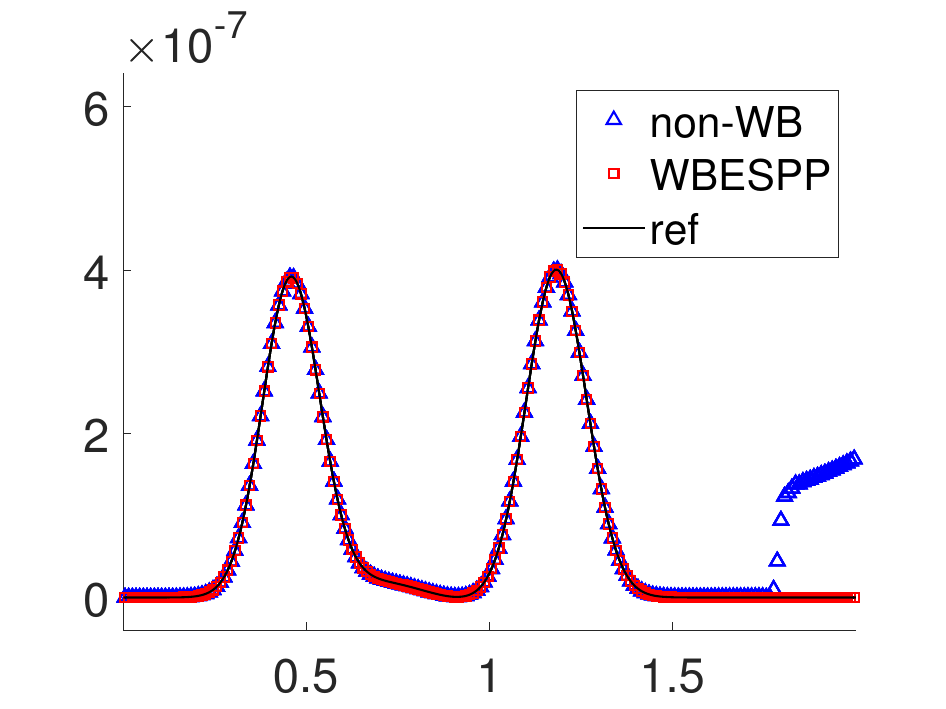}}
	\subfigure[Velocity perturbation, $M=2.5$.]{
		\includegraphics[width=0.45\linewidth]{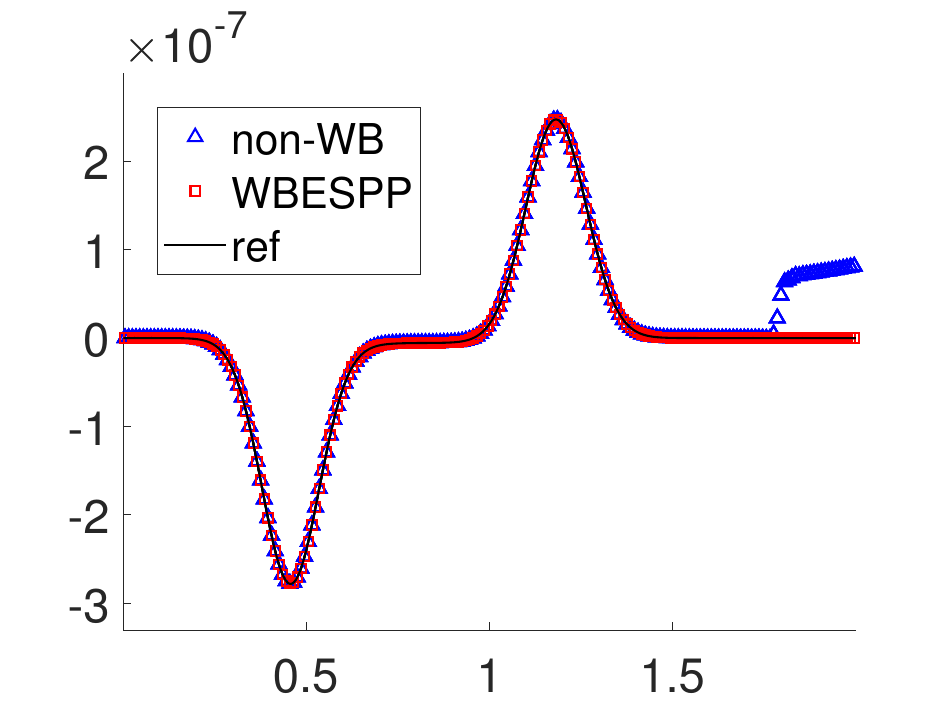}}
	\caption{Example \ref{ex:WBmoving}: One-dimensional well-balancedness test with small pressure perturbation. The numerical solution of pressure perturbation and velocity perturbation of different flow regimes.} 
   \label{figWBmoving}
\end{figure}

\begin{Ex}
\textbf{(Accuracy test.)}
\label{ex:acc1D}
\end{Ex}

This example is considered to test the accuracy of the one-dimensional system. The computational domain is taken as
$\Omega = [0,2]$. The exact smooth solution is given by \cite{xing2013high}
$$
\rho =1+0.2\sin \left( \pi(x-t) \right),\quad u=1,\quad p=5.5-x+t+0.2\cos(\pi(x-t))/\pi.
$$
The boundary conditions are set to be the exact solutions when needed.
In Table \ref{tabacc1D}, we present the errors and orders of $\rho$ at $T=2$ on different meshes. It can be seen that the optimal $(k+1)$-th accuracy is obtained.

\begin{table}[htb!]
        \centering
        \caption{Example \ref{ex:acc1D}: One-dimensional accuracy test. Errors and orders of density at final time $T = 2$ for $k=1,2,3$.}
	\setlength{\tabcolsep}{3.2mm}{
		\begin{tabular}{|c|c|cc|cc|cc|}
			\hline 
   &$N$ & $L^1$ error & order & $L^2$ error & order & $L^\infty$ error & order  \\ \hline	
   \multirow{4}{*}{$k=1$}
   
    &      20 &7.25e-03 & -- &9.65e-03 & -- &2.75e-02 &-- \\
&      40 &1.75e-03 & 2.05 &2.34e-03 & 2.04 &7.65e-03 &1.84 \\
&      80 &4.32e-04 & 2.01 &5.74e-04 & 2.03 &1.99e-03 &1.95 \\
&     160 &1.09e-04 & 1.99 &1.42e-04 & 2.01 &5.01e-04 &1.99 \\

			\hline

    \multirow{4}{*}{$k=2$}
   
     &      20 &2.02e-04 & -- &2.50e-04 & -- &4.63e-04 &-- \\
&      40 &3.07e-05 & 2.72 &3.78e-05 & 2.72 &7.24e-05 &2.68 \\
&      80 &4.14e-06 & 2.89 &5.08e-06 & 2.90 &9.75e-06 &2.89 \\
&     160 &5.28e-07 & 2.97 &6.49e-07 & 2.97 &1.23e-06 &2.98 \\

			\hline

    \multirow{4}{*}{$k=3$}
   
     &      20 &2.18e-06 & -- &3.49e-06 & -- &2.20e-05 &-- \\
&      40 &1.60e-07 & 3.77 &2.42e-07 & 3.85 &1.87e-06 &3.56 \\
&      80 &1.10e-08 & 3.86 &1.58e-08 & 3.93 &1.37e-07 &3.77 \\
&     160 &7.27e-10 & 3.92 &1.02e-09 & 3.96 &9.36e-09 &3.87 \\

			\hline
            
	\end{tabular}} 
    \label{tabacc1D}
\end{table}

\begin{Ex}
\textbf{(Sod shock tube.)}
\label{ex:Sod}
\end{Ex}

We examine a 1D Sod-like shock tube problem featuring a gravitational potential $\phi_x = 1$. This classical benchmark for the Euler equations was first presented in \cite{sod1978survey}. The computational domain $\Omega = [-1,1]$ employs reflective boundary conditions, with initial conditions specified as
$$
\left( \rho ,u, p \right) =\begin{cases}
	\left( 1,0,1 \right) , & x<0,\\
	\left( 0.125,0,0.1 \right) , & x\ge 0.\\
\end{cases}
$$
In Fig \ref{figSod}, we present the result at $T=0.4$ on $N=200$ meshes, where the reference solution is computed by the WBPP DG method in \cite{du2024well} with TVB limiter on $N=2000$ meshes. We note that for the non-ES scheme without limiter, the computation will blow up in the first several steps. 
In Fig \ref{figSodU}, we plot the evolution of total entropy for WBESPP and non-ES schemes. Here we use the equilibrium state solution defined by \eqref{eq:eqbmM} with $M=2.5$ and $\gamma=1.4$. It is notable that since this test problem is far from the equilibrium state, the choice of equilibrium state has no obvious effect on the numerical solutions. 
To illustrate this point, we also test this problem with the hydrostatic state given by $\mathbf U^e=(\exp(-x),\ 0,\ \exp(-x)/(\gamma - 1))$. And in Fig \ref{figSodcompare}, we present the density and the absolute error of these two equilibrium states, denoted by ``moving" and ``hydros", respectively. It can be observed that the absolute error between the two results is very small. Moreover, Fig. \ref{figCFL}(a) shows the maximum time steps restricted by the flux term and the source term in Theorem \ref{thm:pp} for different mesh resolutions, denoted by “F” and “S”, respectively. The results indicate that $t_{i_1}^{s,i}$ in \eqref{eq:ts1D}  remains indeed an $\mathcal O(1)$ term independent of the mesh size, and the time step allowed by the source term is much larger than that allowed by the flux term.

\begin{figure}[htbp!]
	\centering
    \subfigure[Density.]{
		\includegraphics[width=0.31\linewidth]{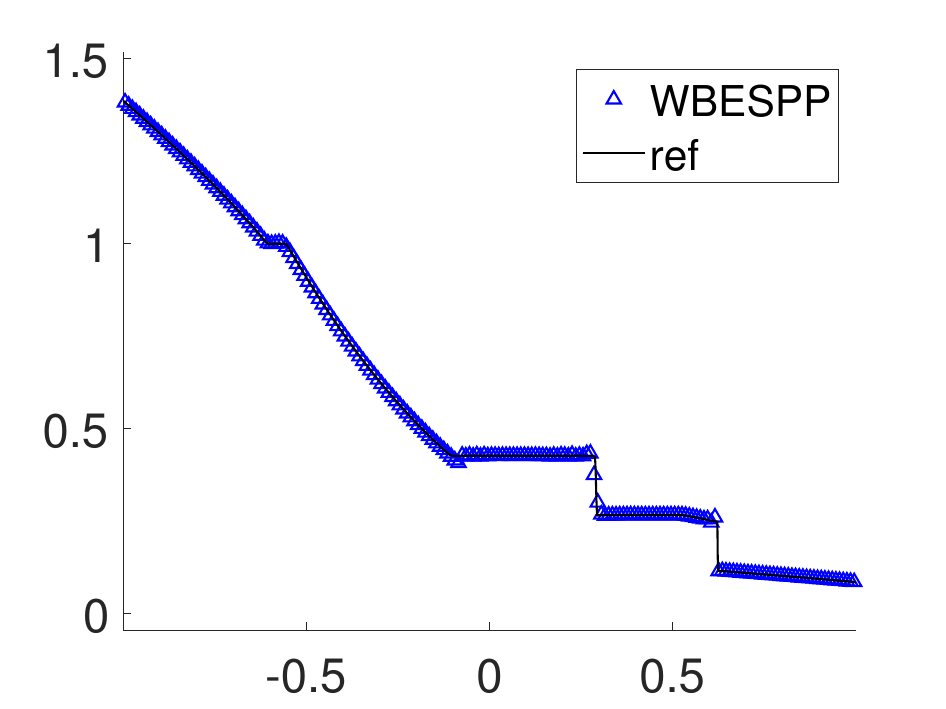}}
	\subfigure[Velocity.]{
		\includegraphics[width=0.31\linewidth]{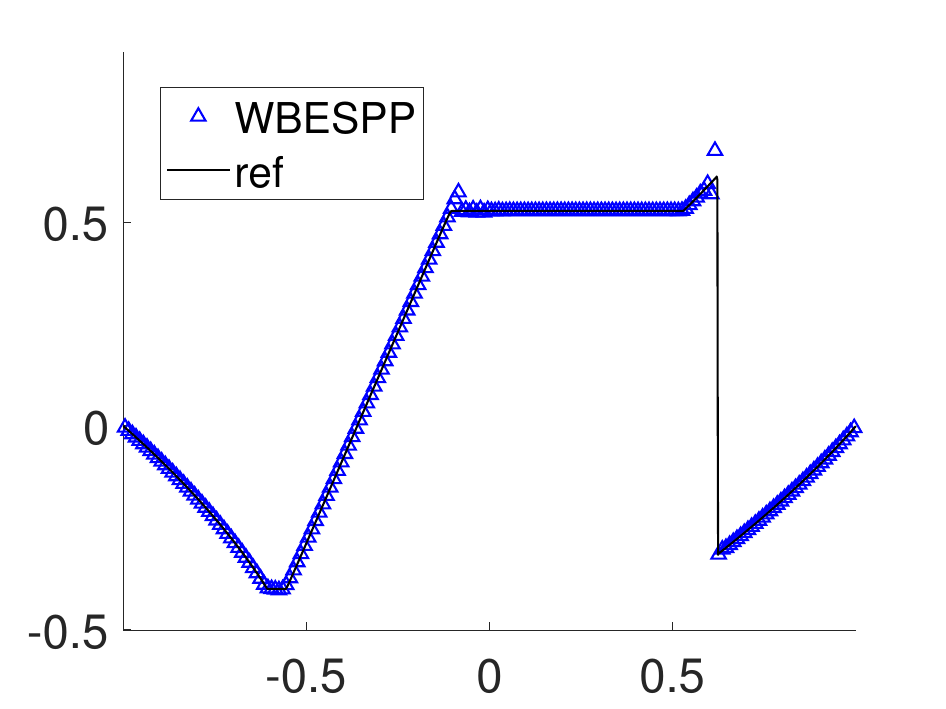}}
    \subfigure[Pressure.]{
		\includegraphics[width=0.31\linewidth]{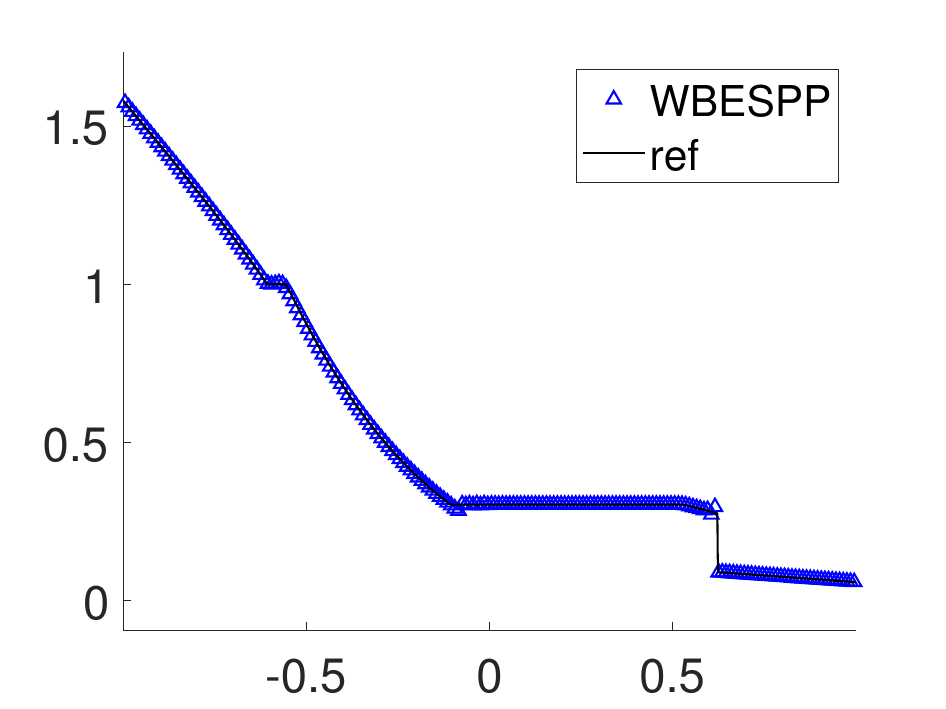}}
	\caption{Example \ref{ex:Sod}: One-dimensional Sod-like shock tube. The numerical solution at $T=0.4$ with $N=200$.}
   \label{figSod}
\end{figure}

\begin{figure}[htbp!]
	\centering
    \subfigure[non-ES.]{   \includegraphics[width=0.45\linewidth]{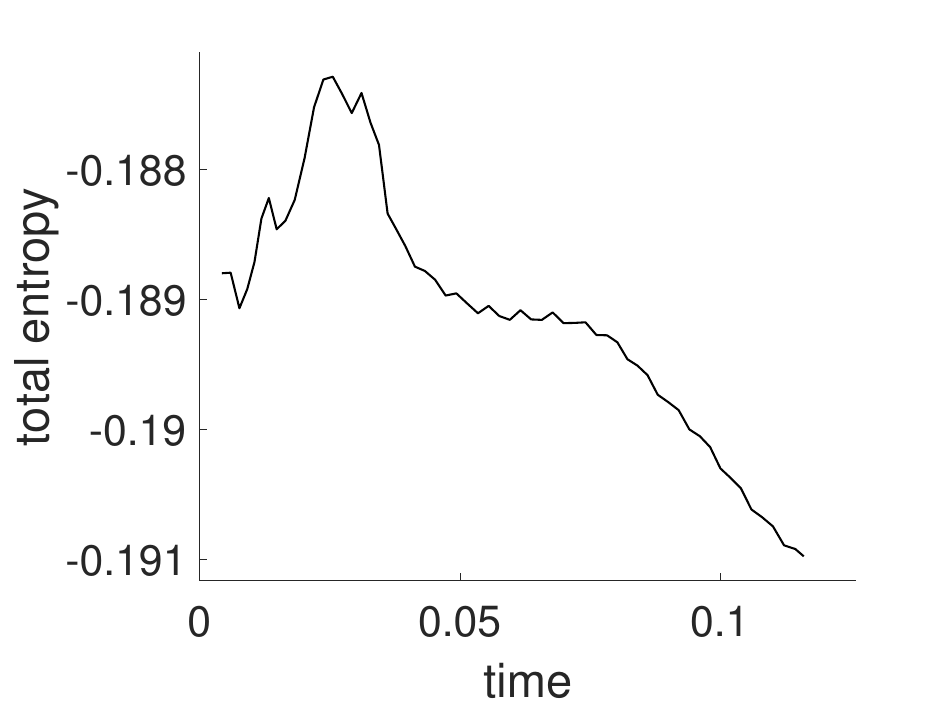}}
	\subfigure[WBESPP.]{ \includegraphics[width=0.45\linewidth]{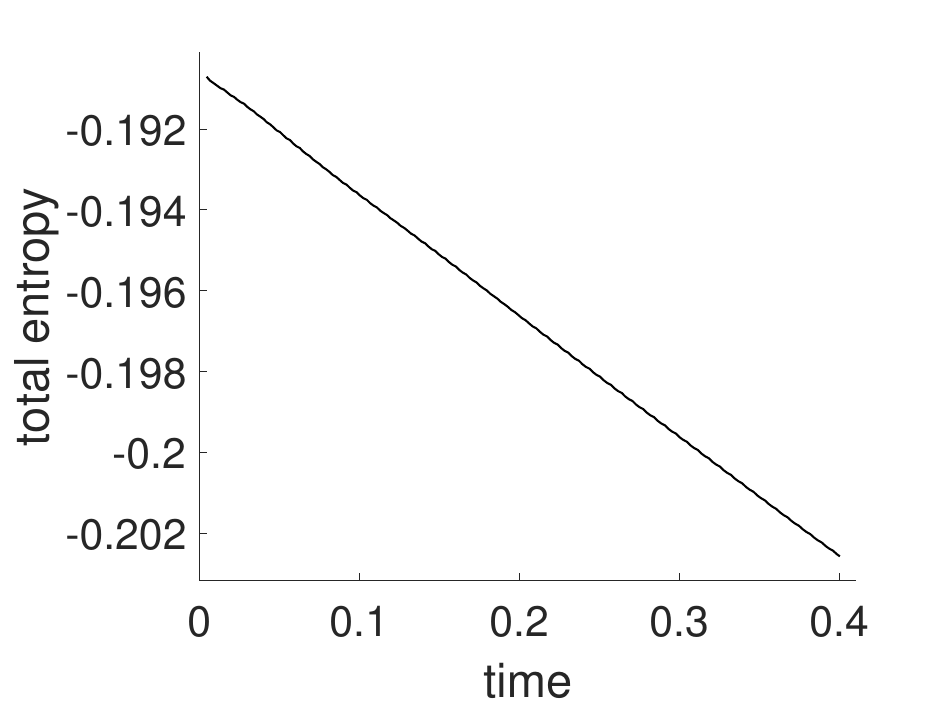}}
	\caption{Example \ref{ex:Sod}: One-dimensional Sod-like shock tube. The evolution of total entropy with time.  
    }
   \label{figSodU}
\end{figure}

\begin{figure}[htbp!]
	\centering
    \subfigure[Density.]{   \includegraphics[width=0.45\linewidth]{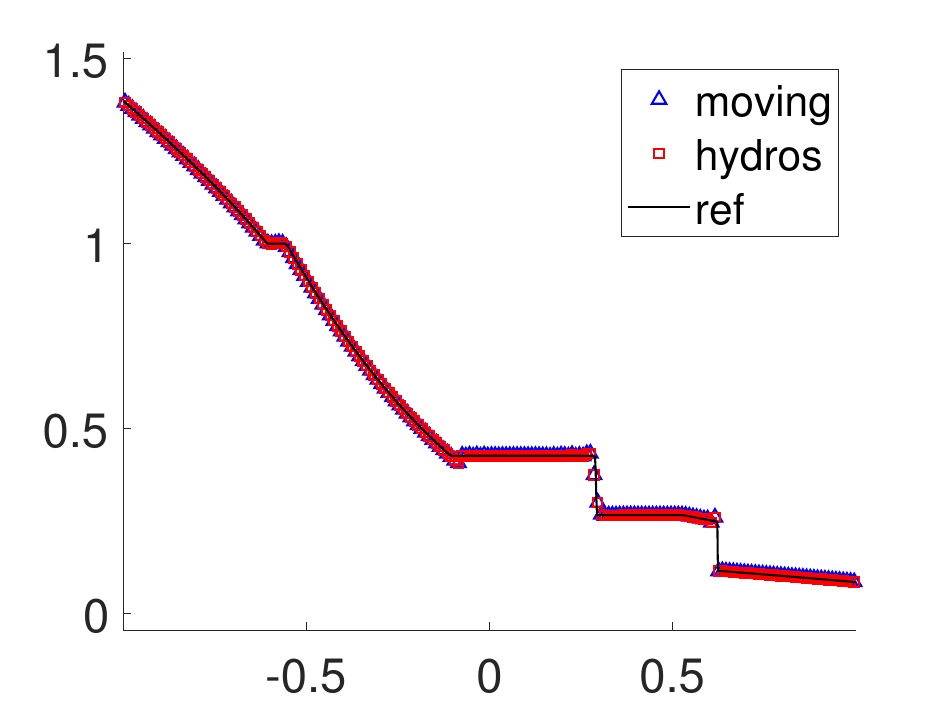}}
	\subfigure[Absolute error.]{ \includegraphics[width=0.45\linewidth]{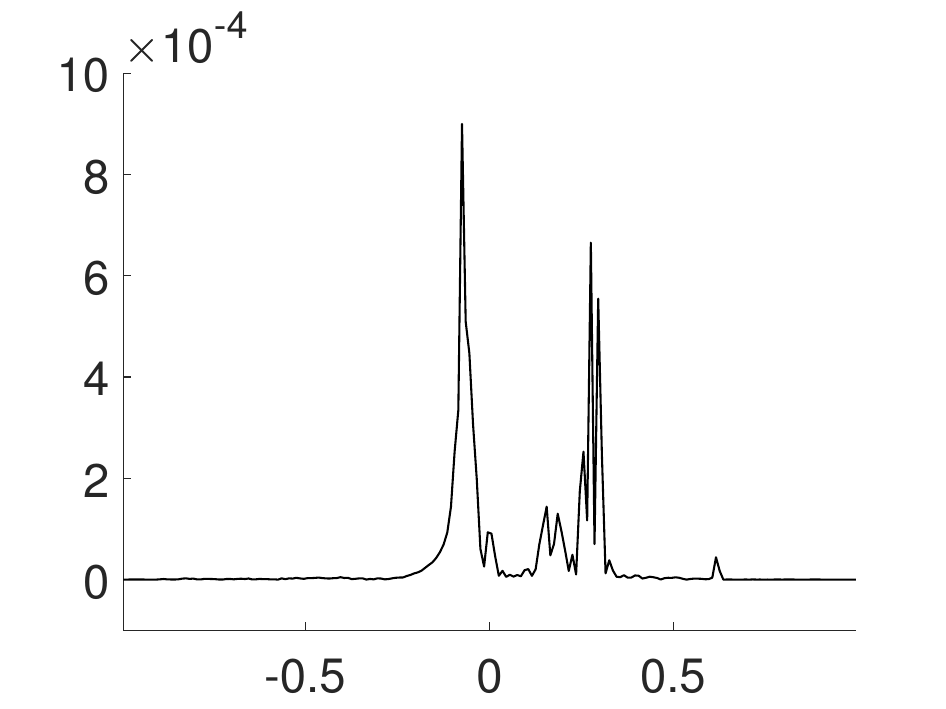}}
	\caption{Example \ref{ex:Sod}: One-dimensional Sod-like shock tube. The density at $T=0.4$ with different equilibrium states.}
   \label{figSodcompare}
\end{figure}

\begin{Ex}
\textbf{(Double rarefaction wave.)}
\label{ex:DRF}
\end{Ex}

We examine the one-dimensional double rarefaction wave problem \cite{wu2021uniformly, du2024well} under the gravitational potential $\phi_x=x$, characterized by extremely low density and pressure regimes. The computational domain $\Omega = [-1,1]$ employs outflow boundary conditions, with initial conditions specified as
$$
\left( \rho ,u,p \right) =\begin{cases}
	\left( 7,-1,0.2 \right) , & x<0,\\
	\left( 7,1,0.2 \right) , & x\ge 0.\\
\end{cases}
$$
In Fig \ref{figDRF}, we present the result at $T=0.6$ on $N=800$ meshes, where the reference solution is computed by the WBPP DG method in \cite{du2024well} on $N=4000$ meshes. For this example, the non-PP scheme will blow up in the first several steps. The maximum time step restricted by the flux term and source term is plotted in Fig \ref{figCFL} (b), which indicates that the restriction imposed by the source term is much weaker.

\begin{figure}[htbp!]
	\centering
    \subfigure[Density.]{
		\includegraphics[width=0.31\linewidth]{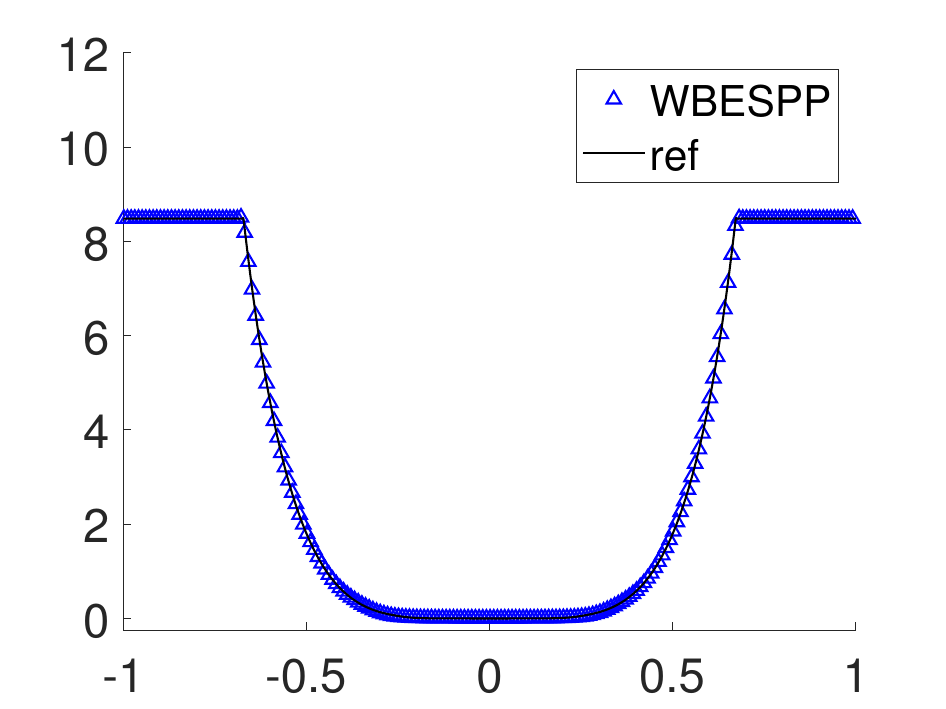}}
	\subfigure[Momentum.]{
		\includegraphics[width=0.31\linewidth]{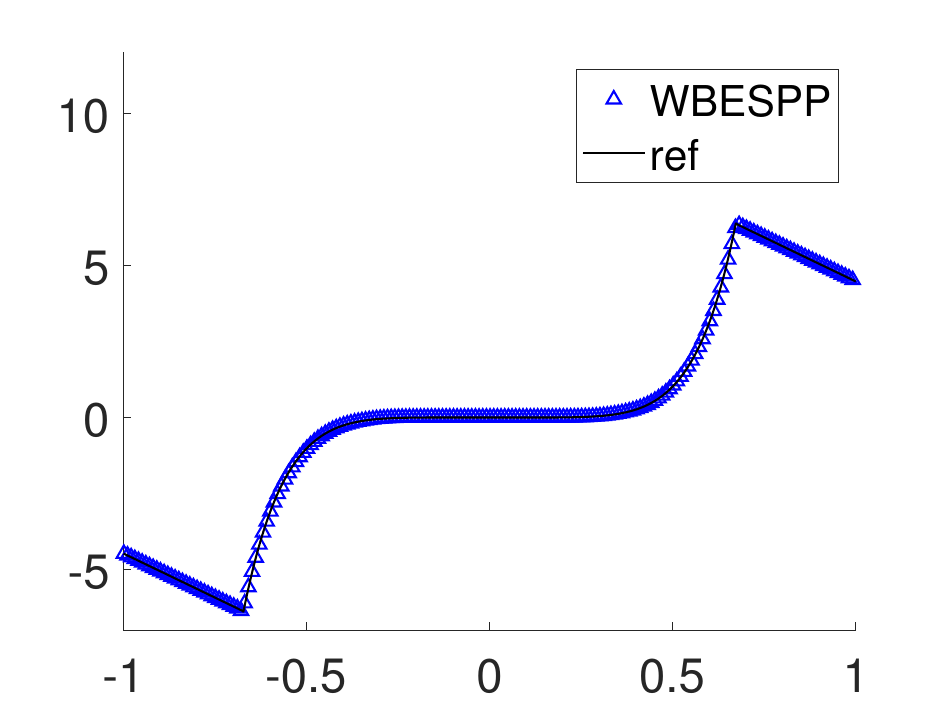}}
	\subfigure[Energy.]{
		\includegraphics[width=0.31\linewidth]{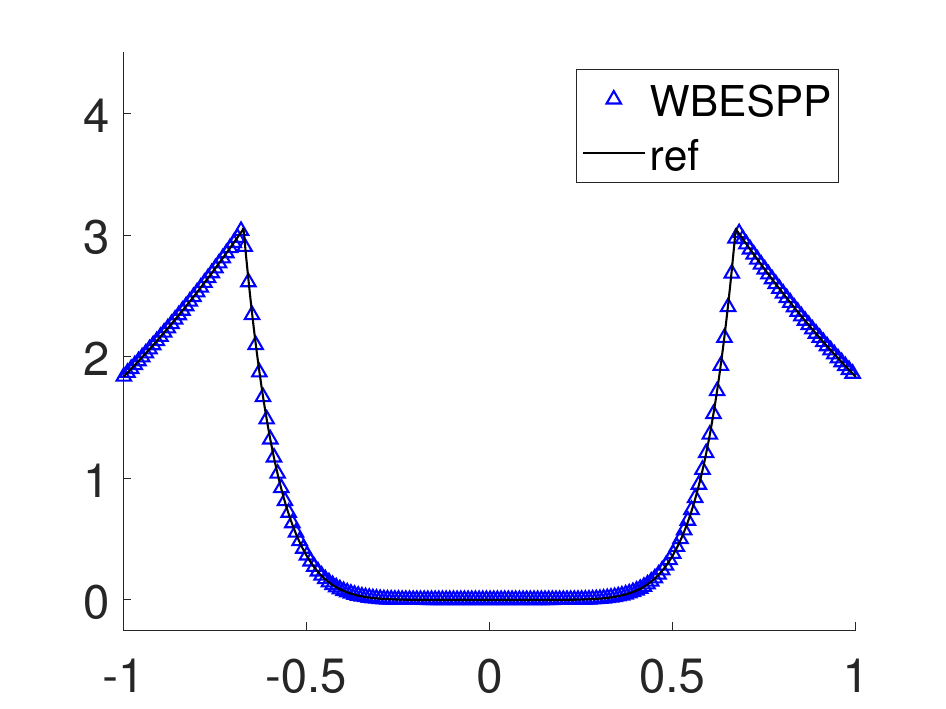}}
	\caption{Example \ref{ex:DRF}: One-dimensional double rarefaction wave problem. The numerical solution at $T=0.6$ with $N=800$.}
   \label{figDRF}
\end{figure}

\begin{figure}[htbp!]
	\centering
    \subfigure[Sod shock tube.]{   \includegraphics[width=0.45\linewidth]{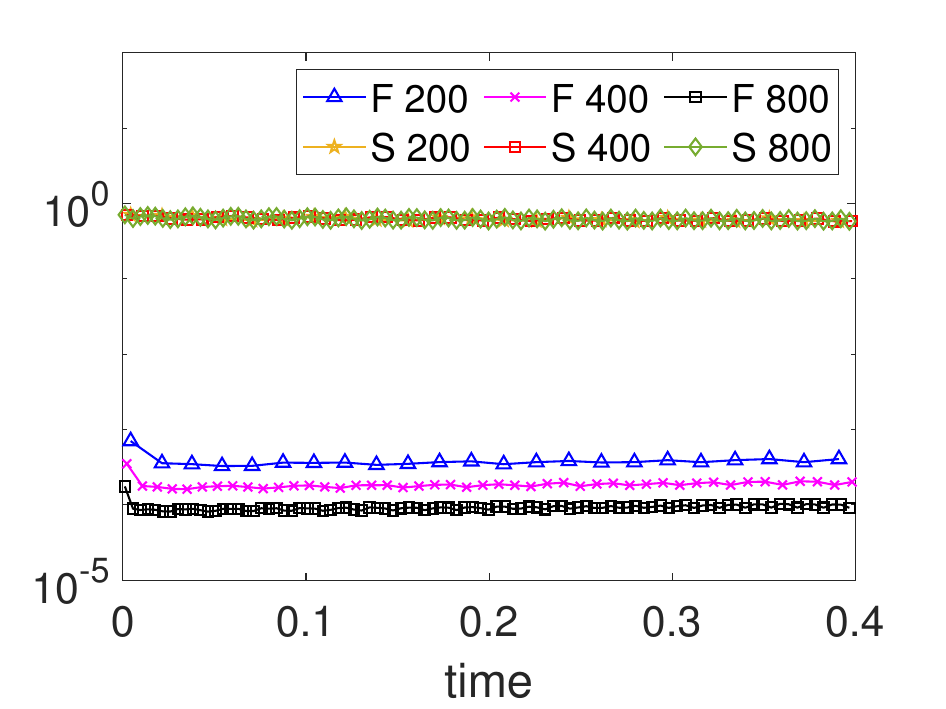}}
	\subfigure[Double rarefaction wave.]{ \includegraphics[width=0.45\linewidth]{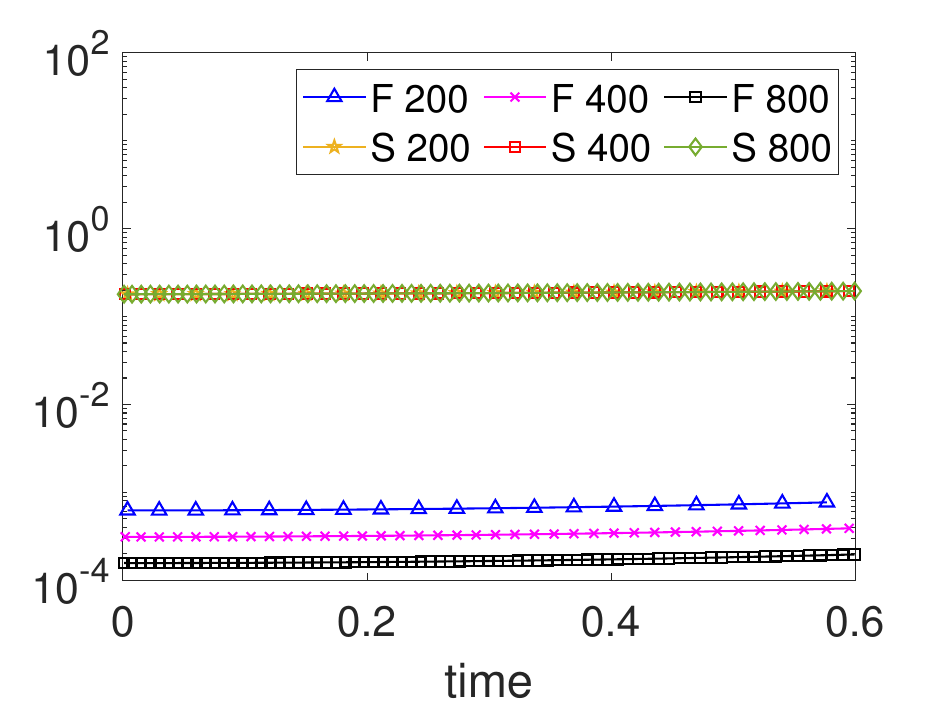}}
	\caption{Example \ref{ex:Sod} and Example \ref{ex:DRF}: One-dimensional Sod-like shock tube and double rarefaction wave. The maximum time step restriction by PP.
    }
   \label{figCFL}
\end{figure}

\subsection{Two-dimensional tests}

\begin{Ex}
\textbf{(Accuracy test.)}
\label{ex:acc2D}
\end{Ex}

In this part, we test the accuracy of the two-dimensional scheme with the gravitational potential $\phi_x=\phi_y=1$. The computational domain is taken as $\Omega=[0,2]\times[0,2]$, and the exact smooth solution is given by
$$\begin{aligned}
\rho &=1+0.2\sin \left( \pi(x+y-2t) \right),
\\u&=v=1,
\\ p&=5.5-x-y+2t+0.2\cos(\pi(x+y-2t))/\pi.\end{aligned}
$$
We run the simulation up to $T = 2$. The boundary conditions are set as the exact solutions when needed. We present the errors and orders of density for $k = 1,2,3$ in Table \ref{tabacc2D}, and the optimal convergence rates are always observed.

\begin{table}[htb!]
        \centering
        \caption{Example \ref{ex:acc2D}: Two-dimensional accuracy test. Errors and orders of density at final time $T = 2$.}
	\setlength{\tabcolsep}{3mm}{
		\begin{tabular}{|c|c|cc|cc|cc|}
			\hline 
   &$N$ & $L^1$ error & order & $L^2$ error & order & $L^\infty$ error & order  \\ \hline 
   \multirow{4}{*}{$k=1$}
     &      20 &9.37e-03 & -- &1.32e-02 & -- &8.66e-02 &-- \\
&      40 &2.26e-03 & 2.05 &3.15e-03 & 2.07 &2.55e-02 &1.76 \\
&      80 &5.62e-04 & 2.01 &7.70e-04 & 2.03 &6.99e-03 &1.87 \\
&     160 &1.43e-04 & 1.98 &1.92e-04 & 2.00 &1.75e-03 &2.00 \\
			\hline
   \multirow{4}{*}{$k=2$}
     &      20 &2.64e-04 & -- &3.48e-04 & -- &9.69e-04 &-- \\
&      40 &4.08e-05 & 2.70 &5.47e-05 & 2.67 &1.78e-04 &2.44 \\
&      80 &5.54e-06 & 2.88 &7.48e-06 & 2.87 &2.60e-05 &2.78 \\
&     160 &7.12e-07 & 2.96 &9.64e-07 & 2.96 &3.44e-06 &2.92 \\
    \hline
  \multirow{4}{*}{$k=3$}
		&      20 &1.64e-06 & -- &2.05e-06 & -- &7.09e-06 & -- \\
&      40 &9.97e-08 & 4.04 &1.25e-07 & 4.04 &3.89e-07 &4.19 \\
&      80 &6.19e-09 & 4.01 &7.73e-09 & 4.01 &2.44e-08 &4.00 \\
&     160 &3.87e-10 & 4.00 &4.83e-10 & 4.00 &1.58e-09 &3.95 \\
   \hline
	\end{tabular}} 
    \label{tabacc2D}
\end{table}

\begin{Ex}
\textbf{(Keplerian disk.)}
\label{ex:WB2D}
\end{Ex}

This example is first studied in \cite{gaburro2018well} under the polar coordinates $(r,\varphi)$, where $r=\sqrt{x^2+y^2}$ denotes the radius and $\varphi=\arctan(y/x)$ denotes the polar angle. It describes a rotating equilibrium state in a disk with the gravitational potential $\phi(r)=-1/r$. 
The computational domain is $\Omega=\{(x,y):1\le r\le 2\}$. In numerical simulation, we embedded the domain in a rectangle $[-2,2]\times[-2,2]$, and set the cell $K_{ij}\in\mathcal K$ if $1<r(x_i,y_j)<2$.  
 We consider the equilibrium state 
$$
\rho^e =1,\quad u^e=-\sqrt{\frac{1}{r}}\sin \varphi,\quad v^e=\sqrt{\frac{1}{r}}\cos \varphi,\quad p^e=1,
$$
which is a genuinely moving equilibrium state on Cartesian coordinates. 

First, we take the initial data $\mathbf U_h=\mathbf U_h^e$ and simulate until $T = 2$. The boundary conditions are set to be the initial values. The balance errors of density are given in Table \ref{tabWB2D} with $N_x=N_y=:N$, demonstrating that our
proposed WBESPP scheme can maintain the balance errors at the machine level for the 2D problem. 

\begin{table}[htb!]
        \centering
        \caption{Example \ref{ex:WB2D}: Keplerian disk. Errors and orders of density at $T = 2$.}
	\setlength{\tabcolsep}{3.2mm}{
		\begin{tabular}{|c|c|cc|cc|cc|}
			\hline 
   &$N$ & $L^1$ error & order & $L^2$ error & order & $L^\infty$ error & order  \\ \hline	
   \multirow{3}{*}{WBESPP}
   
    &      20 &1.73e-15 & -- &2.72e-15 & -- &1.49e-14 &-- \\
&      40 &3.41e-15 & -- &4.91e-15 & -- &1.62e-14 &-- \\
&      80 &7.05e-15 & -- &9.63e-15 & -- &2.69e-14 &-- \\

			\hline

    \multirow{3}{*}{non-WB}
   
    &      20 &7.89e-05 & -- &1.47e-04 & -- &1.32e-03 &-- \\
&      40 &1.12e-05 & 2.82 &2.10e-05 & 2.81 &2.04e-04 &2.70 \\
&      80 &1.45e-06 & 2.95 &2.82e-06 & 2.90 &2.93e-05 &2.80 \\

			\hline
            
	\end{tabular}} 
    \label{tabWB2D}
\end{table}

Next, a small perturbation is added on density initially
$$ \rho=\rho^e+10^{-6}\exp\left\{-50((x+1.5)^2+y^2)\right\}. $$
We use $N_x=N_y=120$ meshes and simulate until $T=2.5$. The boundary conditions are set to be the initial values. The expected result is the transport of this density perturbation at different velocities along polar-axis that are bigger at the interior and smaller at the exterior. In Fig \ref{figWB2D}, we compare the result of the density perturbation of non-WB scheme and the proposed WBESPP scheme. It can be seen that the non-WB scheme has large error, while the WBESPP scheme can capture the small perturbation accurately.

\begin{figure}[htbp!]
	\centering
    \subfigure[non-WB.]{
		\includegraphics[width=0.4\linewidth]{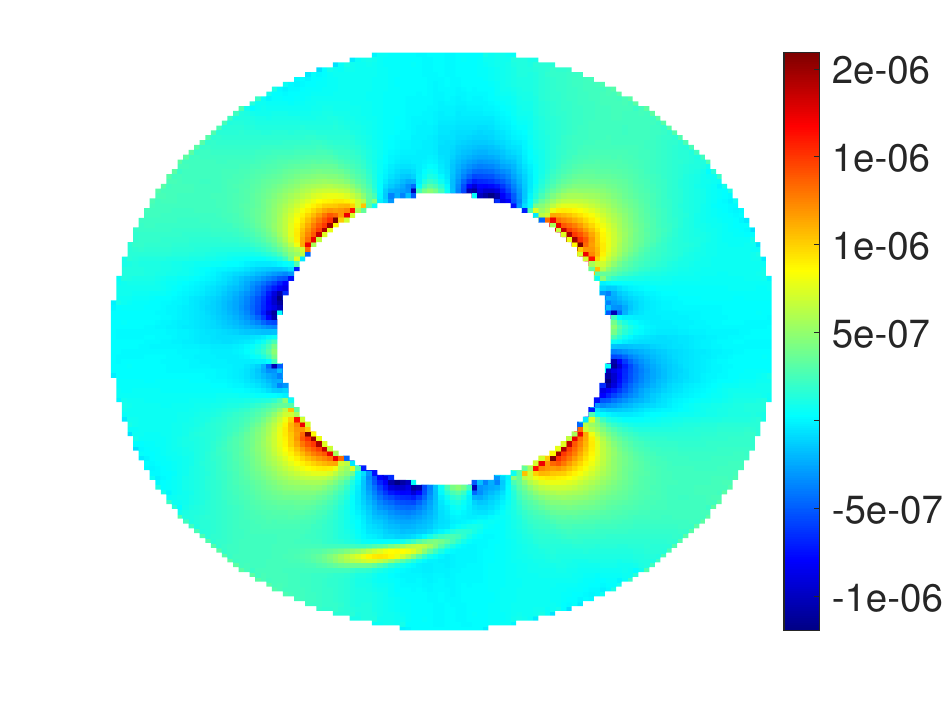}}
	\subfigure[WBESPP.]{
		\includegraphics[width=0.4\linewidth]{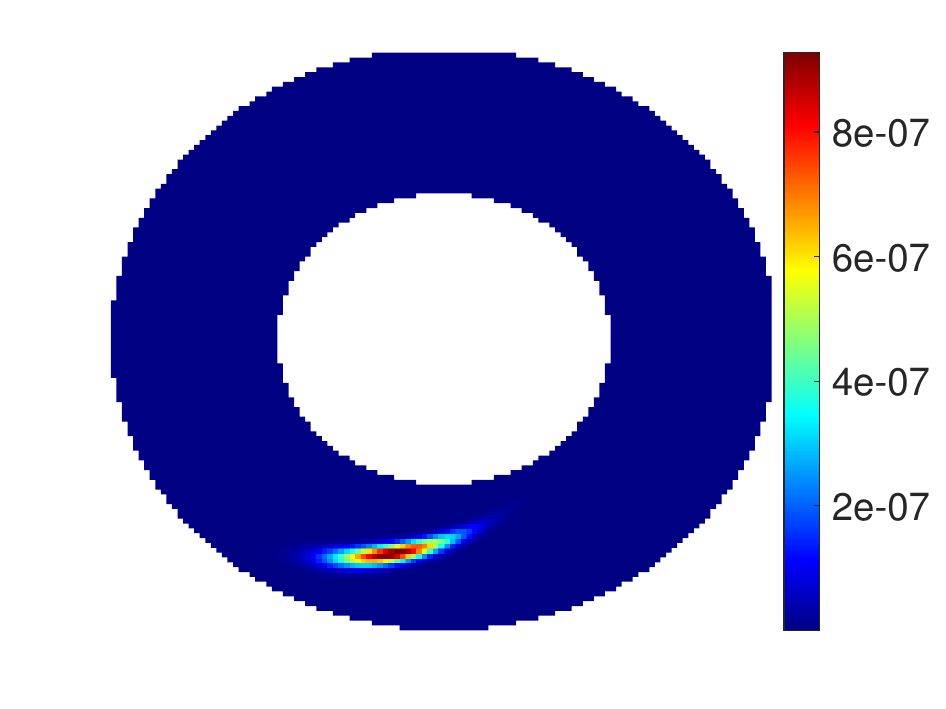}}
	\caption{Example \ref{ex:WB2D}: Keplerian disk. The result of density perturbation at $T=2$ with $N_x=N_y=120$.}
   \label{figWB2D}
\end{figure}

Furthermore, we simulate a more challenge case, where the density contains a discontinuity
$$
\rho ^e=\begin{cases}
	2,\ r<1.4,\\
	1,\ r\ge 1.4,\\
\end{cases}  u^e=-\sqrt{\frac{1}{r}}\sin \varphi,\quad v^e=\sqrt{\frac{1}{r}}\cos \varphi,\quad p^e=1.
$$
We simulate this case until $T=2$, and the balance errors are shown in Table \ref{tabWB2D2}. In Fig \ref{figWB2D2}, we present the density perturbation with $N_x=N_y=120$. It can be seen that the non-WB scheme shows large error since the discontinuity, while the WBESPP scheme preserves the error at round-off level as well.

\begin{table}[htb!]
        \centering
        \caption{Example \ref{ex:WB2D}: Keplerian disk with a discontinuity. Errors of density at $T = 2$.}
	\setlength{\tabcolsep}{3.2mm}{
		\begin{tabular}{|c|c|c|c|c|}
			\hline 
   &$N$ & $L^1$ error  & $L^2$ error  & $L^\infty$ error   \\ \hline	
   \multirow{3}{*}{WBESPP}
   
    &      20 &2.22e-15  &3.69e-15  &2.58e-14  \\
&      40 &4.42e-15  &6.99e-15  &5.75e-14  \\
&      80 &9.23e-15  &1.52e-14  &3.95e-13  \\

			\hline

    \multirow{3}{*}{non-WB}
   
    &      20 &4.25e-02  &1.14e-01 &6.54e-01 \\
&      40 &2.28e-02  &7.07e-02  &5.98e-01  \\
&      80 &1.32e-02 &5.44e-02 &5.99e-01  \\

			\hline
            
	\end{tabular}} 
    \label{tabWB2D2}
\end{table}

\begin{figure}[htbp!]
	\centering
    \subfigure[non-WB.]{
		\includegraphics[width=0.4\linewidth]{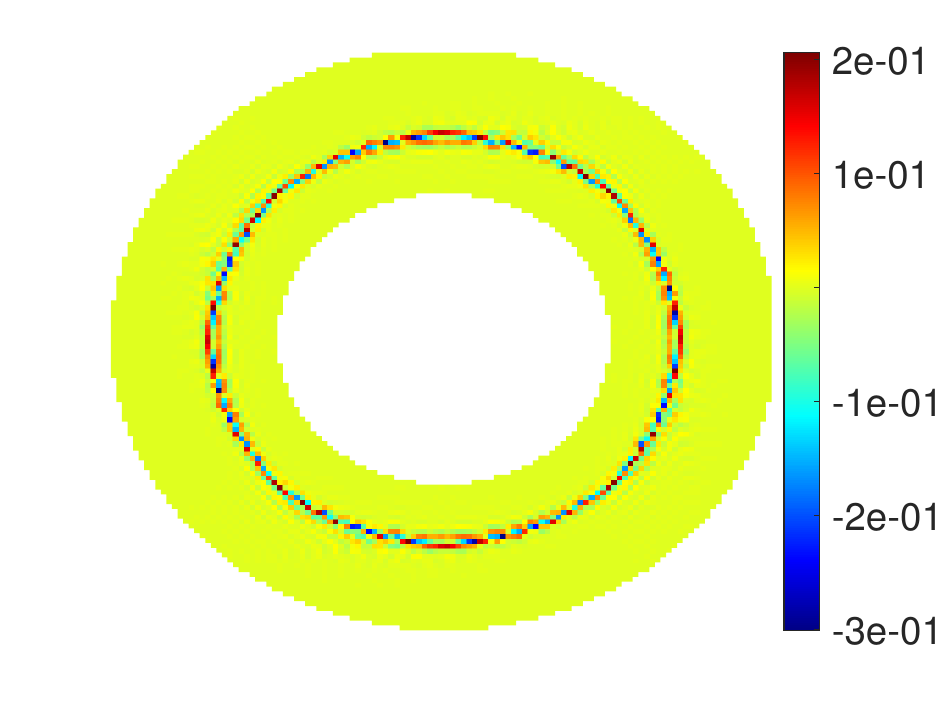}}
	\subfigure[WBESPP.]{
		\includegraphics[width=0.4\linewidth]{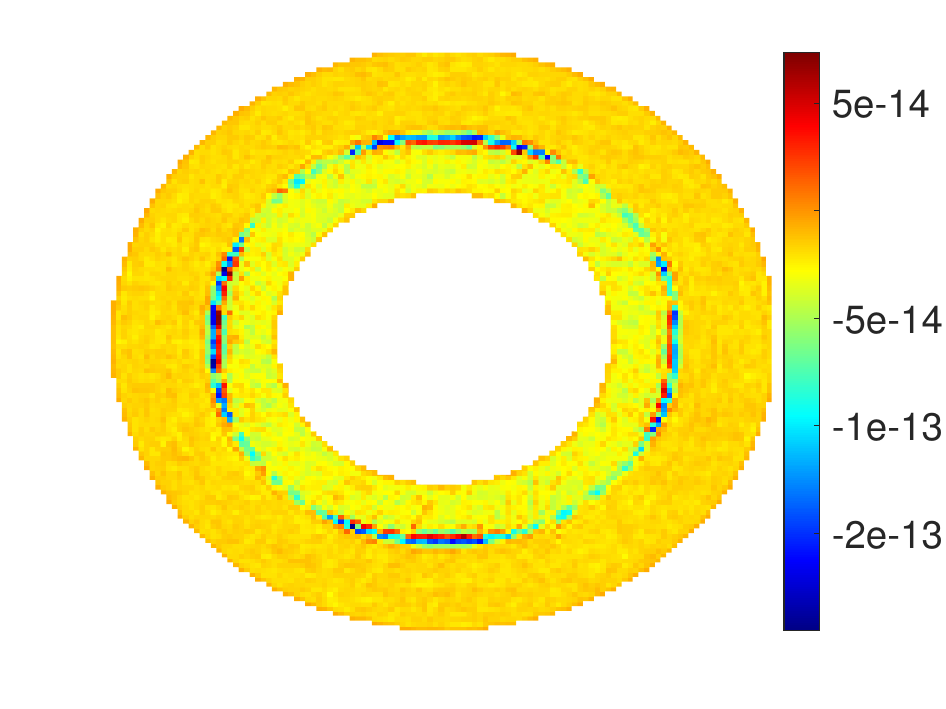}}
	\caption{Example \ref{ex:WB2D}: Keplerian disk with a discontinuity. The density perturbation with $N_x=N_y=120$ at $T=2$.}
   \label{figWB2D2}
\end{figure}

\begin{Ex}
\textbf{(Two-dimensional Riemann problem.)}
\label{ex:Riemann2D}
\end{Ex}

Here, we consider a Riemann problem on the above Keplerian disk \cite{gaburro2018well}.
The computational domain is $\Omega=\{(x,y):\ 1\le r\le 4\}$, and the initial data is given by
$$
(\rho,u,v,p)=\left\{\begin{array}{ll}
	(1,\ -(\sin\varphi)/\sqrt r,\ (\cos\varphi)/\sqrt r,\ 1),&r<2.5,
	 \\(0.1,\ -(\sin\varphi)/\sqrt r,\ (\cos\varphi)/\sqrt r,\ 0.1),&r\ge 2.5.
\end{array}\right.
$$
We use $N_x=N_y=500$ meshes to simulate this problem until $T=0.5$. The result of density, radial velocity $u_r=u\cos\varphi+v\sin\varphi$, and pressure are shown in Fig \ref{figRiemann2D}. Meanwhile, their 1D cuts along $\varphi=0$ are shown in Fig \ref{figRiemann2D2}. It can be seen that the structures of discontinuity are resolved well. We note that the non-ES scheme will blow up in the first several steps.

\begin{figure}[htbp!]
	\centering
    \subfigure[Density.]{
		\includegraphics[width=0.31\linewidth]{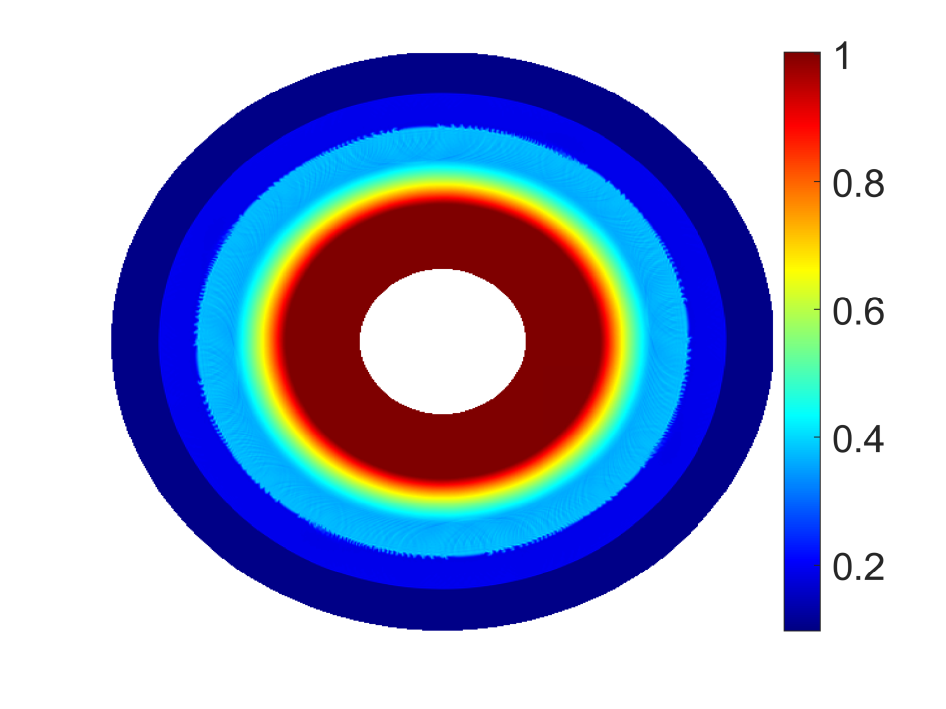}}
	\subfigure[$u_r=u\cos\varphi+v\sin\varphi.$]{
		\includegraphics[width=0.31\linewidth]{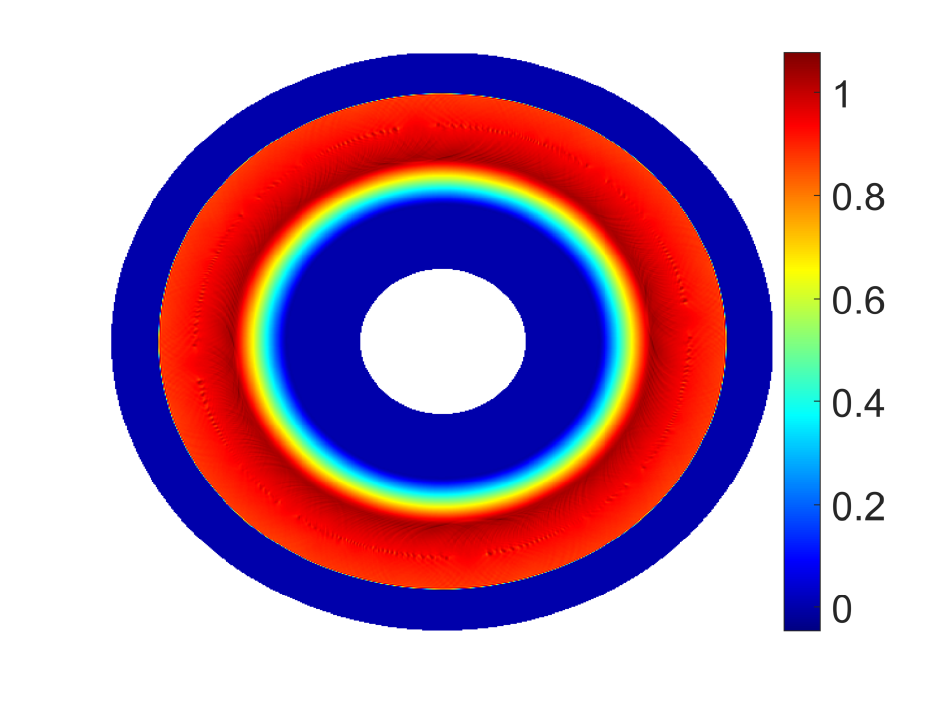}}
    \subfigure[Pressure.]{
		\includegraphics[width=0.31\linewidth]{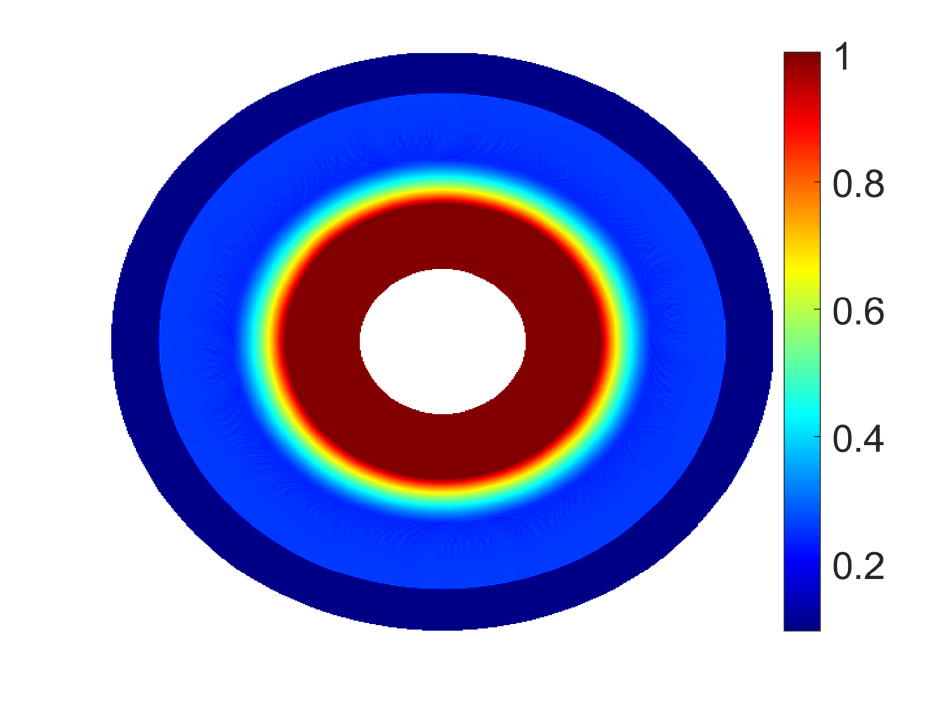}}
	\caption{Example \ref{ex:Riemann2D}: Two-dimensional Riemann problem on a Keplerian disk. The numerical solution at $T=0.5$ on $N_x=N_y=500$ meshes.}
   \label{figRiemann2D}
\end{figure}

\begin{figure}[htbp!]
	\centering
    \subfigure[Density.]{
		\includegraphics[width=0.31\linewidth]{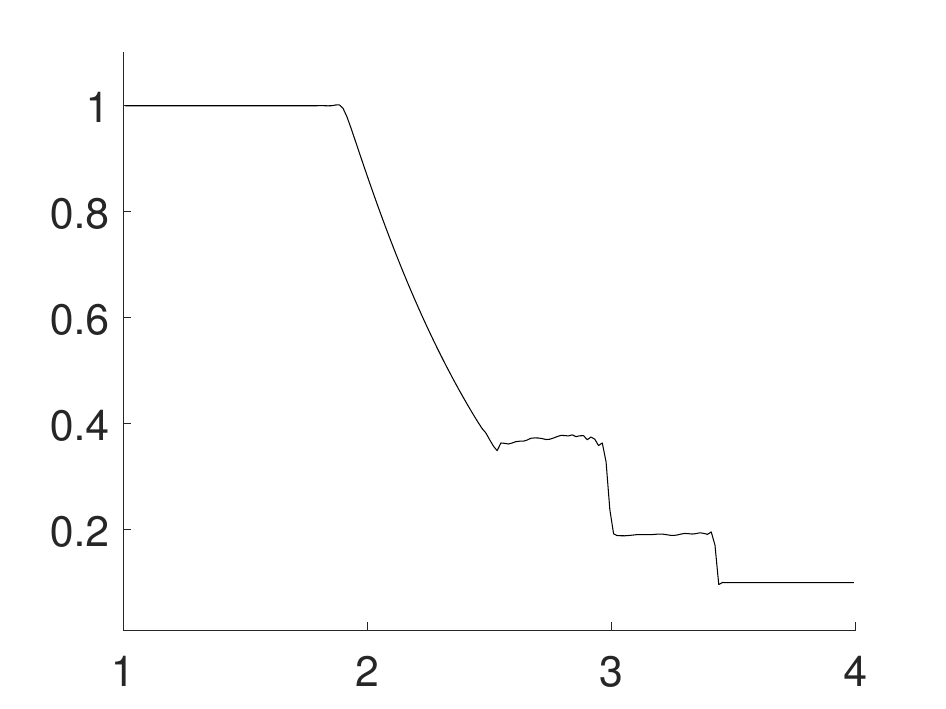}}
	\subfigure[$u_r=u\cos\varphi+v\sin\varphi.$]{
		\includegraphics[width=0.31\linewidth]{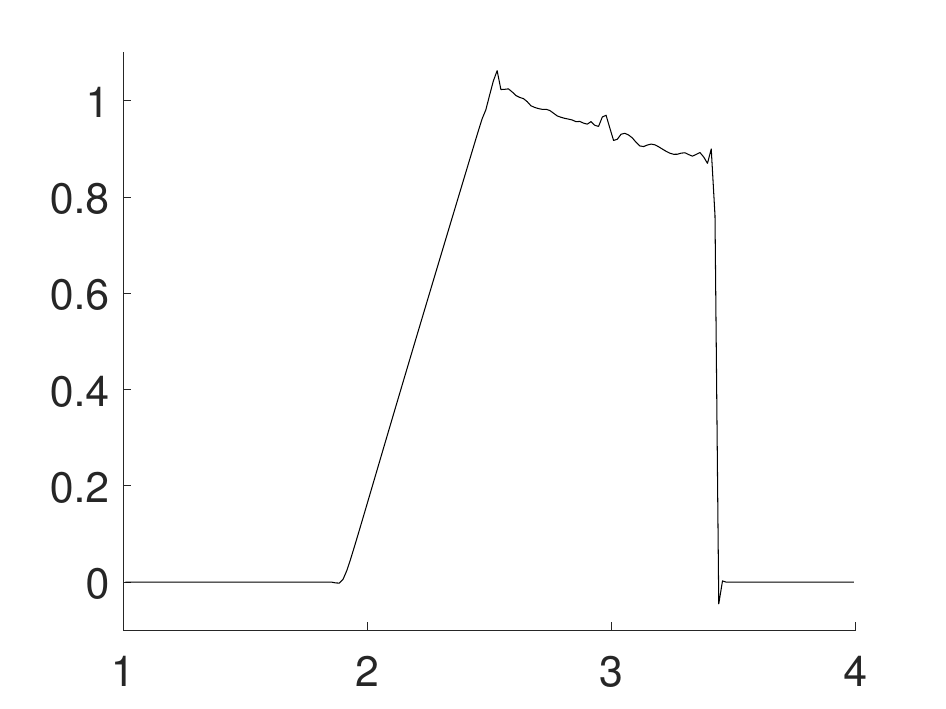}}
    \subfigure[Pressure.]{
		\includegraphics[width=0.31\linewidth]{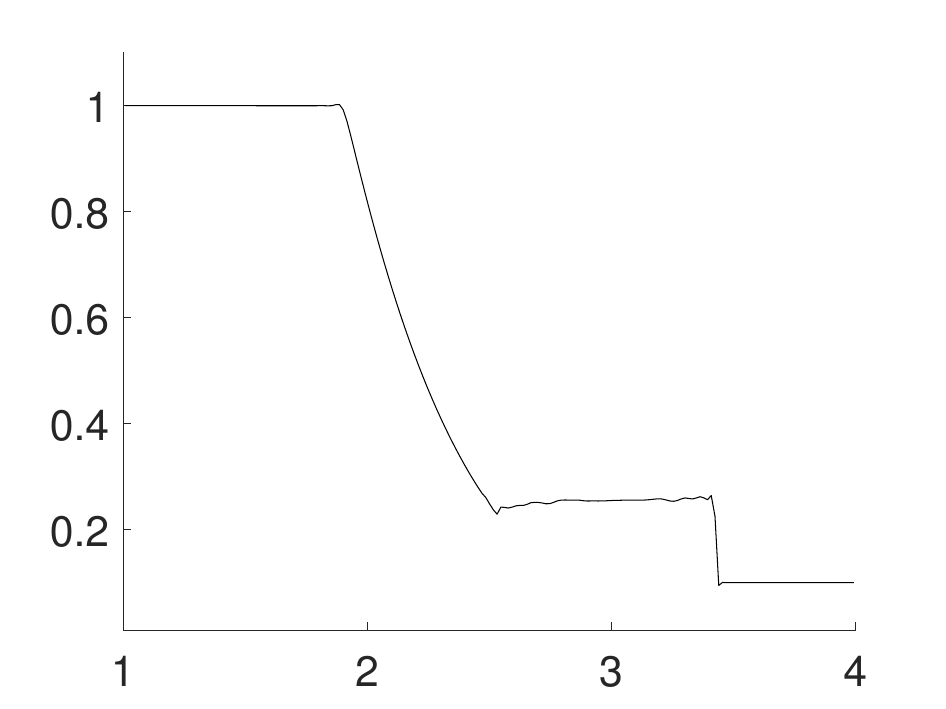}}
	\caption{Example \ref{ex:Riemann2D}: Two-dimensional Riemann problem on a Keplerian disk. The 1D cuts at $\varphi = 0$ on $N_x=N_y=500$ meshes.}
   \label{figRiemann2D2}
\end{figure}

\begin{Ex}
\textbf{(Kelvin–Helmholtz instability.)}
\label{ex:KH}
\end{Ex}

Here, we consider the Kelvin-Helmholtz instability on the above Keplerian disk, which is first studied in \cite{gaburro2018well}. For this example, the equilibrium state is set to
$$
\rho ^e=1+A_1\tanh \left( \frac{r-r_c}{\sigma _1} \right) ,\quad u^e=-\sqrt{\frac{1}{r}}\sin \varphi ,\quad v^e=\sqrt{\frac{1}{r}}\cos \varphi ,\quad p^e=1,
$$
and the initial data is the above equilibrium state with small perturbation
$$
\delta \rho =\Delta_0 ,\quad  
\delta u=\Delta_0 \cos \varphi ,\quad  
\delta v=\Delta_0\sin \varphi ,\quad  
\delta p=\Delta_0,
$$ 
where
$$ \Delta_0 =A_2\sin(\kappa\varphi)\exp\left\{ -\frac{(r-r_c)^2}{\sigma_2^2} \right\}.
$$
We set $A_1=0.25,\ A_2=0.5,\ \sigma_1=0.1,\ \sigma_2=0.005,\ r_c=1.5,\ \kappa=8$. We use a mesh with $N_x=N_y=240$ to simulate this instability until $T=17.5$. The results of density at $T=2.5,\ T=10,\ T=17.5$ are shown in Fig \ref{figKH}, it can be seen that the small structures are resolved well. This example demonstrates the capability of our scheme to maintain moving equilibrium solutions in astrophysical
simulations while accurately capturing small-scale physical instabilities.

\begin{figure}[htbp!]
	\centering
    \subfigure[$T = 2.5.$]{
		\includegraphics[width=0.31\linewidth]{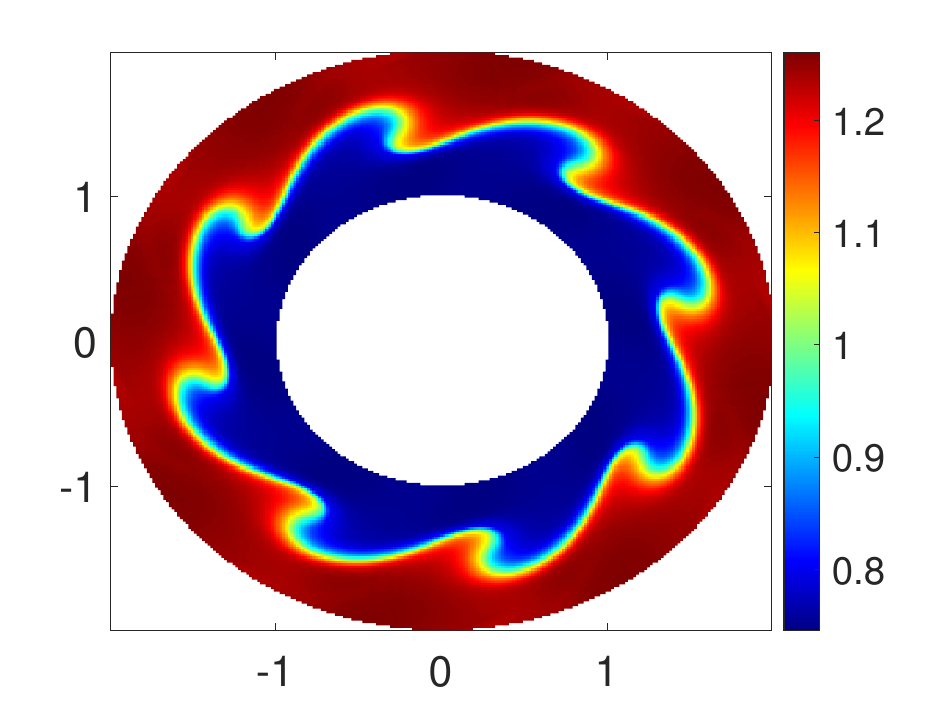}}
	\subfigure[$T = 10.$]{
		\includegraphics[width=0.31\linewidth]{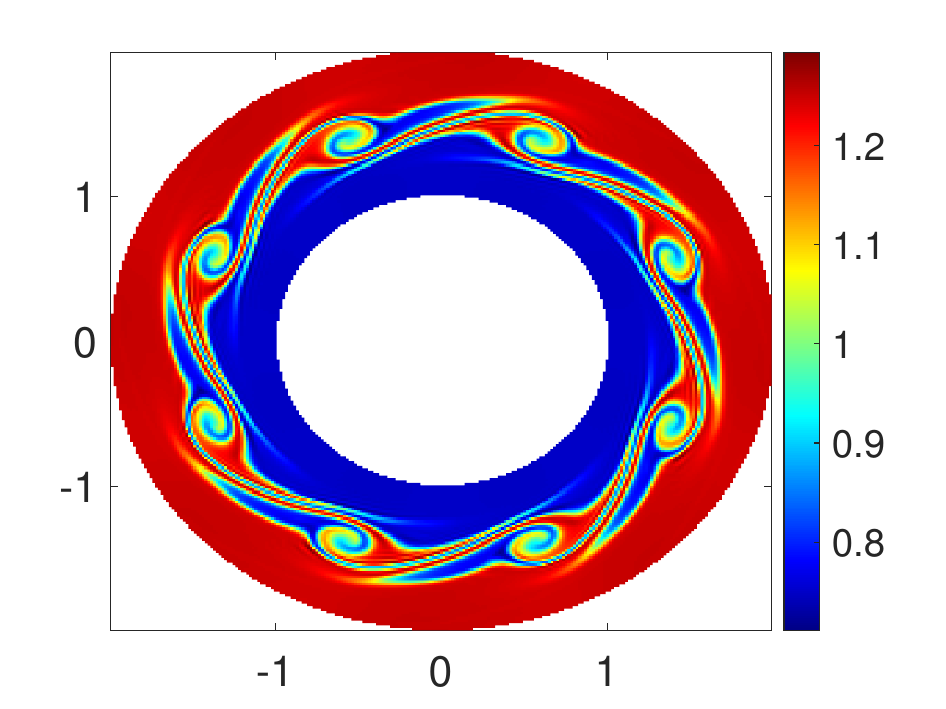}}
    \subfigure[$T = 17.5.$]{
		\includegraphics[width=0.31\linewidth]{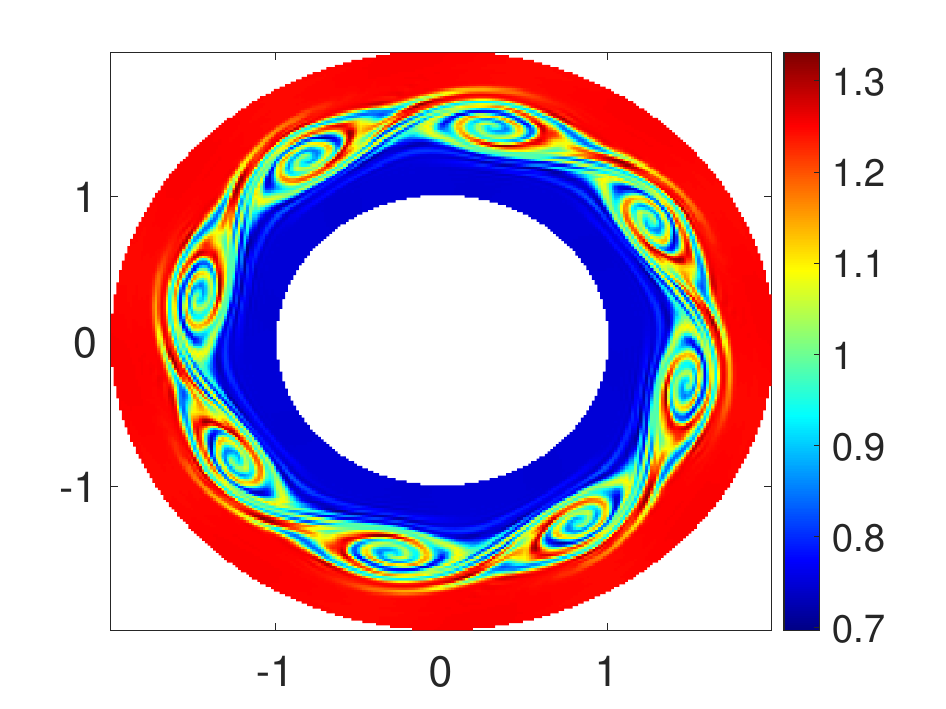}}
	\caption{Example \ref{ex:KH}: Kelvin–Helmholtz instability on a Keplerian disk. The numerical solution of density with $N_x=N_y=240$.}
   \label{figKH}
\end{figure}

\begin{Ex}
\textbf{(Double rarefaction wave.)}
\label{ex:DRF2D}
\end{Ex}

Finally, we examine the two-dimensional double rarefaction test case from \cite{jiang2022positivity}, which serves to validate the PP property of numerical schemes. We consider the computational domain $\Omega=[-0.5,0.5]\times[-0.5,0.5]$ with outflow boundaries. The gravitational potential is set as $\phi=0.5(x^2+y^2)$, and the initial conditions are specified as follows:
$$
\rho =\exp \left( -\phi \left( x,y \right) /0.4 \right) ,\quad p=0.4\exp \left( -\phi \left( x,y \right) /0.4 \right), 
$$
$$
u=\left\{\begin{array}{rr}
	-2,&x\le 0,
	 \\2,&x>0,
\end{array}\right. \quad v=0.
$$
The simulation is conducted until $T=0.1$ on $N_x\times N_y=200\times 200$ meshes, with results presented in Fig \ref{figDRF2D}. The numerical results demonstrate that the proposed WBESPP scheme maintains both stability and accuracy throughout the computation. It should be noted that the conventional non-PP scheme exhibits numerical instability and will blow up in the first several steps.

\begin{figure}[htbp!]
	\centering
    \subfigure[Density.]{
		\includegraphics[width=0.31\linewidth]{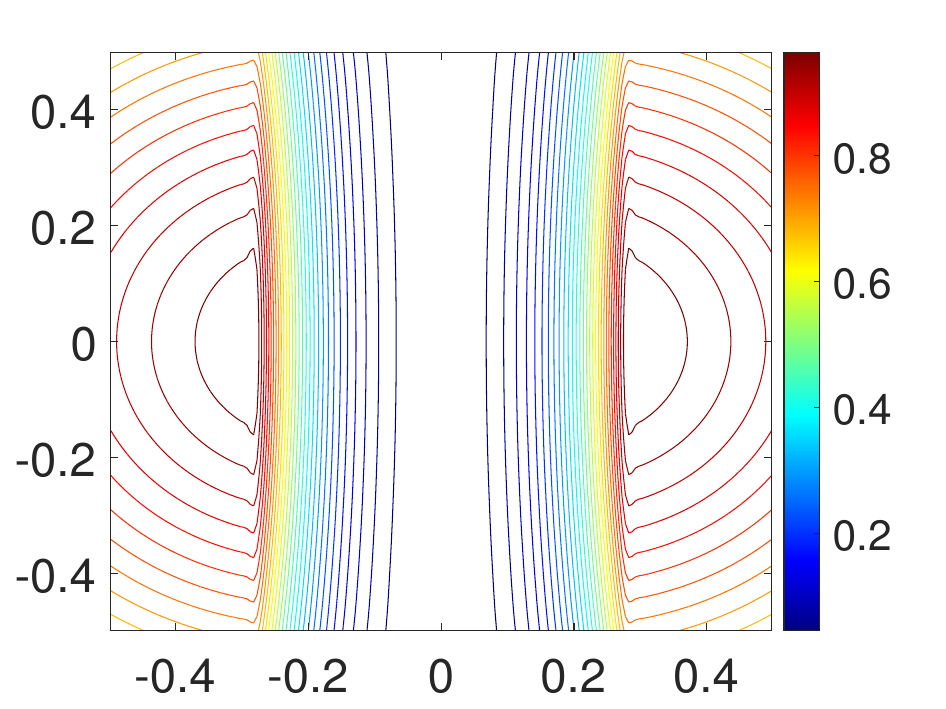}}
	\subfigure[$m=\rho u$.]{
		\includegraphics[width=0.31\linewidth]{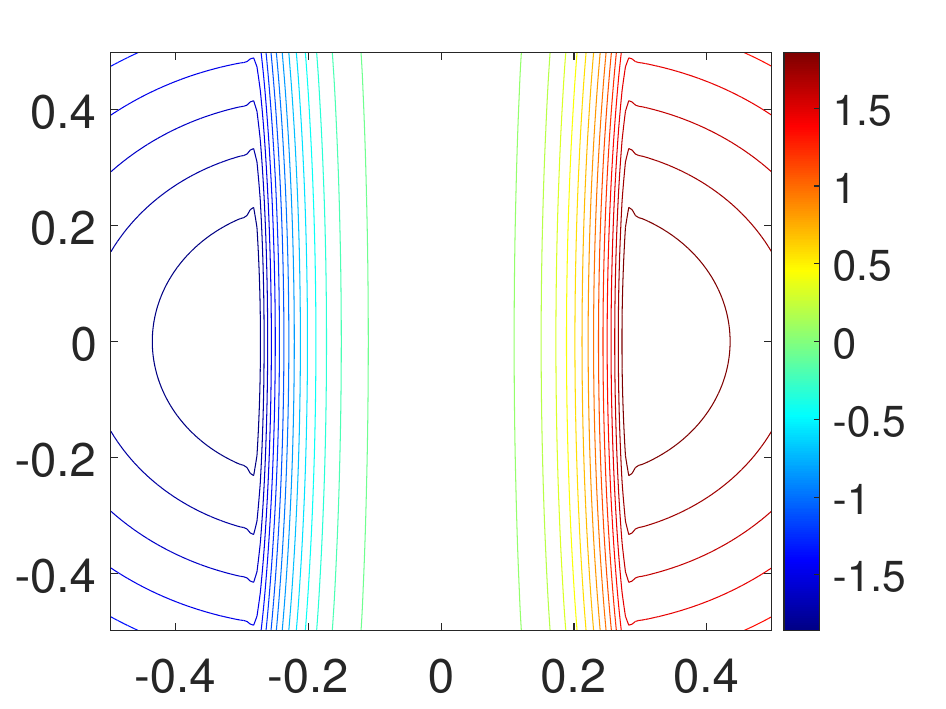}}
    \subfigure[Pressure.]{
		\includegraphics[width=0.31\linewidth]{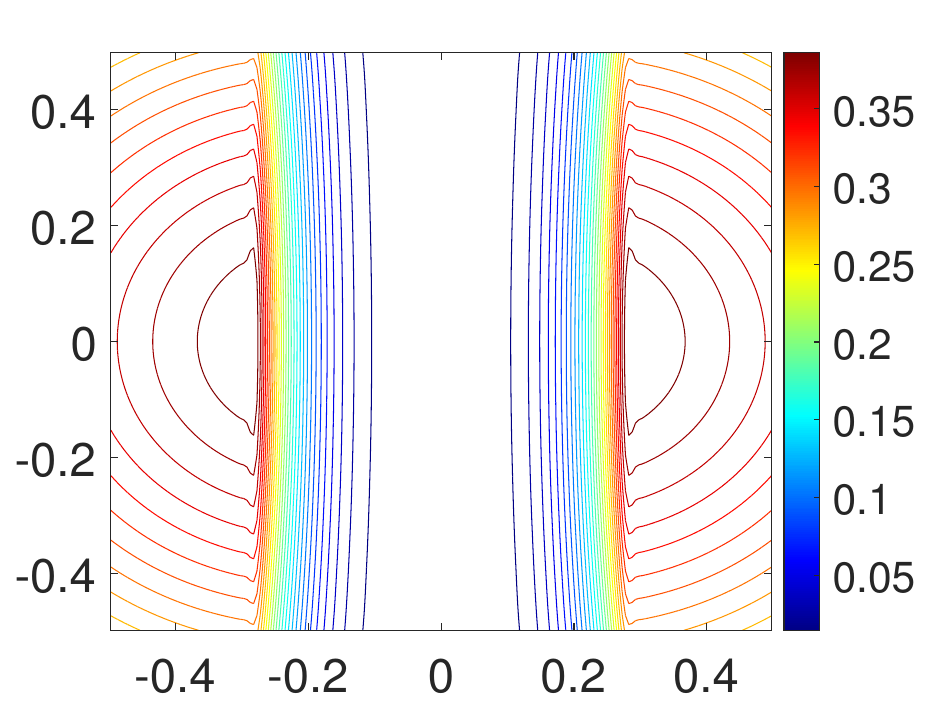}}
	\caption{Example \ref{ex:DRF2D}: Two-dimensional double rarefaction wave problem. The numerical solution with $N_x=N_y=200$ at $T=0.1$.}
   \label{figDRF2D}
\end{figure}

\section{Concluding remarks}\label{sec6}

In this paper, we have developed a novel structure-preserving nodal DG method for the Euler equations with gravity. The proposed scheme simultaneously achieves three critical properties: well-balancedness for general equilibrium states, entropy stability, and positivity-preservation. Through rigorous theoretical analysis, we have proven that the method maintains these properties while achieving high-order accuracy for smooth solutions. Extensive numerical experiments have demonstrated the scheme's effectiveness in handling complex flow features while preserving the desired physical properties. The significance of this work lies in its unified treatment of multiple physical constraints without compromising accuracy, making it particularly suitable for long-time simulations in astrophysics and atmospheric applications. Future work could focus on extending this framework to unstructured meshes and other equations.

\bibliographystyle{abbrv}
\bibliography{tex}

\end{document}